\newcommand{\Z}{{\mathbb Z}}
\newcommand{\R}{{\mathbb R}}
\newcommand{\Q}{{\mathbb Q}}
\newcommand{\T}{{\mathbb T}}
\newcommand{\N}{{\mathbb N}}
\newcommand{\G}{{\mathbb G}}
\newcommand{\CC}{{\mathcal C}}
\newcommand{\CI}{{\mathcal I}}
\newcommand{\bu}{{\mathbf u}}
\newcommand{\bx}{{\mathbf x}}
\newcommand{\by}{{\mathbf y}}
\newcommand{\inv}{^{-1}}
\newcommand{\zrat}{Z_{\text{\rm rat}}}
\newcommand{\rrat}{R_{\text{\rm rat}}}
\newcommand{\zab}{Z_{\text{\rm ab}}}
\newcommand{\rab}{R_{\text{\rm ab}}}
\newcommand{\riso}{R_{\text{\rm iso}}}
\newcommand{\ziso}{Z_{\text{\rm iso}}}
\newcommand{\risoc}{R_{\text{\rm isoc}}}
\newcommand{\zisoc}{Z_{\text{\rm isoc}}}
\DeclareMathOperator{\range}{range}
\DeclareMathOperator{\torsion}{Torsion}
\DeclareMathOperator{\diameter}{diameter}
\DeclareMathOperator{\id}{id}
\DeclareMathOperator{\tor}{Tor}
\DeclareMathOperator{\ext}{Ext}
\renewcommand{\hom}{\mathop{\rm Hom}\nolimits}
\DeclareMathOperator{\SL}{SL}
\DeclareMathOperator{\gab}{G_\text{ab}}
\newtheorem{theorem}{Theorem}[section]
\newtheorem{lemma}[theorem]{Lemma}
\newtheorem{proposition}[theorem]{Proposition}
\newtheorem{corollary}[theorem]{Corollary}
\newtheorem{fact}[theorem]{Fact}
\theoremstyle{remark}
\newtheorem{remark}[theorem]{Remark}
\newtheorem*{comments}{Comments}
\theoremstyle{definition}
\newtheorem{definition}[theorem]{Definition}
\begin{document}

\title[Extensions and dimension groups]{Extensions of Cantor minimal systems and  dimension groups}
%%%
%%% The word ``cohomology`` in the title sounds a bit too snob for me

%\title[Extensions and cohomology]
%{Extensions of Cantor minimal systems and  their cohomology}

\author{Eli Glasner and Bernard Host} 

\address{Department of Mathematics,
Tel-Aviv University, Ramat Aviv, Israel}
\email{glasner@math.tau.ac.il}

\address{\'Equipe d'analyse et de math\'{e}matiques appliqu\'{e}es,
Universit\'{e} de Paris-Est Marne-la-Vall\'{e}e, 77454 Marne-la-Vall\'ee Cedex 2, France}
\email{bernard.host@univ-mlv.fr}

\date{\today}

\keywords{Cantor minimal systems, dimension groups, torsion, 
cohomology, finite simple groups}

\thanks{The first author thanks the Israel Science Foundation (grant
number 4699). The second author was partially founded by the Institut universitaire de France.}

\thanks{{\em 2000 Mathematical Subject Classification  54H20, 
37B05, 19D55}}

\begin{abstract}
Given a factor map $p\colon(X,T)\to(Y,S)$ of Cantor minimal systems, we study the relations between the dimension groups of the two systems.

First, we interpret the torsion subgroup of the quotient of the dimension groups
$K_0(X)/K_0(Y)$ in terms of intermediate extensions which are extensions of 
$(Y,S)$ by a compact abelian group.
Then we show that, by contrast, the existence of an intermediate non-abelian 
finite group extension can produce a situation where the dimension group of 
$(Y,S)$ embeds into a proper  subgroup of the dimension group of $(X,T)$,
yet the quotient of the dimension groups is nonetheless torsion free.

Next we define higher order cohomology groups $H^n(X \mid Y)$
associated to an extension, and study them in various cases (proximal extensions, extensions by, 
not necessarily abelian, finite groups, etc.).
Our main result here is that all the cohomology groups $H^n(X \mid Y)$
are torsion groups. As a consequence we can now identify $H^0(X \mid Y)$
as the torsion group of $ K_0(X)/K_0(Y)$.
\end{abstract}

\maketitle

\tableofcontents

\section*{Introduction}
In this work a {\em dynamical system} is a pair $(X,T)$ where $X$ is a compact metric space and $T \colon X \to X$ is a  homeomorphism. 
If $X$ is a Cantor space we say that $(X,T)$ is a \emph{Cantor system}. If the orbit $\{T^nx\colon n\in\Z\}$ of every point $x\in X$ is dense, we say that $(X,T)$ is \emph{minimal}.
 For the definition of the dimension group which is associated to
a minimal Cantor system and for more details on this subject we refer the reader to the papers \cite{HPS}, \cite{GPS},
\cite{GW},  \cite{O} and \cite{DHS}.
For further developments see \cite{GPS1} and \cite{S}.

A \emph{factor map}   
$\pi\colon (X,T) \to (Y,S)$ 
is a surjective continuous map $\pi\colon X \to Y$ such that $\pi\circ T = S\circ\pi$. 
In this case we say that $(Y,S)$ is a factor of $(X,T)$. Depending on the point of view, we say also that $(X,T)$ is an extension of $(Y,S)$ and then the map $\pi$ is called an \emph{extension}.

Given a factor map $p\colon(X,T)\to(Y,S)$ of Cantor minimal systems, our
goal is to study the relations between the dimension groups of the two systems. 
Following ideas and methods developed by 
Herman, Putnam and Skau \cite{HPS}, Giordano Putnam and Skau \cite{GPS},
and Glasner and Weiss \cite{GW}, we show in Sections \ref{sec:abelian}, 
\ref{proof} (and Appendix~\ref{sec:appendix})
that an extension of minimal dynamical systems $p\colon (X,T) \to (Y,S)$ yields a nontrivial torsion in the quotient group $K_0(X,T)  / $ $p^*K_0(Y,S)$ of the corresponding dimension groups if and only if  there is a nontrivial intermediate cyclic group extension $X \to Z \to Y$.
More generally, we interpret the torsion subgroup of the quotient of the dimension groups in terms of intermediate extensions which are extensions of $(Y,S)$ by a totally disconnected compact abelian group.
In section \ref{NA}  and Appendix~\ref{app:B} we show that, by contrast, the existence of an intermediate non-abelian finite group extension can produce a
situation where $p^*K_0(Y,S)\subsetneq K_0(X,T)$
yet $K_0(X,T) / p^*K_0(Y,S)$ is nevertheless torsion free.

Next (in Section \ref{sec:cohomology}) we define higher order cohomology groups $H^n(X \mid Y)$
associated to an extension, and study them in various cases (proximal extensions, extensions by, not necessarily abelian, finite groups, etc.).
Our main result here (Section \ref{sec:torsion}) is that all the cohomology groups $H^n(X \mid Y)$
are torsion groups. As a consequence we can now identify $H^0(X \mid Y)$
as the torsion group $\torsion \bigl( K_0(X)/K_0(Y) \bigr)$. 
%%%
As an example, in Appendix~\ref{ap:morse} we compute some of these groups in the case of the Morse dynamical system.

Most of the homological algebra we utilize is surely well known and classical. However, because we are using it in the special context
of the spaces $\CC(X)=\CC(X,\Z)$ where $X$ is a Cantor set,
finding an exact reference in the literature is not an easy task. We instead
provide mostly complete proofs. This practice serves two 
purposes. First, it saves us the need to search for exact sources,
and second, hopefully, it will enable those readers who lack the necessary algebraic background to read the paper with relative ease.

We thank Christian Skau for his stimulating questions. 
We also thank Benjy Weiss and Avinoam Mann for their contributions to this work.

\section{Notation and preliminaries}

\subsection{The dimension group $K_0(X,T)$}
If $X$ is a Cantor space, we write $\CC(X)$ for the additive group of continuous functions on $X$ with integer values. If $X$ and $Y$ are Cantor spaces and $p\colon X\to Y$ a continuous map,
 we write $p^*\colon \CC(Y)\to\CC(X)$ the map $f\mapsto f\circ p$. This map is a group homomorphism and is 
one-to-one if $p$ is onto.
 
A \emph{Cantor dynamical system} $(X,T)$ is a Cantor set $X$ endowed with a homeomorphism
$T\colon X\to X$. 

As above a \emph{factor map} (or an {\em extension}) from a  Cantor dynamical system $(X,T)$ to a Cantor dynamical system $(Y,S)$  is a map $p\colon X\to Y$, continuous and onto, 
with $S\circ p=p\circ T$.

Let $(X,T)$ be Cantor dynamical system.
The \emph{coboundary map} $\beta\colon
\CC(X)\to\CC(X)$ is defined by 
$\beta=T^*-\id$, that is
$$
\beta f=f\circ T-f\text{ for every }f\in\CC(X)\ .
$$
A function $f\in\CC(X)$ is a \emph{coboundary} if it belongs the range $\beta\CC(X)$ of the coboundary map.
We define
$K_0(X,T)$ to be the quotient group
$$
K_0(X,T):= \frac {\CC(X)}{\beta\CC(X)}\ .
$$
For $f\in\CC(X)$ we write $[f]$ for its image in $K_0(X,T)$.

The group $K_0(X,T)$ can be given the order induced by the natural order of $\CC(X)$ but we do not use this order here.

The next Lemma is classical~\cite{HPS}.
\begin{lemma}
\label{lem:K0torsionfree}
Let $(X,T)$ be Cantor dynamical system. Then $K_0(X,T)$ is torsion-free.
\end{lemma}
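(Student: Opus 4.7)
\smallskip

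\noindent\textbf{Proof plan.} The plan is to show directly that if $n[f]=0$ in $K_0(X,T)$ for some positive integer $n$, then $[f]=0$. So I assume there exist $f\in\CC(X)$ and $g\in\CC(X)$ with
$$
n f = g\circ T - g,
$$
and my goal is to exhibit some $h\in\CC(X)$ with $f = h\circ T-h$.

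The key idea is to do Euclidean division \emph{inside} the ring $\CC(X,\Z)$. Since $g$ is continuous and $\Z$-valued on a zero-dimensional space, I can split it pointwise as $g = n\,h + r$, where $h(x):=\lfloor g(x)/n\rfloor$ and $r(x):=g(x)\bmod n$. Both $h$ and $r$ are continuous and integer valued (the maps $\Z\to\Z$ given by $m\mapsto\lfloor m/n\rfloor$ and $m\mapsto m\bmod n$ are locally constant, hence continuous, and $g$ is continuous), and moreover $r$ takes its values in $\{0,1,\ldots,n-1\}$. Substituting this decomposition into $nf=g\circ T-g$ and rearranging gives
$$
n\bigl(f-(h\circ T-h)\bigr) \;=\; r\circ T-r.
$$

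Now I exploit the size constraint on $r$. The right-hand side is pointwise a difference of two elements of $\{0,\ldots,n-1\}$, hence lies in $\{-(n-1),\ldots,n-1\}$. The left-hand side, by construction, is pointwise a multiple of $n$. The only multiple of $n$ in that interval is $0$, so $r\circ T=r$ identically, and consequently $f=h\circ T-h$. Thus $[f]=0$ in $K_0(X,T)$, which is exactly what needs to be proved.

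There is really no hard step here: once one has the idea of dividing $g$ by $n$ within $\CC(X,\Z)$, everything else is forced. The only point that deserves a moment's attention is the continuity of $h$ and $r$, which relies on $X$ being totally disconnected so that integer-valued continuous functions are locally constant and the componentwise operations $\lfloor\cdot/n\rfloor$ and $\cdot\bmod n$ preserve continuity. Minimality plays no role at all in this argument; the statement holds for arbitrary Cantor dynamical systems.
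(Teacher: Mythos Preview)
Your argument is correct and is essentially identical to the paper's own proof: the paper also writes $g=kh+q$ with $0\le q<k$, deduces $q\circ T-q=k(f-h\circ T+h)$, and uses the bound $-k<q\circ T-q<k$ to force $q\circ T-q=0$. Your write-up is slightly more explicit about why the quotient and remainder remain continuous, but the route is the same.
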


\begin{proof} 
Let $\alpha\in K_0(X,T)$ and let $k>0$ be an integer with $k\alpha=0$. We choose $f\in\CC(X)$ with $[f]=\alpha$. Then $[kf]=0$ and there exists $g\in\CC(X,Z)$ with $kf=g\circ T-g$. We write
$$
g=kh+q\text{ with }h,q\in\CC(X)\text{ and }0\leq q<k\ .
$$
We get $q\circ T-q=k(f-h\circ T+h)$ and $-k< q\circ T-q<k$ and thus $q\circ T-q=0$.
We conclude that $f=h\circ T-h$.
\end{proof}

\subsection{Dimension groups and factors}

Let $p\colon(X,T)\to(Y,S)$ be a factor map between Cantor dynamical systems. The homomorphism $p^*\colon \CC(Y)\to\CC(X)$ satisfies 
$\beta\circ p^*=p^*\circ\beta$ and thus induces a group homomorphism, written $p^*$ also, from $K_0(Y,S)$ to $K_0(X,T)$. 
Proposition 3.1 of \cite{GW} shows that this is a natural embedding of 
$K_0(Y,S)$ into $K_0(X,T)$ which is order and order unit preserving.
For completeness we provide a proof that $p^*$ is  
one-to-one.

\begin{lemma} 
\label{lem:onetoone}
Let  $p\colon(X,T)\to(Y,S)$ be a factor map between Cantor minimal dynamical systems.
Then  $p^*\colon K_0(Y,S) \to K_0(X,T)$ is  
one-to-one.
\end{lemma}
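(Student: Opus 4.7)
The plan is to apply the Gottschalk--Hedlund theorem on the minimal system $(Y,S)$ after a short telescoping step. Suppose $f\in\CC(Y)$ represents a class with $p^*[f]=0$, so we can write $f\circ p=g\circ T-g$ for some $g\in\CC(X)$. For any $y\in Y$, pick a lift $x\in p\inv(y)$; using $p\circ T=S\circ p$, telescoping gives
$$
\sum_{k=0}^{n-1} f(S^k y)=\sum_{k=0}^{n-1}(g\circ T-g)(T^k x)=g(T^n x)-g(x)
$$
for every $n\geq 1$. Since $g\in\CC(X)$ is bounded, the Birkhoff sums of $f$ over $(Y,S)$ are uniformly bounded by $2\|g\|_\infty$, independently of $y$ and $n$.

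Next I would invoke Gottschalk--Hedlund for the minimal system $(Y,S)$: the uniform boundedness of the Birkhoff sums of the continuous function $f$ produces $h\in C(Y,\R)$ such that $f=h\circ S-h$. This reduces the problem to an integrality issue.

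The main obstacle is to promote $h$ to an integer-valued function, since $K_0(Y,S)$ is defined through $\CC(Y)=C(Y,\Z)$. Fix any $y_0\in Y$. For every integer $n$ we have $h(S^n y_0)-h(y_0)=\sum_{k=0}^{n-1}f(S^k y_0)\in\Z$, so $h$ sends the orbit of $y_0$ into the coset $h(y_0)+\Z$; by density of the orbit (minimality of $(Y,S)$, which is inherited from $(X,T)$ via $p$) and continuity of $h$, the image $h(Y)$ is contained in $h(y_0)+\Z$. Since this set is discrete in $\R$ and $Y$ is compact, $h(Y)$ is finite and each level set is clopen, hence $h-h(y_0)\in\CC(Y)$ is an integer-valued witness that $[f]=0$ in $K_0(Y,S)$. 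One could avoid citing Gottschalk--Hedlund by trying to define $h$ directly by $h(S^n y_0):=g(T^n x_0)-g(x_0)$ for a chosen lift $x_0\in p\inv(y_0)$ and extending by continuity; but the real difficulty then is precisely to verify that the extension is well defined on limit points of the orbit, which is what the bounded-orbit construction inside Gottschalk--Hedlund packages for us.
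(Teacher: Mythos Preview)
Your proof is correct and follows exactly the paper's approach: telescope to bound the Birkhoff sums of $f$ uniformly by $2\Vert g\Vert_\infty$, then invoke Gottschalk--Hedlund on the minimal system $(Y,S)$. The paper simply cites~\cite{GH} to conclude that $f$ is a coboundary in $\CC(Y)$, whereas you additionally spell out the passage from a real-valued transfer function to an integer-valued one---a detail the paper leaves implicit.
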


\begin{proof}
Let $\alpha\in K_0(Y)$ be such that $p^*\alpha=0$ and let $f\in\CC(Y)$ be such that $[f]=\alpha$. Then $[f\circ p]=p^*\alpha=0$, that is, $f\circ p$ is a coboundary of the system $(X,T)$ and there exists $g\in\CC(X)$ with $f\circ p=g\circ T-g$. For every $n>0$ and every $y\in Y$, choosing $x\in X$ with $p(x)=y$ we have
$$
f^{(n)}(y):=\sum_{j=0}^{n-1}f(S^jy)=\sum_{j=0}^{n-1}f\circ p(T^jx)=g(T^nx)-g(x)
$$
and thus the family of functions $\bigl(f^{(n)}\colon n>0\bigr)$ is uniformly bounded by $2\Vert g\Vert_\infty$. By~\cite{GH}, $f$ is a coboundary of the system $(Y,S)$. We conclude that $\alpha=0$.
\end{proof}

\section{Abelian intermediate extensions}
\label{sec:abelian}

Here $p\colon(X,T)\to(Y,S)$ is a factor map between minimal Cantor dynamical systems.

\subsection{Intermediate extensions}
An \emph{intermediate extension}   $(X,T)\overset{r}{\rightarrow} (Z,R)\overset{q}{\rightarrow}(Y,S)$ consists of a Cantor system 
$(Z,R)$ and two factor maps $q\colon(Z,R)\to(Y,S)$ and $r\colon(X,T)\to(Z,R)$ with  $q\circ r=p$.

Let  $(X,T)\overset{r}{\rightarrow} (Z,R)\overset{q}{\rightarrow}(Y,S)$ and 
$(X,T)\overset{r'}{\rightarrow} (Z',R')\overset{q'}{\rightarrow}(Y,S)$ be two intermediate extensions. We say that the first one is \emph{over} (or \emph{greater than}) the second one if there exists a factor map $\phi\colon (Z,R)\to(Z',R')$ with $\phi\circ r=r'$. 
This implies that $q'\circ\phi=q$ and we have the commutative   diagram:
$$
\xymatrix{
(X,T)\ar[rr]^p \ar[rd]^r\ar[rdd]_{r'} & & (Y,S) \\
& (Z,R) \ar[ru]^q \ar[d]^\phi & \\
& (Z',R') \ar[ruu]_{q'} & 
}
$$

We remark that the factor map $\phi$ with these properties is then unique. We say that the two intermediate extensions are \emph{isomorphic} if the factor map $\phi$ is an isomorphism. We identify isomorphic intermediate extensions. The relation ``being over'' defined above is then a
partial order on the family of intermediate extensions.
 
Every family of intermediate extensions admits a \emph{supremum} for this order; that is, an intermediate extension which is larger than every intermediate extension of the family, and smaller than every other intermediate extension with this property. We give the construction  in the case of a family of two intermediate extensions, the general is is similar.
 
Let  $(X,T)\overset{r}{\rightarrow} (Z,R)\overset{q}{\rightarrow}(Y,S)$ and 
$(X,T)\overset{r'}{\rightarrow} (Z',R')\overset{q'}{\rightarrow}(Y,S)$ be two intermediate extensions. Let $Z''=\{ (r(x),r'(x))\colon x\in X\}\subset Z\times Z'$. Then $Z''$ is a closed subset of $Z\times Z'$ and thus is compact. The map $R''=R\times R'$ is a homeomorphism of $Z''$ onto itself and the map $r''=(r,r')\colon X\to Z''$ is a factor map from $(X,T)$ to $(Z'',R'')$. We define a map $q''\colon Z''\to Y$ by $q''(z,z')=q(z)=q'(z')$. This map is clearly a factor map. We have constructed an intermediate extension
$(X,T)\overset{r''}{\rightarrow} (Z'',R'')\overset{q''}{\rightarrow}(Y,S)$ satisfying all the announced properties.
Sometimes we denote this extension by $Z \vee Z'$.

\subsection{Intermediate extensions by compact abelian groups}
\label{subsec:inter_ab}
We say that 
$(X,T)\to (Z,R)\overset{q}{\rightarrow}(Y,S)$ is an  \emph{intermediate extension by a compact group $G$} if 
$G$ is a compact subgroup of the Polish group ${\rm {Homeo}}(X)$
which commutes with $T$ and the extension $q$ is isomorphic to the quotient map $X \to X/G$.

We consider here the case that $G$ is a finite or Cantor  abelian group. 

If an intermediate extension is smaller than an intermediate extension by a compact abelian group 
$G$,   then it is itself an intermediate extension by a compact abelian group $H$, which is a quotient of the group $G$. In particular, if $G$ is finite or Cantor, then $H$ is finite or Cantor too.
It is easy to see that the supremum of any family of extensions by compact abelian groups is also an extension by a compact abelian group. 
If all the groups of the family are finite or Cantor, then the supremum is finite or Cantor too.

\subsection{Intermediate extensions by finite and Cantor groups}

\begin{theorem}
\label{th:finite-abel}
Let $p\colon(X,T)\to(Y,S)$ be a factor map between Cantor dynamical systems. Then there exists a bijection between the family of finite subgroups of $K_0(X,T)/p^*K_0(Y,S)$ and the family of intermediate extensions $(X,T)\overset{r}{\rightarrow}(Z,R)\overset{q}{\rightarrow}(Y,S)$ where 
$q\colon (Z,R)\to(Y,S)$ is an extension by a finite abelian group.

This bijection maps each finite subgroup $G$ of  $K_0(X,T)/p^*K_0(Y,S)$ to an extension of $(Y,S)$ by the dual group $\widehat G$ of $G$.

Moreover, this bijection preserves the orders: If $G$ and $G'$ are finite subgroups  of 
$K_0(X,T)/p^*K_0(Y,S)$ with $G'\subset G$, then the intermediate extension associated to $G'$ is below the intermediate extension associated to $G$.
\end{theorem}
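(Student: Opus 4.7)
The plan is to construct explicit maps in both directions and verify they are mutual inverses; throughout let $\beta=T^*-\id$.

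\emph{From a subgroup to an extension.} Given a finite subgroup $G\subset K_0(X,T)/p^*K_0(Y,S)$, I would decompose $G\cong\bigoplus_{i=1}^k\Z/n_i\Z$ into invariant factors with generators $\alpha_i$ of order $n_i$, and pick $f_i\in\CC(X)$ representing $\alpha_i$. Since $n_i\alpha_i=0$ in the quotient, there exist $h_i\in\CC(Y)$ and $g_i\in\CC(X)$ with $n_if_i=h_i\circ p+\beta g_i$. Setting $A=\widehat G\cong\bigoplus_i\Z/n_i\Z$ and
$$
\tau=(g_i\bmod n_i)_i\colon X\to A,\qquad \sigma=(-h_i\bmod n_i)_i\colon Y\to A,
$$
one gets $\beta\tau=\sigma\circ p$ by reducing the defining relation mod $n_i$. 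The skew product $(Z,R)=Y\times_\sigma A$ (with $R(y,a)=(Sy,a+\sigma(y))$, $q(y,a)=y$) together with $r(x)=(p(x),\tau(x))$ is the desired intermediate extension.

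\emph{From an extension to a subgroup.} Writing an intermediate $A$-extension as $Z=Y\times_\sigma A$ and $r(x)=(p(x),\tau(x))$, let $N$ be the exponent of $A$. For $\chi\in\widehat A$, identified with a homomorphism $A\to\Z/N\Z$, lift $\chi\circ\tau$ and $\chi\circ\sigma$ to integer-valued $\tilde f\in\CC(X)$ and $\tilde h\in\CC(Y)$. The cocycle relation forces $\beta\tilde f-\tilde h\circ p\in N\,\CC(X)$; writing it as $N\mu$, I define $\Phi(\chi)=[\mu]\in K_0(X,T)/p^*K_0(Y,S)$. Standard checks show $\Phi$ is a well-defined group homomorphism whose image is annihilated by $N$, hence a finite subgroup.

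\emph{Mutual inversion and the main obstacle.} Starting from $G$ and computing $\Phi$ on the extension just built, for $\chi=\sum_i m_i e_i^*\in\widehat A$ (where $e_i^*$ are the dual generators) the natural choices $\tilde f=\sum_i (N/n_i)m_i g_i$, $\tilde h=-\sum_i (N/n_i)m_i h_i$ give
$$
\beta\tilde f-\tilde h\circ p=\sum_i (N/n_i)m_i(n_if_i-h_i\circ p)+\sum_i (N/n_i)m_i h_i\circ p=N\sum_i m_i f_i,
$$
so $\Phi(\chi)=\sum_i m_i\alpha_i$; under the canonical identification $\widehat{\widehat G}\cong G$ this is the identity, and in particular $\Phi\colon\widehat A\to G$ is a bijection. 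The opposite composition is the main obstacle: given a bona fide $A$-extension, one must show it is isomorphic to the one rebuilt from $G=\mathrm{Im}\,\Phi$. The bijectivity of $\Phi$ is the key lever: if $r(X)$ were a proper subset of $Y\times A$, then by minimality of $(Y,S)$ and a fiberwise-differences argument $r(X)$ would be a coset bundle for a proper subgroup $D\subsetneq A$, and any character $\chi$ vanishing on $D$ would satisfy $\chi\circ\tau=c\circ p$ for some $c\in\CC(Y,\Z/N\Z)$, giving $\Phi(\chi)=0$, contradicting injectivity. Hence $r$ is surjective, the group $A$ is intrinsically determined, and a comparison of the two cocycles modulo $\CC(Y,A)$-coboundaries finishes the identification up to an isomorphism of intermediate extensions.

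\emph{Order preservation.} If $G'\subset G$, the Pontryagin-dual surjection $\widehat G\twoheadrightarrow\widehat{G'}$ is equivariant for the two skew-product actions and induces a factor map $\phi\colon Z\to Z'$ satisfying $\phi\circ r=r'$ and $q'\circ\phi=q$, placing the extension associated to $G$ above the one associated to $G'$.
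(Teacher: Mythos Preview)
Your argument has a genuine gap in the ``opposite composition''. You invoke ``the bijectivity of $\Phi$'' as the key lever, but you have only established that $\Phi$ is bijective for the extension \emph{constructed} from $G$, via the explicit formula $\Phi(\chi)=\sum m_i\alpha_i$; for a \emph{given} intermediate $A$-extension you have no such formula. Moreover, for a given intermediate extension $r$ is surjective by hypothesis (it is a factor map onto $Z$), so your implication ``$r(X)$ proper $\Rightarrow\Phi$ has kernel'' is vacuous there. What is actually needed is the converse: if $\Phi(\chi)=0$ for a nontrivial $\chi$, then (using that $p^*$ is injective on $K_0$, Lemma~1.2) the cocycle $\chi\circ\sigma$ is a coboundary on $Y$, hence $\sigma$ is cohomologous to a cocycle valued in the proper subgroup $\ker\chi\subsetneq A$, contradicting minimality of $(Z,R)$. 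You never invoke ergodicity of $\sigma$, and without it you cannot conclude $\lvert G\rvert=\lvert A\rvert$; your closing sentence ``a comparison of the two cocycles modulo coboundaries finishes the identification'' then hides exactly the missing step, namely that two ergodic $A$-cocycles inducing the same image $G$ are equivalent. The same omission also leaves the well-definedness of your forward map unproven: different invariant-factor decompositions, representatives $f_i$, or solutions $g_i,h_i$ could a priori yield non-isomorphic extensions.

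For comparison, the paper avoids coordinates on $G$ altogether. It first records that intermediate $A$-extensions correspond to ergodic classes in $\ker\bigl(p^*\otimes\id_A\colon K_0(Y)\otimes A\to K_0(X)\otimes A\bigr)$ modulo automorphisms of $A$ (Proposition~3.2), and then applies a purely homological identification $\ker(j\otimes\id_A)\cong\tor(M,A)\cong\hom(\widehat A,M)$ with $M=K_0(X)/p^*K_0(Y)$ (Proposition~3.3, proved in Appendix~E), under which ergodicity corresponds to injectivity of the homomorphism and automorphism-equivalence to equality of image. Your hands-on approach is closer in spirit to the paper's direct treatment of the cyclic case in Appendix~B.
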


In the special case when the finite group is 
$\Z_n:=\Z/n\Z$, 
Theorem  \ref{th:finite-abel}
says that $p \colon (X,T) \to (Y,S)$ admits no nontrivial intermediate 
$\Z_n$-extension if and only if the group  $K_0(X,T)/p^*K_0(Y,S)$ is 
torsion free. We give an easy direct proof of this fact in Appendix~\ref{sec:appendix}.

Notice that the condition that no intermediate nontrivial finite cyclic group
extension of $Y$ exists, is clearly
satisfied when $p$ is either a weakly mixing extension
or when it is a proximal extension (see Definition~\ref{def:extension} below).
 Hence, for such an extension,  if
$K_0(X,T)/\allowbreak p^*K_0(Y,S)$ is nontrivial 
it is torsion free.
This latter assertion is a result of Giordano Putnam and Skau 
(see~\cite{GPS1}).
For later use we state this fact as a formal corollary:
 
\begin{corollary}\label{cor:wm}
For an extension $p : (X,T) \to (Y,S)$ between Cantor minimal systems which
is either weakly mixing or proximal, we have
$$
\torsion (K_0(X,T)/p^*K_0(Y,S)) = 0.
$$
 \end{corollary}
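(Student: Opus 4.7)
The proof is a short application of Theorem \ref{th:finite-abel} together with standard facts from topological dynamics. The plan is contrapositive: assume $\torsion(K_0(X,T)/p^*K_0(Y,S))$ is nontrivial and pick a nonzero element $\alpha$ of order $n>1$. The cyclic subgroup $\langle\alpha\rangle\cong\Z/n\Z$ is a finite subgroup, so by Theorem \ref{th:finite-abel} it corresponds to a nontrivial intermediate extension $(X,T)\overset{r}{\to}(Z,R)\overset{q}{\to}(Y,S)$ in which $q$ is an extension of $(Y,S)$ by the finite abelian group $\widehat{\Z/n\Z}\cong\Z/n\Z$. The corollary reduces to showing that no such nontrivial intermediate finite-group extension can exist when $p$ is weakly mixing or proximal.

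The key observation is that any extension by a finite group is equicontinuous, and in particular distal: the fibers of $q$ are orbits of a finite group $G$ commuting with the dynamics, and the discrete bi-invariant metric on $G$ witnesses equicontinuity of $q$. In the proximal case, distality of $q$ forces $q$ to be an isomorphism. Indeed, given $z,z'\in Z$ with $q(z)=q(z')$, pick lifts $x,x'\in X$ under $r$; then $p(x)=p(x')$, so proximality of $p$ furnishes a sequence $n_k$ with $d(T^{n_k}x,T^{n_k}x')\to 0$, and applying the continuous map $r$ shows that $(z,z')$ is a proximal pair for $R$, which distality of $q$ forces onto the diagonal. In the weakly mixing case I would invoke the classical fact that a weakly mixing extension admits no nontrivial intermediate equicontinuous (or even isometric) factor, which rules out a nontrivial $q$ equally well.

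Either way the intermediate extension must be trivial, contradicting the existence of $\alpha$, and the torsion group vanishes. The conceptual content has already been packaged in Theorem \ref{th:finite-abel}; the remaining dynamical inputs — finite group extensions are distal/equicontinuous, proximal extensions have no nontrivial intermediate distal factor, weakly mixing extensions have no nontrivial intermediate equicontinuous factor — are entirely classical and pose no real obstacle, so the proof is essentially an observation.
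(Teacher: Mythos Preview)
Your proposal is correct and follows essentially the same route as the paper: the paper deduces the corollary directly from Theorem~\ref{th:finite-abel} (in its cyclic special case) together with the remark that the condition ``no nontrivial intermediate finite cyclic group extension'' is clearly satisfied when $p$ is proximal or weakly mixing. You have simply supplied the standard dynamical details (finite group extensions are distal/equicontinuous, hence incompatible with proximality or weak mixing) that the paper leaves implicit.
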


\begin{theorem}
\label{th:abel-max}
Let $p\colon(X,T)\to(Y,S)$ be a factor map between Cantor dynamical systems. Let $H$ be the torsion subgroup of $K_0(X,T)/p^*K_0(Y,S)$ and $K$ the dual group of $H$; recall that $K$ is a Cantor abelian group.

Then there exists an intermediate extension $(X,T)\overset{r}{\rightarrow} (\zab,\rab)\overset{q}{\rightarrow}(Y,S)$ where 
$q\colon (\zab,\rab)\to(Y,S)$ is an extension by $K$.

Moreover, the family of intermediate extensions which are extensions of $(Y,S)$ by a compact 
finite or Cantor abelian group coincides with the family of intermediate extensions below $(\zab,\rab)$.
\end{theorem}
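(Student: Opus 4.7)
The plan is to construct $(\zab,\rab)$ as the supremum of the intermediate extensions associated via Theorem~\ref{th:finite-abel} to all finite subgroups of $H$, and then to show that this supremum absorbs every finite or Cantor abelian intermediate extension.

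Since $H$ is a torsion abelian group, it is the directed union of its finite subgroups $(G_\alpha)_{\alpha\in A}$ ordered by inclusion. Theorem~\ref{th:finite-abel} assigns to each $G_\alpha$ an intermediate extension $(X,T)\to(Z_\alpha,R_\alpha)\to(Y,S)$ which is an extension of $(Y,S)$ by the finite abelian group $\widehat{G_\alpha}$, and the order-preserving clause makes $\{(Z_\alpha,R_\alpha)\}_\alpha$ directed. I would define $(\zab,\rab)$ to be its supremum as constructed in Section~\ref{sec:abelian}; concretely $\zab$ is the inverse limit $\varprojlim_\alpha Z_\alpha$, obtained as the closure in $\prod_\alpha Z_\alpha$ of the diagonal image of $X$. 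The duals of the inclusions $G_\alpha\hookrightarrow G_\beta$ provide surjections $\widehat{G_\beta}\to\widehat{G_\alpha}$ compatible with the group actions on the $Z_\alpha$, so the inverse-limit group acts coordinate-wise on $\zab$ with quotient $Y$. By Pontryagin duality $\varprojlim_\alpha\widehat{G_\alpha}=\widehat{\varinjlim_\alpha G_\alpha}=\widehat H=K$, so $(\zab,\rab)\to(Y,S)$ is an extension by $K$; since $K$ is profinite abelian, it is either finite or Cantor (as already noted in Section~\ref{subsec:inter_ab}).

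For the characterization of the extensions below $(\zab,\rab)$, one direction is immediate from Section~\ref{subsec:inter_ab}: any intermediate extension below $(\zab,\rab)$ is itself an extension by a quotient of $K$, hence by a finite or Cantor abelian group. For the other direction, let $(X,T)\to(Z,R)\to(Y,S)$ be an intermediate extension by a finite or Cantor abelian group $L$. If $L$ is finite, Theorem~\ref{th:finite-abel} places $(Z,R)$ below $(\zab,\rab)$ directly. If $L$ is Cantor it is profinite, so writing $L=\varprojlim_N L/N$ as $N$ ranges over open subgroups of finite index gives $Z=\varprojlim_N Z/N$. Each intermediate factor $Z/N\to Y$ is an extension by the finite abelian group $L/N$ and therefore lies below $(\zab,\rab)$ by the finite case; the resulting factor maps $\zab\to Z/N$ are forced to be compatible by the uniqueness statement from Section~\ref{sec:abelian}, so they assemble into a factor map $\zab\to\varprojlim_N Z/N=Z$, placing $(Z,R)$ below $(\zab,\rab)$.

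The main obstacle I anticipate is checking that the supremum genuinely is an extension by $K$ rather than by some smaller quotient. The subtle point is that the actions of the $\widehat{G_\alpha}$, sitting a priori in distinct copies of $\mathrm{Homeo}(Z_\alpha)$, must fit together coherently across the inverse system to produce a continuous action of $K$ on $\zab$ with orbit space $Y$. The cleanest verification proceeds fiber by fiber over $Y$: each fiber $q_\alpha\inv(y)\subset Z_\alpha$ is a $\widehat{G_\alpha}$-torsor, and the compatibility of the connecting surjections forces the inverse-limit fiber $q\inv(y)\subset\zab$ to be a $K$-torsor, which is exactly the content of $(\zab,\rab)\to(Y,S)$ being an extension by $K$.
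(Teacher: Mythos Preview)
Your proposal is correct and uses the same ingredients as the paper (Theorem~\ref{th:finite-abel}, the closure remarks of Section~\ref{subsec:inter_ab}, and the profinite decomposition of Cantor abelian groups), but the construction is organized in the opposite direction. The paper first \emph{defines} $(\zab,\rab)$ as the supremum of \emph{all} intermediate extensions by finite or Cantor abelian groups; this makes the maximality clause automatic, and by Section~\ref{subsec:inter_ab} this supremum is an extension by some finite or Cantor abelian group $K$. It then writes $K$ as an inverse limit of a sequence of finite groups $K_i$, applies Theorem~\ref{th:finite-abel} to obtain an increasing sequence $H_i\subset K_0(X)/p^*K_0(Y)$ with $\widehat{H_i}=K_i$, and shows that $H:=\bigcup_i H_i$ equals the full torsion subgroup by invoking Theorem~\ref{th:finite-abel} once more for an arbitrary finite subgroup $G$. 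You instead build $(\zab,\rab)$ upward from the finite subgroups of $H$ and must then separately verify that the resulting group is exactly $K=\widehat H$ (your ``main obstacle'') and that the extension is maximal among all finite or Cantor abelian ones. The paper's order sidesteps the fiber-by-fiber torsor verification you anticipate, since the identification of $K$ with $\widehat H$ is obtained purely from the bijection of Theorem~\ref{th:finite-abel} rather than from analyzing the inverse-limit action; your route, in exchange, makes the link between $(\zab,\rab)$ and $H$ transparent from the start.
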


\begin{proof}[Proof of Theorem~\ref{th:abel-max} assuming Theorem~\ref{th:finite-abel}]

Let $(X,T)\overset{r}{\rightarrow} (\zab,\rab)\overset{q}{\rightarrow}(Y,S)$ be the supremum of all intermediate extensions by finite or Cantor  abelian groups. By preceding remarks, $(\zab,\rab)$ is an extension of $(Y,S)$ by a compact finite or Cantor abelian group $K$, and this extension satisfies the second statement of the theorem.

Since $K$ is either finite or Cantor, it can be written as an inverse limit of a sequence 
$K_1\leftarrow K_2\leftarrow\cdots\leftarrow K$ of finite 
abelian groups. It follows that $(\zab,\rab)$ is the supremum of an increasing sequence of intermediate extensions $(Y,S)\leftarrow (Z_1,R_1)\leftarrow (Z_2,R_2)\leftarrow\cdots\leftarrow(X,T)$, where $(Z_i,R_i)$ is an extension of $(Y,S)$ by $K_i$.

By Theorem~\ref{th:finite-abel}, $ K_0(X,T)/p^*K_0(Y,S)$ contains an increasing sequence $H_1\subset H_2\subset\cdots$ of finite abelian groups, with $\widehat{H_i}=K_i$ for every $i$. Let $H$ be the union of the groups $H_i$. Then we have $\widehat H=K$.

Since $H$ is a union of finite groups it is included in the torsion subgroup of 
$ K_0(X,T)/p^*K_0(Y,S)$ and it remains only to show the converse inclusion that is, to prove that every finite subgroup $G$ of $ K_0(X,T)/p^*K_0(Y,S)$ is included in $H$.
Let $(Z_G,R_G)$ be the intermediate extension associated to $G$ by Theorem~\ref{th:finite-abel}. It is an extension of $(Y,S)$ by the group $\widehat G$. This extension is below 
$(\zab,\rab)$ by definition of this last extension, 
and thus $\widehat G$ is a quotient of $K$. 
Therefore, there exists an $i$ such that $\widehat G$ is a quotient of $K_i$ (this can be seen by duality), and thus $(Z_G,R_G)$ is below $(Z_i,R_i)$. By the last statement of Theorem~\ref{th:finite-abel}, $G\subset H_i\subset H$ and we are done.
\end{proof}

\subsection{The absolute case}
\label{subsec:abs1}
By considering the case that $Y$ is  the trivial system consisting in a single point, we recover known results. 
The intermediate extensions between $Y$ and $X$ which are extensions of $Y$ by finite or Cantor abelian groups are simply the factors of $X$ that are rotations on finite or Cantor abelian groups. The maximal element $(\zrat,\rrat)$ in this family of factors is called the \emph{rational equicontinuous factor}. 
It is also the supremum of all factors of $(X,T)$ that are rotations on finite cyclic groups. It is also the factor of $(X ,T)$ spanned by the continuous eigenfunctions corresponding to rational eigenvalues, and the dual group $\widehat{\zrat}$ is the group of rational eigenvalues of $(X,T)$.

On the other hand, we have that $K_0(Y)=\Z$, with order unit $1$. The map $p^*\colon K_0(Y)\to K_0(X)$ is given by $k\mapsto ke_X$, where $e_X$ is the order unit of $K_0(X)$.
If we identify $\Z e_X$ and $\Z$, we have that the torsion subgroup of 
$K_0(X)/p^*K_0(Y)$ is the group of rational elements of $K_0(X)$ modulo $1$. By Theorem~\ref{th:abel-max} we get
$$
\widehat{\zrat}=\torsion\Bigl(\frac{(K_0(X)}{p^*K_0(Y)}\Bigr)
=\torsion\Bigl(\frac{K_0(X)}{\Z}\Bigr)
=\frac{ \{\alpha\in K_0(X)\colon \exists n>0,\ n\alpha\in\Z\} }{\Z}\ .
$$
 Since the dimension group is invariant under strong orbit equivalence
\cite{GPS},
we recover here the well known fact 
that the group of rational eigenvalues is invariant under strong orbit equivalence.
(See Ormes \cite[Theorem 2.2]{O}, where it is attributed to Giordano-Putnam-Skau, with a proof due to B. Host.)

We return to the absolute case in Section~\ref{subsec:abs2}.

\subsection{Non Abelian group extensions}\label{NA}

The natural generalization of the notion of an extension by a finite group is that of finite {\em isometric extension}. When $p : (X,T) \to (Y,S)$ is
an extension between Cantor minimal systems
this means that $X=Y\times G/H$ where $G$ is a finite group, $H$ a subgroup of $G$, and  $T(y,uH)=(Sy,\sigma(y)uH)$ for some $G$-cocycle $\sigma$. 
%%%
For a more detailed definition see Section~\ref{sec:isometric}.

Call an extension of dynamical systems
$U \overset \theta \to V$ {\it fi-free (finite isometric free)} if there is no
intermediate extension $U \to W\overset \eta \to V$ where $\eta$ is a
finite to one isometric extension. We say that $U \overset \theta   \to V$ is  
{\it fc-free (finite cyclic free)} if there is no intermediate extension
$U \to W\overset \eta \to V$ where $\eta$ is a finite cyclic group extension.

In this terminology Theorem \ref{th:finite-abel} states that {\em an extension 
$p : (X,T) \to (Y,S)$ between minimal Cantor
systems is fc-free if and only if $K_0(X,T)/p^*K_0(Y,S)$ is torsion free}.

\begin{theorem}\label{simple}
There exists an extension 
$p: (X,T) \to (Y,S)$ of Cantor minimal systems
which is a group extension with a finite
noncommutative simple group as the group of the 
extension (hence the extension is not fi-free),
for which the quotient group $K_0(X,T)/p^*K_0(Y,S)$ is (nontrivial and) 
torsion free.
\end{theorem}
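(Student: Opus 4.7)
The plan is to construct the example as a minimal group extension $p\colon X=Y\times G\to Y$, $T(y,g)=(Sy,\sigma(y)g)$, where $G$ is a finite nonabelian simple group and $(Y,S)$ is a carefully chosen uniquely ergodic base. Take $G=A_5$, of order $60$, fix a prime $p_0$ coprime to $60$ (e.g.\ $p_0=7$), and let $(Y,S)$ be the $p_0$-adic odometer. Then $(Y,S)$ is a uniquely ergodic minimal Cantor system with invariant measure $\nu$, and the $\nu$-state identifies $K_0(Y,S)$ with the subring $\Z[1/p_0]\subset\R$.

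Next, choose a continuous cocycle $\sigma\colon Y\to G$ so that the skew product $(X,T)$ is minimal. Such $\sigma$ exist over any minimal odometer for every finite group $G$ by standard inductive Toeplitz-type constructions on refining Rokhlin towers of $(Y,S)$; one arranges that the cumulative cocycles $\sigma(S^{n-1}y)\cdots\sigma(y)$ become dense in $G$ along suitable subsequences, forcing minimality of $(X,T)$. Then $p$ is a group extension by the finite nonabelian simple group $G$ and is therefore not fi-free.

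To see that $K_0(X,T)/p^*K_0(Y,S)$ is torsion-free, by Theorem~\ref{th:finite-abel} it suffices to show that $p$ is fc-free. The $G$-action on $X$ is free and commutes with $T$, and a classical fact about principal group extensions of minimal systems asserts that every intermediate factor $X\to Z\to Y$ has the form $Z=X/H$ for some subgroup $H\leq G$. For $Z\to Y$ to be a finite abelian (in particular cyclic) group extension, $H$ must be normal in $G$ with $G/H$ abelian; simplicity and non-commutativity of $G=A_5$ then force $H=G$, so $Z=Y$. Hence $p$ admits no nontrivial intermediate cyclic group extension, and torsion-freeness follows.

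For nontriviality, take $\mu=\nu\times\operatorname{Haar}_G$; since $\operatorname{Haar}_G$ is left-invariant and $\nu$ is $S$-invariant, $\mu$ is $T$-invariant. The corresponding state $\tau\colon K_0(X,T)\to\R$, $\tau([f])=\int f\,d\mu$, restricts on $p^*K_0(Y,S)$ to the $\nu$-state on $K_0(Y,S)$, whose image is $\Z[1/p_0]$. However $\tau([\mathbf{1}_{Y\times\{e\}}])=\mu(Y\times\{e\})=1/60\notin\Z[1/p_0]$, so $[\mathbf{1}_{Y\times\{e\}}]\notin p^*K_0(Y,S)$ and the quotient is nontrivial. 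The most delicate step is the existence of the minimal cocycle $\sigma$; the rest is a clean application of Theorem~\ref{th:finite-abel}, the correspondence between intermediate factors of a free group extension and subgroups of the group, and a dimension-group state comparison.
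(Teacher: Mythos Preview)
Your argument for torsion-freeness is fine and is essentially the paper's: since $G$ is simple nonabelian, any intermediate abelian group factor of $Y$ would force a nontrivial abelian quotient of $G$, which does not exist; hence $p$ is fc-free and Theorem~\ref{th:finite-abel} applies. Your state argument for nontriviality is also correct \emph{once you have the extension}: the product measure $\nu\times\mathrm{Haar}_G$ gives a state taking the value $1/|G|$ on $[\mathbf 1_{Y\times\{e\}}]$, and this value is not in $\Z[1/p_0]$ when $\gcd(p_0,|G|)=1$.

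The gap is in the existence step. Over the $p_0$-adic odometer there is \emph{no} minimal skew-product extension by a noncyclic finite group. Any continuous $\sigma\colon Y\to G$ with $G$ finite is locally constant and therefore factors through some finite quotient $\pi_n\colon Y\to\Z/p_0^n\Z$. Writing $c=\sigma^{(p_0^n)}(y_0)$ for a base point $y_0$ with $\pi_n(y_0)=0$, one has $\sigma^{(mp_0^n)}(y_0)=c^{m}$ for every $m$, because $\pi_n(S^jy_0)=j\bmod p_0^n$ makes the product over any block of length $p_0^n$ equal to $c$. Since $S^{k}y_0\to y_0$ in the odometer forces $p_0^n\mid k$ eventually, every limit point of $\sigma^{(k_i)}(y_0)$ along return times lies in the cyclic group $\langle c\rangle$. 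Thus the Mackey group of $\sigma$ is cyclic, and the skew product $Y\times_\sigma G$ is minimal only if $G$ itself is cyclic. In particular no minimal $A_5$-extension of the $7$-adic odometer exists, so the ``standard inductive Toeplitz-type construction on refining Rokhlin towers'' you invoke cannot work over this base.

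This is precisely why the paper does not build directly over an odometer. In Appendix~\ref{app:B} the base $(Y,S)$ is a Toeplitz (or horocycle-derived) system---an almost one-to-one extension of an odometer, not the odometer itself---on which minimal $G$-cocycles do exist (Theorem~\ref{thm:Toeplitz}); nontriviality of the quotient is then obtained not by a single state computation but by manufacturing, via the Furstenberg--Weiss theorem, two distinct $T$-invariant measures on $X$ projecting to the same measure on $Y$, so that $p_*\colon M_T(X)\to M_S(Y)$ is not injective and hence $p^*\colon K_0(Y)\to K_0(X)$ is not surjective. Your state idea can be salvaged if you replace the odometer by a regular Toeplitz system over the $p_0$-adic odometer (so that the unique state still takes values in $\Z[1/p_0]$) and construct the cocycle there as in Theorem~\ref{thm:Toeplitz}; but as written, the construction over the bare odometer fails.
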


\begin{proof}[Proof of Theorem \ref{simple}]
In the Appendix~\ref{app:B} below we show how one can construct
an extension $p\colon (X,T) \to (Y,S)$ of Cantor minimal systems
which is a group extension with a finite 
noncommutative simple group $G$ as the group of the 
extension and such that the affine map
$p_* : M_T(X) \to M_S(Y)$, induced on the 
simplexes of invariant probability measures, is not an isomorphism.
As these simplexes can be identified with the state spaces of
the corresponding dimension groups,
the latter property shows that the inclusion map 
$p^*: K_0(Y,S) \to K_0(X,T)$ is not surjective, whence the group  
$K_0(X,T)/p^*K_0(Y,S)$
is non-trivial.
This extension $p$, being a finite isometric extension, 
is clearly not fi-free and we next show
that nevertheless the quotient group
$K_0(X,T)/p^*K_0(Y,S)$ is torsion free.
Now if there exists an intermediate 
extension $X \to Z \to Y$ where 
$Z \to Y$ is a nontrivial cyclic group extension
with cyclic group $K=\Z_k$ as the group
of the extension, then $K$ is a homomorphic image
of $G$, hence trivial. Thus the extension $p$
is an fc-free extension and by Theorem \ref{th:finite-abel}, 
$K_0(X,T)/p^*K_0(Y,S)$, which is nontrivial, is torsion free. 
This completes the proof of the theorem.
\end{proof}

\section{A proof of Theorem~\ref{th:finite-abel}}\label{proof}

\subsection{$G$-cocycles}
Here,  $(Y,S)$ is a Cantor minimal system and $G$ is a finite abelian group, written with the additive notation. For more information on topological cocycles 
we refer to \cite{GH}, \cite{At} and \cite{LM}.
\begin{definition}
 A \emph{$G$-cocycle} of this system is a continuous map $\sigma\colon Y\to G$. The additive group of $G$-cocycles is written $\CC(Y,G)$.

For $\sigma\in\CC(Y,G)$ the \emph{coboundary} of $\sigma$ is the $G$-cocycle $\beta_G\sigma=\sigma\circ S-\sigma$. The range $\beta_G\CC(Y,G)$ is called the group of $G$-coboundaries.

Two $G$-cocycles are \emph{cohomologous} if their difference is a $G$-coboundary.

A $G$-cocycle $\sigma$ is \emph{ergodic} if it is not cohomologous to any cocycle with values in a proper subgroup $G'$ of $G$.

Two $G$-cocycles are \emph{equivalent} if there 
exists an automorphism $\phi$ of $G$ such that the cocycles $\phi\circ\sigma$ and $\tau$ are cohomologous.
\end{definition}

We remark that $\CC(Y,G)$ can be identified with $\CC(Y)\otimes G$ and we have $\beta_G=\beta\otimes\id_G$. Since $K_0(Y):=\CC(Y)/\beta\CC(Y)$ is torsion-free by Lemma~\ref{lem:K0torsionfree},
%%%
$(\beta\CC(Y))\otimes G$ can be considered as a subgroup of $\CC(Y)\otimes G$, and this subgroup is the range of $\beta\otimes\id_G$.
We get an isomorphism
$$
K_0(Y,S)\otimes G\cong\frac{\CC(Y,G)}{\beta_G\CC(Y,G)}\ ,
$$
and we identify these two groups.
Thus,  
a $G$-cocycle is ergodic if and only if its class in $K_0(Y,S)\otimes G$ does not belong to $K_0(Y,S)\otimes G'$ for any proper subgroup 
$G'$ of $G$.

Moreover, if 
$\alpha,\alpha'\in K_0(Y,S)\otimes G$ are the classes of two $G$-cocycles $\sigma,\sigma'\in\CC(Y,G)$, then these cocycles are equivalent if and only 
if there exists an automorphism $\phi$ of $G$ such that $\alpha'=\bigl(\id_{K_0(Y,S)}\otimes\phi\bigr)(\alpha)$.

\subsection{$G$-cocycles and extensions}
Here again,  $(Y,S)$ is a Cantor minimal system and $G$ is a finite abelian group, written with the additive notation. 

To every $G$-cocycle $\sigma\in\CC(Y,G)$ we associate an extension $q_G\colon(Y\times G,S_\sigma)\to(Y,S)$ by setting
$S_\sigma(y,g)=(Sy,g+\sigma(y))$ and $q_G(y,g)=y$.
We recall that this extension is minimal if and only if the cocycle $\sigma$ is ergodic. The proof of the next lemma is straightforward.
\begin{lemma}
\label{lem:equi}
Let $\sigma$ and $\tau$ be two ergodic $G$-cocycles on $(Y,S)$. Then these $G$-cocycles are equivalent if and only if the associated extensions are isomorphic (as extensions), meaning that there exists an isomorphism $\theta\colon (Y\times G,S_\sigma)\to(Y\times G,S_\tau)$ with $q_G\circ\theta=q_G$. In this case, the isomorphism $\theta$ has the form
$\theta(y,g)=(y, u(y)+\chi(g))$ where $\chi$ is an automorphism of $G$ and $u\in\CC(Y,G)$ satisfies $\tau(y)=u(Sy)-u(y)+\chi\circ\sigma(y)$. 
\end{lemma}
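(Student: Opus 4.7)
The proof will be essentially algebraic; the only genuinely dynamical input will be an appeal to minimality of the group extension. I plan to verify the forward direction by direct substitution, and to obtain the reverse direction by unwinding the intertwining relation and extracting the automorphism $\chi$ as the constant value of a certain $S_\sigma$-invariant function.

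For the forward implication, suppose $\chi\in\mathrm{Aut}(G)$ and $u\in\CC(Y,G)$ satisfy $\tau(y) = u(Sy)-u(y)+\chi\circ\sigma(y)$. I would define $\theta(y,g):=(y,\,u(y)+\chi(g))$, observe that $q_G\circ\theta = q_G$ holds on the nose, and note that $g\mapsto u(y)+\chi(g)$ is a bijection of $G$ for each $y$ (since $\chi$ is an automorphism), so $\theta$ is a homeomorphism of $Y\times G$. A direct computation using the cohomology relation then yields $\theta\circ S_\sigma = S_\tau\circ\theta$.

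For the reverse implication, assume $\theta\colon (Y\times G,S_\sigma)\to(Y\times G,S_\tau)$ is an isomorphism of extensions. Commutation with $q_G$ forces $\theta(y,g)=(y,\psi(y,g))$ for a continuous $\psi\colon Y\times G\to G$ whose section $g\mapsto\psi(y,g)$ is a bijection of $G$ for every $y$, and the intertwining relation $\theta\circ S_\sigma = S_\tau\circ\theta$ reduces to
\[
\psi(Sy,\,g+\sigma(y)) = \psi(y,g)+\tau(y)\quad\text{for all }y\in Y,\ g\in G.
\]
The key step is then: for fixed $h\in G$, set $F_h(y,g):=\psi(y,g+h)-\psi(y,g)$, and observe by applying the displayed identity at $g$ and at $g+h$ that $F_h\circ S_\sigma = F_h$. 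Since $\sigma$ is ergodic, $(Y\times G,S_\sigma)$ is minimal, so the continuous $S_\sigma$-invariant function $F_h$ must be constant, say equal to $\chi(h)$. Setting $g=0$ yields $\psi(y,h)=u(y)+\chi(h)$ with $u(y):=\psi(y,0)$; fiberwise bijectivity of $\psi$ forces $\chi$ itself to be bijective, and expanding $\psi(y,g+h)=\psi(y,g)+\chi(h)$ with this formula gives $\chi(g+h)=\chi(g)+\chi(h)$, so $\chi\in\mathrm{Aut}(G)$. Substituting $\psi(y,g)=u(y)+\chi(g)$ back into the displayed identity then cancels the $\chi(g)$ terms and produces the required relation $\tau(y)=u(Sy)-u(y)+\chi\circ\sigma(y)$.

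The only nontrivial step will be the appeal to minimality of $(Y\times G,S_\sigma)$ to conclude that $F_h$ is constant in $(y,g)$; this is exactly what promotes $\chi$ from a potentially $y$-dependent map to a global automorphism of $G$. The remainder of the argument is bookkeeping with the definitions of $S_\sigma$, $S_\tau$, and the coboundary.
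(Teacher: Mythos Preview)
Your proof is correct; the paper omits the proof entirely, declaring the lemma ``straightforward,'' and your argument is precisely the natural verification one would expect. The only substantive idea is the invariance of $F_h$ under $S_\sigma$ combined with minimality to force $\chi$ to be constant, which you identify correctly.
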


\subsection{$G$-cocycles and intermediate extensions}
\label{subsec:inter}
Now we assume that $p\colon (X,T)\to(Y,S)$ is a factor map between Cantor minimal systems.

Let $G$ be a finite abelian group and let $\sigma\in\CC(Y,G)$ be an ergodic 
$G$-cocycle. 
Assume first that the extension $p_G\colon(Y\times G,S_\sigma)\to(Y,S)$ is an intermediate extension below $(X,T)$, meaning that there exists a factor map 
$r\colon(X,T)\to(Y\times G,S_\sigma)$ with $q_G\circ r=p$. Then the map $r$ has the form $x\mapsto (p(x),\phi(x))$, where $\phi\colon X\to G$ satisfies $\phi\circ T-\phi=\sigma\circ p$. In other words, $\sigma\circ p$ is a $G$-coboundary of $(X,T)$.
We remark that any other factor map $r'\colon(X,T)\to(Y,S_\sigma)$ with $q_G\circ r'=p$ has the form $x\mapsto(p(x),\phi(x)+c)$ for some constant $c\in G$.

Conversely, let $\sigma\in\CC(Y,G)$ be an ergodic cocycle such that $\sigma\circ p$ is a 
$G$-coboundary. We chose $\phi\in\CC(X,G)$ with $\beta_G\phi=\sigma\circ p$ and define
$r\colon X\to(Y\times G)$ by $r(x)=(p(x),\phi(x))$. 
We 
then have that $r\colon(Y\times G,S_\sigma)\to(Y,S)$ is a factor map and that $q_G\circ r=p$. We get an intermediate extension.

Next, consider two ergodic $G$-cocycles $\sigma,\sigma'\in\CC(Y,G)$ such that $\sigma\circ p$ and $\sigma'\circ p$ are $G$-coboundaries of $(X,T)$. From Lemma~\ref{lem:equi} it is easy to deduce that the associated intermediate extensions are isomorphic if and only if the cocycles $\sigma$ and $\sigma'$ are equivalent.

For an ergodic $G$-cocycle $\sigma\in\CC(Y,G)$, $\alpha\circ p$ is a $G$-coboundary on $(X,T)$ if and only if the class of $\sigma$ in $K_0(Y,S)\times G$ belongs to the kernel of $p^*\otimes\id _G\colon 
 K_0(Y,S)\otimes G\to K_0(X,T)$. We summarize these observations in the next Proposition. The last statement is given without a proof but can be proven by following the same lines.
 
\begin{proposition}
\label{prop:tensor}
Let $p\colon (X,T)\to(Y,S)$ be a factor map between Cantor minimal systems 
and let $G$ be a finite abelian group.

To every $\alpha\in K_0(Y,S)\otimes G$ satisfying
\begin{enumerate}
\item\label{it:ergodic}
$\alpha$ does not belong to $K_0(Y,S)\otimes G'$
for any proper subgroup $G'$ of $G$ and
\item\label{it:coboundary}
$\alpha$ belongs to the kernel of 
$$
p^*\otimes\id _G\colon  K_0(Y,S)\otimes G\to K_0(X,T)\otimes G
$$
\end{enumerate}
 we can associate an intermediate extension which is an extension of $(Y,S)$ by $G$. 

Every intermediate extension which is an extension of $(Y,S)$ by $G$ can be obtained in this way.

Moreover, the intermediate extensions associated to two elements $\alpha,\alpha'$ of  $K_0(Y,S) \allowbreak \otimes G$ satisfying these conditions are isomorphic if and only if there exists an automorphism $\phi$ of $G$ such that $\sigma'=(\id_{K_0(Y,S)}\otimes\phi)(\alpha)$.

Let $G,H$ be two finite groups, and suppose that $\alpha\in K_0(Y,S)\otimes G$ and $\beta\in K_0(Y,S)\otimes H$ satisfy the conditions~\ref{it:ergodic} and~\ref{it:coboundary} above. Then the intermediate extension associated to $\alpha$ is above the intermediate extension associated to $\beta$ if and only if there exists a group homomorphism $\theta$ of $G$ onto $H$ such that $\beta= (\id_{K_0(Y,S)}\otimes\theta)(\alpha)$.
\end{proposition}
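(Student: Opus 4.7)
The existence half of the proposition --- that every $\alpha \in K_0(Y,S) \otimes G$ satisfying \ref{it:ergodic} and \ref{it:coboundary} produces an intermediate extension of $(Y,S)$ by $G$, and that every such intermediate extension arises this way --- is essentially a translation of the discussion in subsection~\ref{subsec:inter} into the tensor-product language. Condition \ref{it:ergodic} says precisely that a cocycle $\sigma \in \CC(Y,G)$ representing $\alpha$ is ergodic, ensuring minimality of $(Y \times G, S_\sigma)$; condition \ref{it:coboundary} says that $\sigma \circ p$ is a $G$-coboundary on $(X,T)$, which provides the factor map $r\colon X \to Y \times G$, $x \mapsto (p(x), \phi(x))$. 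The isomorphism classification for two elements $\alpha, \alpha'$ of $K_0(Y,S) \otimes G$ follows by rewriting Lemma~\ref{lem:equi} in terms of the corresponding classes.

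For the final ``above'' statement, I would fix ergodic cocycles $\sigma$ and $\tau$ representing $\alpha$ and $\beta$, and assume a factor map $\psi\colon (Y\times G, S_\sigma) \to (Y \times H, S_\tau)$ over $(Y,S)$. Commutation with the projection to $Y$ forces $\psi(y,g) = (y, F(y,g))$ for a continuous $F\colon Y \times G \to H$, and the equivariance $\psi \circ S_\sigma = S_\tau \circ \psi$ becomes $F(Sy, g+\sigma(y)) = F(y,g) + \tau(y)$. The central step is a rigidity lemma, which I would establish separately: \emph{any two factor maps $\psi_1, \psi_2\colon (Y \times G, S_\sigma) \to (Y \times H, S_\tau)$ over $Y$ differ by a constant translation in $H$}. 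This is immediate from the observation that $F_2 - F_1$ is $S_\sigma$-invariant, combined with minimality of $(Y \times G, S_\sigma)$ and the fact that an $H$-valued continuous function on a minimal system is constant (being locally constant with finitely many values).

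With the rigidity lemma in hand, for each $g' \in G$ the translation $A_{g'}(y,g) = (y, g+g')$ is an automorphism of $(Y \times G, S_\sigma)$ over $Y$, so $\psi \circ A_{g'}$ is another factor map over $Y$, yielding a well-defined $\theta(g') \in H$ with $F(y, g+g') = F(y,g) + \theta(g')$ for every $y, g$. A short computation shows $\theta\colon G \to H$ is a group homomorphism, and surjectivity of $\psi$ forces $\theta$ to be onto $H$. Setting $u(y) := F(y,0)$ gives $\psi(y,g) = (y, u(y) + \theta(g))$, and substitution into the equivariance identity reduces to $\tau(y) - \theta(\sigma(y)) = u(Sy) - u(y)$, which is exactly the assertion $\beta = (\id_{K_0(Y,S)} \otimes \theta)(\alpha)$ in $K_0(Y,S) \otimes H$. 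Conversely, given a surjective $\theta$ with $\beta = (\id \otimes \theta)(\alpha)$, the cocycle $\theta \circ \sigma$ is cohomologous to $\tau$; choosing $u \in \CC(Y,H)$ witnessing this and setting $\psi(y,g) := (y, u(y) + \theta(g))$ produces the required factor map, exhibiting the $\alpha$-extension as lying above the $\beta$-extension.

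The main obstacle is the rigidity lemma, which forces any factor map between two such group extensions to intertwine the $G$- and $H$-translations through a single homomorphism; this is where both the minimality of $(Y \times G, S_\sigma)$ (equivalent to condition \ref{it:ergodic} for $\alpha$) and the free translation action on $(Y \times H, S_\tau)$ are used. Once this rigidity is established, the remaining verifications are routine cocycle manipulations essentially identical to those of subsection~\ref{subsec:inter}, and the parenthetical ``following the same lines'' in the statement becomes literal.
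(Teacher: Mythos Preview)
Your proposal is correct and matches the paper's approach: the paper presents this proposition as a summary of the preceding discussion in Section~\ref{subsec:inter} (the first three assertions are just restatements of the cocycle analysis there together with Lemma~\ref{lem:equi}), and explicitly says the last ``above'' statement ``is given without a proof but can be proven by following the same lines.'' Your rigidity lemma and the derivation of the homomorphism $\theta$ are precisely the natural generalization of Lemma~\ref{lem:equi} from automorphisms of $G$ to surjections $G\to H$, so you have faithfully supplied the details the paper omits.
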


\subsection{A translation into an algebraic problem}
The proof of Theorem~\ref{th:finite-abel} now reduces to the proof of the next algebraic result, 
applied with $K=K_0(Y,S)$, $j=p^*$ and $L=K_0(X,T)$.
\begin{proposition}
\label{prop:algebra}
Let
\begin{equation}
\label{eq:exactthree}0\to K\overset{j}{\rightarrow} L \overset{q}{\rightarrow}M\to 0
\end{equation}
be an exact sequence of abelian groups and assume that $L$ is torsion free. 
For every finite abelian group $G$, 
there exists an isomorphism 
$$\Phi_G\colon\ker(j\otimes\id_G)\to \hom(\widehat G,M)$$
where $\widehat G$ is the dual group of $G$.
Moreover, this isomorphism depends 
on the group $G$ in a functorial (covariant) way.

Let $\alpha\in\ker(j\otimes\id_G)$.  The homomorphism
$\Phi_G(\alpha)$ is 
one-to-one if and only if $\alpha$ 
does not belong to
$K\otimes G'$ for any proper subgroup $G'$ of $G$. 

Let $\alpha,\beta\in\ker(j\otimes\id_G)$ be such that
$\Phi_G(\alpha)$ and $\Phi_G(\beta)$ are 
one-to-one. These homomorphisms
have the same range if and only if there exists an automorphism $\phi$ of
$G$ such that
$(\id_K\otimes\phi)(\alpha)=\beta$.
\end{proposition}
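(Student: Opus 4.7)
\textbf{Proof plan for Proposition~\ref{prop:algebra}.}
The plan is to factor the isomorphism $\Phi_G$ through the torsion group $\tor(M,G)$. Tensoring the short exact sequence~\eqref{eq:exactthree} with $G$ yields the long exact sequence
\begin{equation*}
\tor(L,G)\to\tor(M,G)\to K\otimes G\xrightarrow{\,j\otimes\id_G\,} L\otimes G\xrightarrow{\,q\otimes\id_G\,}M\otimes G\to 0.
\end{equation*}
Since $L$ is torsion free we have $\tor(L,G)=0$, and the long exact sequence identifies $\tor(M,G)$ with $\ker(j\otimes\id_G)$ by a natural (covariant) isomorphism in $G$. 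So the problem is reduced to producing a natural isomorphism $\tor(M,G)\cong\hom(\widehat G,M)$ for every finite abelian $G$, and then translating the two auxiliary statements into properties of this composite.

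For the second step I would use the Smith normal form of $G$: choose a free resolution $0\to F_1\xrightarrow{\,\partial\,}F_0\to G\to 0$ with $F_0,F_1$ finitely generated free abelian and $\partial$ represented by a diagonal matrix with entries $n_1,\dots,n_k$. Then $\tor(M,G)=\ker(\id_M\otimes\partial)$, which canonically equals $\bigoplus_i M[n_i]$. Dualising the resolution produces a presentation of $\widehat G$, and the same diagonal data identifies $\hom(\widehat G,M)$ with $\bigoplus_i M[n_i]$ as well. Independence of the resolution gives a well defined isomorphism $\tor(M,G)\to\hom(\widehat G,M)$; naturality in $G$ is checked by lifting a morphism $G\to G'$ to a chain map between resolutions. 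The main obstacle I foresee is exactly this naturality, since one must verify that all choices wash out in the end; the cleanest way is probably to observe that both functors are the first derived functors of $G\mapsto G\otimes M$ and $G\mapsto \hom(\widehat G,M)$ respectively (both vanish on finite free modules and agree on $\Z/n\Z$), and then invoke uniqueness of derived functors, or to simply verify naturality cyclic summand by cyclic summand using the Snake Lemma applied to the multiplication-by-$n_i$ short exact sequence of $\Z$.

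With $\Phi_G$ in hand the last two assertions follow from functoriality. For injectivity: if $G'\subsetneq G$, then $\alpha$ lies in the image of $K\otimes G'\to K\otimes G$ if and only if $\alpha$ lies in the image of $\ker(j\otimes\id_{G'})\to\ker(j\otimes\id_G)$, the point being that $L\otimes G'\hookrightarrow L\otimes G$ because $\tor(L,G/G')=0$ (here $L$ torsion free is used again). By naturality this image is $\Phi_G^{-1}\bigl((\widehat\iota)^*\hom(\widehat{G'},M)\bigr)$, where $\widehat\iota\colon\widehat G\twoheadrightarrow\widehat{G'}$ is the dual of the inclusion. Hence $\alpha\in K\otimes G'$ for some proper $G'$ iff $\Phi_G(\alpha)$ factors through a proper quotient of $\widehat G$, iff $\Phi_G(\alpha)$ is not injective (using that subgroups of the finite abelian group $\widehat G$ correspond, by Pontryagin duality, to quotients $\widehat G\twoheadrightarrow\widehat{G'}$).

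Finally, for the ``same range'' statement, observe that two injective homomorphisms $\widehat G\to M$ have the same image if and only if they differ by an automorphism of $\widehat G$. Pontryagin duality for finite abelian groups identifies $\operatorname{Aut}(\widehat G)$ with $\operatorname{Aut}(G)$ via $\phi\mapsto\widehat\phi$, and functoriality of $\Phi_G$ gives $\Phi_G\bigl((\id_K\otimes\phi)(\alpha)\bigr)=\Phi_G(\alpha)\circ\widehat\phi$. Comparing the two sides yields the desired equivalence. I expect the only real work in the whole argument to be the naturality of the identification $\tor(M,G)\cong\hom(\widehat G,M)$; once it is in place, everything else is a formal consequence of the long exact Tor sequence and of Pontryagin duality.
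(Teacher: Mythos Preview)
Your plan is correct and follows the same overall architecture as the paper's proof: both first use the long exact Tor sequence (with $\tor(L,G)=0$ from torsion-freeness of $L$) to identify $\ker(j\otimes\id_G)$ with $\tor(M,G)$, then construct a natural isomorphism $\tor(M,G)\cong\hom(\widehat G,M)$, and finally derive the last two assertions from functoriality exactly as you do.

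The one genuine difference is in how the middle isomorphism is built. The paper proceeds more conceptually: it takes an arbitrary finitely generated free resolution $0\to A\to B\to G\to 0$, dualises into $\Z$ to exhibit $\ext(G,\Z)\cong\widehat G$, then applies $\hom(\,\cdot\,,M)$ and uses the canonical identification $\hom(\hom(A,\Z),M)\cong A\otimes M$ to land back in the sequence computing $\tor(M,G)$. Independence of the resolution then comes for free from chain equivalence. Your route via Smith normal form, reducing both sides to $\bigoplus_i M[n_i]$, is more hands-on; it works, but as you correctly anticipate, naturality in $G$ requires a separate verification (lifting morphisms to chain maps, or invoking uniqueness of derived functors). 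Either path is fine; the paper's has the slight advantage that naturality is automatic from the construction, while yours makes the objects more explicitly visible.
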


This Proposition is certainly well known. We give a proof for completeness but, 
as it is purely algebraic, 
we relegate it to Appendix~\ref{ap:algebra}.

\section{Higher order groups}\label{sec:cohomology}
\subsection{A classical exact sequence in topology}
In this section, $X$ and $Y$ are Cantor spaces and the map $p\colon X\to Y$ is continuous and onto. 
\begin{definition}
For every $n\geq 1$, the $n^{\text{th}}$ relative Cartesian power of $X$ with respect to $p$ is
 $$
X^n_p:=\bigl\{(x_1,x_2,\dots,x_n)\in X^n\colon p(x_1)=p(x_2)=\dots=p(x_n)\bigr\}\ .
$$
The natural projection $X^n_p\to Y$ is written $p_n$.
Thus we have $X^1_p=X$ and $p_1=p$. By convention, $X^0_p=Y$.
\end{definition}
It will be convenient to use this notation also in the case that $Y$ consists of a single point. In this case, we have $X^n_p=X^n$ for every $n\geq 1$ and $X^0_p=\{\text{pt}\}$. 
\begin{definition}
For every $n\geq 1$ and every $f\in\CC(X^n_p)$, let $\partial_nf\in\CC(X^{n+1}_p)$ be the function defined by
\begin{equation}
\label{eq:simplicial}
\partial_nf(x_1,x_2,\dots,x_{n+1})=\sum_{j=1}^{n+1}(-1)^{j+1}
f(x_1,x_2,\dots,\widehat{x_j},\dots,x_{n+1})
\end{equation}
where the symbol $\widehat{x_j}$ means that $x_j$ is to be omitted.
It will be convenient to write $\partial_0=p^*\colon\CC(Y)\to\CC(X)$.
\end{definition}
The next result is
classical, at least in the case that $Y$ consists of a single point. 

\begin{proposition}
\label{prop:exact}
The sequence
\begin{equation}
\label{eq:exact}
0\to\CC(Y)\overset{\partial_0=p^*}{\longrightarrow}\CC(X)
\overset{\partial_1}{\rightarrow}\CC(X^2_p)
\overset{\partial_2}{\rightarrow}\CC(X^3_p)
\overset{\partial_3}{\rightarrow}\cdots
\end{equation}
is exact.
\end{proposition}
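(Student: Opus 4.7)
The plan is to emulate the classical proof of acyclicity of the Čech complex of a surjection, by constructing a contracting cochain homotopy. Concretely, in positive degrees I would exhibit operators $h_{n+1}\colon \CC(X^{n+1}_p)\to\CC(X^n_p)$ satisfying the chain homotopy identity $\partial_n h_{n+1}+h_{n+2}\partial_{n+1}=\id$ on $\CC(X^{n+1}_p)$; this forces every cocycle to be the coboundary of its $h$-image.

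The low-degree positions are handled directly. Exactness at $\CC(Y)$ is the injectivity of $\partial_0=p^*$, which is immediate since $p$ is onto. Exactness at $\CC(X)$ amounts to the statement that any $f\in\CC(X)$ with $f(x_1)=f(x_2)$ whenever $p(x_1)=p(x_2)$ is of the form $g\circ p$ for some continuous $g\in\CC(Y)$; this holds because $p$, being a continuous surjection between compact Hausdorff spaces, is a quotient map.

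For the higher degrees the model calculation is the following: if $s\colon Y\to X$ is a continuous section of $p$, define
$$
(h_{n+1}f)(x_1,\ldots,x_n)=f\bigl(s(p(x_1)),x_1,\ldots,x_n\bigr);
$$
a direct expansion using \eqref{eq:simplicial} then yields $\partial_n h_{n+1}+h_{n+2}\partial_{n+1}=\id$, so every cocycle $f$ is $\partial_n(h_{n+1}f)$. The plan is to produce such a section adapted to a given $f\in\CC(X^{n+1}_p)$. Since $f$ is $\Z$-valued and continuous on the compact space $X^{n+1}_p$, it is locally constant and factors through a finite clopen partition $\mathcal U$ of $X$. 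By zero-dimensionality of $Y$, one then chooses a clopen partition $\mathcal V$ of $Y$ fine enough, together with a continuous section $s_V\colon V\to X$ of $p$ on each block $V\in \mathcal V$, obtained after a further refinement of $\mathcal U$ from a block $U_V\in\mathcal U$ on which $p$ restricts to a clopen homeomorphism $U_V\cap p^{-1}(V)\to V$. Gluing the $s_V$ produces a continuous section $s$, and the homotopy formula then delivers $g=h_{n+1}f\in\CC(X^n_p)$ with $\partial_n g=f$.

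The main obstacle is the construction of this partition-adapted section, since a continuous surjection between Cantor spaces is not generally open or locally injective, and the fibres can themselves be Cantor sets. The reduction to a finite clopen combinatorial model, and the refinement argument that produces the local sections $s_V$, is the real technical content; once the section is in place, the rest is a mechanical expansion of \eqref{eq:simplicial}.
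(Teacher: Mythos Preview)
Your contracting-homotopy strategy is the same idea the paper uses: the paper also kills cocycles by inserting a chosen fiber point as an extra coordinate, but it does so locally rather than through a global section. For a fixed $y\in Y$ and $a\in p^{-1}\{y\}$, the paper sets $u(x_1,\ldots,x_{n-1})=(-1)^{n+1}f(x_1,\ldots,x_{n-1},a)$ on the fiber $(p^{-1}\{y\})^{n-1}$, extends $u$ continuously to all of $X_p^{n-1}$, checks that $f-\partial_{n-1}u$ factors through $Y$ on that fiber, and then uses local constancy to push this equality to $p_n^{-1}(U)$ for some clopen $U\ni y$; finally it glues over a clopen partition of $Y$. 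No section is ever constructed.

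Your execution has a real gap, though. You ask for clopen blocks $U_V$ on which $p$ restricts to a homeomorphism $U_V\cap p^{-1}(V)\to V$. No such blocks need exist: if every fiber $p^{-1}\{y\}$ is itself a Cantor set (as happens, e.g., for the first projection $Y\times C\to Y$), then any nonempty clopen $U\subset X$ meets each fiber it touches in a nonempty clopen, hence infinite, set, so $p$ is nowhere locally injective and your local inverse $s_V$ cannot be produced this way.

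Two easy repairs are available. First, you do not actually need $p$ to be injective on $U_V$: since $f$ factors through the partition of $X_p^{n+1}$ induced by $\mathcal U$, you only need $V\subset p(U_V)$ for some $U_V\in\mathcal U$, and then \emph{any} choice $s_V(y)\in p^{-1}\{y\}\cap U_V$ (continuous or not) makes $h_{n+1}f$ continuous, because the value of $f$ depends only on the $\mathcal U$-block of each coordinate. Such $V$'s exist because $\{p(U):U\in\mathcal U\}$ is a finite closed cover of the zero-dimensional compact space $Y$ and therefore admits a subordinate clopen partition. Second, and more directly: every continuous surjection of compact metric spaces onto a zero-dimensional compact metric space admits a global continuous section (build it by successive refinement, shrinking small-diameter clopen covers of $X$ to clopen partitions of $Y$ at each stage), so your homotopy formula applies verbatim with no adaptation to $f$ needed at all.
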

The Proposition follows from the next Lemma, 
applied in the case that $Z=Y$. 
We write it in a general form for a possible further use.
%%% Changed the name of the maps for coherence
\begin{lemma}
\label{lem:factorize}
Let $X,Y$ and $Z$ be Cantor spaces and $r\colon X\to Z$ and $q\colon Z\to Y$ be continuous and onto maps. 
We write $ p= q\circ r$. For every $n\geq 1$, 
we remark that $r^{\times n}:=r\times r\times\dots\times r$ maps $X^n_ p$ 
onto $Z^n_q$.

Let  $n\geq 1$ be an integer and $f\in\CC(X_ p^n)$ be such that $\partial_nf$
factorizes through $r^{\times(n+1)}$. 
Then there 
exist $g\in \CC(X_ p^{n-1})$ and
$h\in\CC(Z^n_q)$ such that $f=\partial_{n-1}g+h\circ r^{\times n}$.
\end{lemma}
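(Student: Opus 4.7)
The plan is to reduce the claim to a finite combinatorial statement via compatible clopen partitions realizing $X$, $Y$, $Z$ as inverse limits of finite sets, and then to apply the classical simplicial contracting homotopy at the finite level.

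First I would pass to a finite level. Choose decreasing sequences of clopen partitions $\mathcal Q_k$ of $Y$, $\mathcal R_k$ of $Z$ refining $q^{-1}(\mathcal Q_k)$, and $\mathcal P_k$ of $X$ refining $r^{-1}(\mathcal R_k)$, all with diameters tending to zero. Let $Y_k, Z_k, X_k$ be the resulting finite quotients and $p_k, q_k, r_k$ the induced surjections, so that $Y = \varprojlim Y_k$, $Z = \varprojlim Z_k$, $X = \varprojlim X_k$. Because $f$ and $h'$ are continuous and $\Z$-valued, they factor through the finite levels for $k$ large: there exist $f_k$ on the finite analogue of $X^n_p$ and $h'_k$ on the finite analogue of $Z^{n+1}_q$ such that the identity $\partial_n f_k = h'_k \circ r_k^{\times(n+1)}$ persists.

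Next I would solve the finite problem by a simplicial chain homotopy. Since $p_k$ is a surjection between finite sets, pick a set-theoretic section $\tau_k \colon Y_k \to X_k$. Define, for $m \geq 1$,
$$
s_m(\phi)(x_1, \dots, x_{m-1}) := \phi\bigl(\tau_k(y), x_1, \dots, x_{m-1}\bigr)
$$
with $y := p_k(x_1) = \cdots = p_k(x_{m-1})$. Expanding the two alternating sums in $\partial_{m-1} s_m \phi$ and $s_{m+1} \partial_m \phi$ verifies the standard simplicial identity $\partial_{m-1} s_m + s_{m+1} \partial_m = \id$. Applied to $f_k$ and combined with the hypothesis, this produces
$$
f_k = \partial_{n-1}(s_n f_k) + s_{n+1}\bigl(h'_k \circ r_k^{\times(n+1)}\bigr),
$$
and the second summand equals $h'_k\bigl(r_k(\tau_k(y)), r_k(x_1), \dots, r_k(x_n)\bigr)$, which depends only on $(r_k(x_1), \dots, r_k(x_n))$ together with the common value $y = q_k(r_k(x_i))$; it therefore factors as $h_k \circ r_k^{\times n}$ for $h_k(z_1, \dots, z_n) := h'_k\bigl(r_k(\tau_k(q_k(z_1))), z_1, \dots, z_n\bigr)$. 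Setting $g_k := s_n f_k$ and pulling $g_k$, $h_k$ back along the quotient projections yields $g \in \CC(X^{n-1}_p)$ and $h \in \CC(Z^n_q)$ satisfying $f = \partial_{n-1} g + h \circ r^{\times n}$.

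The main point needing care is the first step: the natural map $X^n_p \to (X_k)^n_{p_k}$ may fail to be surjective, its image being the set of $n$-tuples of $\mathcal P_k$-cells whose $p$-images admit a common point in $Y$, so one must verify that the finite-level identities transfer back cleanly. This can be arranged by choosing the partitions so that the finite-level structure of $X^n_p$ matches the formal product $(X_k)^n_{p_k}$; once this compatibility is in place, the verification of the simplicial identity and the factorization of the homotopy error term through $r_k^{\times n}$ is routine.
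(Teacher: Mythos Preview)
Your contracting-homotopy idea is exactly the mechanism the paper uses, but the implementation through finite quotients has a genuine gap at precisely the spot you flag, and your proposed fix (``choose the partitions so that the finite-level structure of $X^n_p$ matches $(X_k)^n_{p_k}$'') does not work in general. The image of $X^n_p$ in $(X_k)^n_{p_k}$ consists of those tuples $(P_1,\dots,P_n)$ of cells with $\bigcap_i p(P_i)\neq\emptyset$; once $\mathcal P_k$ is fine enough for $f$ to factor, cells $P$ with $p(P)$ a \emph{proper} closed subset of their $\mathcal Q_k$-cell are unavoidable whenever $p$ is not open. Then the tuple $(\tau_k(y),x_1,\dots,x_{n-1})$ need not lie in the image, so $f_k$ is undefined there (or, if you extend $f_k$ arbitrarily, the identity $\partial_n f_k=h'_k\circ r_k^{\times(n+1)}$ no longer holds at $(\tau_k(y),x_1,\dots,x_n)$, which is exactly where you need it for the homotopy formula).

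The paper avoids this by never leaving $X$: for each $y\in Y$ it picks an actual point $a\in p^{-1}(y)$ and runs your homotopy on the genuine fibre $p_n^{-1}(y)=(p^{-1}(y))^n$, where inserting $a$ is legitimate by construction. This yields $u\in\CC((p^{-1}(y))^{n-1})$ and $v\in\CC((q^{-1}(y))^n)$ with $f=\partial_{n-1}u+v\circ r^{\times n}$ on that single fibre. Because integer-valued continuous functions on closed subsets of a Cantor set extend, $u$ and $v$ extend to $g\in\CC(X^{n-1}_p)$ and $h\in\CC(Z^n_q)$; local constancy then promotes the fibrewise identity to $p_n^{-1}(U)$ for some clopen $U\ni y$, and a finite clopen partition of $Y$ lets one glue. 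So the paper's argument is your homotopy applied fibre by fibre, followed by extension and patching --- which sidesteps the mismatch between $X^n_p$ and the formal finite fibre product entirely.
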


\begin{proof}
We consider first the case that $n=1$. In this case we build a function $h\in\CC(Z)$ with 
$f=h\circ r=h\circ r^{\times 1}$.
We have $\partial_1f=u\circ r^{\times 2}$ for some
$u\in\CC(Z^2_q)$. The restriction of $\partial_1f$ to the diagonal of $X^2_ p$ is equal to $0$ and thus, since $r$ is onto, the restriction of $u$ to the diagonal of $Z^2_q$ is equal to $0$.  If
$x,x'\in X$ are such that $ r(x)=r(x')$, then we have $f(x)-f(x')=u(r(x),r(x'))=0$;
it follows that $f$ factorizes through $r$, that is, $f=h\circ r$ for some $h\in\CC(Z)$.

\medskip We assume now that $n\geq 2$.  We claim that for every $y\in Y$ there exist a clopen neighborhood $U$ of $y$ and functions $g\in \CC(X_r^{n-1})$ and $h\in\CC(Z^n_q)$ such that $f=\partial_{n-1}g+h\circ r^{\times n}$ on 
$ p_n\inv(U)$.

Set
$F= p^{-1}\{y\}$ and  $K=q^{-1}\{y\}$. We remark that
$ p_n^{-1}\{y\}=F^n\subset X^n_ p$,
 that $q_n^{-1}\{y\}=K^n\subset Z^n_q$ and that $r^{\times n}$ maps $F^n$ onto $K^n$.

We choose an arbitrary
$a\in F$. The function $F^n\to\Z$ given by $(x_1,\ldots,x_n)\mapsto
(-1)^n\partial_nf(x_1,\ldots,x_n,a)$ factorizes through $K^n$
by hypothesis: there exists $v\in\CC(K^n)$ such that, for $(x_1,\dots,x_n)\in F^n$, we have
$$
(-1)^n\partial_nf(x_1,\ldots,x_n,a)=v\circ r^{\times n}(x_1,\dots,x_n)\ .
$$
For $(x_1,\ldots,x_{n-1})\in F^{n-1}$, we define
 $$
 u(x_1,\ldots,x_{n-1})=(-1)^{n+1}f(x_1,\ldots,x_{n-1},a)\ .
 $$
An immediate computation gives
$$\partial_{n-1}u +v\circ r^{\times n}=f\text{ on }F^n\ .
$$

Since $u$ is integer valued and continuous on the closed subset $F^{n-1}$ of the Cantor set $X^{n-1}_ p$,
it admits a continuous extension  $g\in\CC(X^{n-1}_ p)$ to  $X^{n-1}_ p$.
In the same way, the function $v\in\CC(K^n)$ admits a continuous extension 
$h\in  \CC(Z^n_q)$.
 
As $\partial_{n-1}g+h \circ r_n=f$ on $ p_n^{-1}\{y\}$ and all
these functions are continuous and integer valued and thus locally constant, 
this equality holds on some
neighborhood of $ p_n^{-1}\{y\}$ in $X^n_ p$.
Thus on $ p_n^{-1}(U)$ for some
clopen neighborhood  $U$ of $y$.  Our claim is proved.

As $Y$ is Cantor, we can
find a finite partition of $Y$ by clopen sets constructed as above, and by gluing together the different functions, we get  functions $g$ and
$h$ with the required properties.
\end{proof}

\begin{proof}[Proof of Proposition~\ref{prop:exact}]
Clearly, $\partial_{n+1}\circ\partial_n=0$ for all $n\geq 0$. 

Let $n\geq 1$ be an integer and $f\in\CC(X^{n-1}_p)$ belong to $\ker(\partial_n)$. We build a function $w\in\CC(X^{n-1}_p)$ with $f=\partial_{n-1}w$.

We use Lemma~\ref{lem:factorize} applied with $Z=Y$ and $q=\id$, noticing that $Z^n_q=Y$ and that $r^{\times n}=p_n$. We get that there exist $g\in\CC(X^{n-1}_p)$
and $h\in\CC(Y)$ with 
$$
f=\partial_{n-1}g+h\circ p_n\ .
$$
If $n$ is odd, we have $\partial_{n-1}(h\circ p_{n-1})=h\circ p_n$ and thus
$f=\partial_{n-1}(g+h\circ p_{n-1})$ and we can take $w=g+h\circ p_{n-1}$. If $n$ is even we have 
$$
0=\partial_nf=\partial_n\circ \partial_{n-1}g+\partial_n(h\circ p_n)=\partial_n(h\circ p_n)=
h\circ p_{n+1}
$$
and thus $h=0$, $f=\partial_{n-1}g$ and we can take $w=g$.
\end{proof}
\subsection{Higher order cohomology groups of an extension}
In the rest of this Section we assume that $p\colon(X,T)\to(Y,S)$ is a factor map between Cantor minimal dynamical systems.

For every $n\geq 1$, let $X^n_p$ be endowed with the diagonal transformation $T^{\times n}=T\times T\times\dots\times T$. In particular, $T^{\times 1}=T$ and $T^{\times 0}=S$ by convention. We write also $\beta_n$ instead of 
$\beta_{T^{\times n}}\colon\CC(X^n_p)\to \CC(X^n_p)$.

For every $n\geq 0$, we have clearly 
\begin{equation}
\label{eq:beta-partial}
\beta_{n+1}\circ\partial_n=\partial_n\circ\beta_n
\end{equation}
and thus the group homomorphism $\partial_n\colon \CC(X^n_p)\to \CC(X^{n+1}_p)$
 induces a group homomorphism
$$
\partial_n^*\colon K_0(X^n_p,T^{\times n})=
\frac{\CC(X^n_p)}{\beta_n\CC(X^n_p)}\longrightarrow
K_0(X^{n+1}_p,T^{\times (n+1)})=
\frac{\CC(X^{n+1}_p)}{\beta_{n+1}\CC(X^{n+1}_p)}\ .
$$
The exact sequence~\eqref{eq:exact} induces a sequence
\begin{equation}
\label{eq:inexact}
0\to K_0(Y,S)\overset{\partial_0^*=p^*}{\longrightarrow}   K_0(X,T)
\overset{\partial_1 *}{\rightarrow}  K_0(X^2_p,T^{\times 2})
\overset{\partial_2^*}{\rightarrow}K_0(X^3_p,T^{\times 3})
\overset{\partial_3^*}{\rightarrow}\cdots
\end{equation}
We have $\partial_{n+1}^*\circ\partial_n^*=0$ for every $n$.
\begin{definition}
We define the cohomology groups of the extension $p\colon(X,T)\to(Y,S)$ to be the homology groups of the sequence~\eqref{eq:inexact}. More explicitly, for $n\geq 0$,
$$
H^n(X\mid Y):=\frac{\ker(\partial_{n+1}^*)}{\range(\partial_n^*)}\ .
$$
\end{definition}

\subsection{Comparing $H^0(X\mid Y)$ and $K_0(X,T)/p^*K_0(Y,S)$}
In particular we have that 
$$
H^0(X\mid Y):=\frac{\ker(\partial_{1}^*)}{p^*K_0(Y,S)}
$$
is a subgroup of the group $K_0(X,T)/p^*K_0(Y,S)$ considered in Section~\ref{sec:abelian},
in which we gave an interpretation of the torsion subgroup
of this last group. In fact, because $K_0(X^2_p,T^{\times 2})$ is torsion free, we have:
\begin{corollary}
\label{cor:torsion}
$$
\torsion\Bigl(\frac{K_0(X,T)}{p^*K_0(Y,S)}\Bigr)
=\torsion\bigl(H^0(X\mid Y)\bigr)\ .
$$
\end{corollary}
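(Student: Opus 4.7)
The plan is to realize the inclusion $H^0(X\mid Y) \subseteq K_0(X,T)/p^*K_0(Y,S)$ as the kernel of a map into a torsion-free group; then the torsion subgroup must be contained in (hence equal to the torsion subgroup of) $H^0(X\mid Y)$.

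More precisely, I would first apply the third isomorphism theorem to the chain of subgroups
$p^*K_0(Y,S) \subseteq \ker(\partial_1^*) \subseteq K_0(X,T)$
to obtain a natural identification
$$
\frac{K_0(X,T)/p^*K_0(Y,S)}{H^0(X\mid Y)} \;\cong\; \frac{K_0(X,T)}{\ker(\partial_1^*)}.
$$
The right hand side embeds into $K_0(X_p^2, T^{\times 2})$ via $\partial_1^*$, so it is isomorphic to the subgroup $\range(\partial_1^*)$ of $K_0(X_p^2, T^{\times 2})$.

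The next step is to observe that $K_0(X_p^2, T^{\times 2})$ is torsion-free. This is exactly the content of Lemma~\ref{lem:K0torsionfree}: its proof uses only that we are dividing continuous integer-valued functions on a compact space, which works on $X_p^2$ (a closed subset of the Cantor space $X \times X$) just as well as on a Cantor set proper. Consequently $\range(\partial_1^*)$ is torsion-free, and hence so is the quotient $\bigl(K_0(X,T)/p^*K_0(Y,S)\bigr) / H^0(X\mid Y)$.

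Finally, from the short exact sequence
$$
0 \to H^0(X\mid Y) \to K_0(X,T)/p^*K_0(Y,S) \to \text{(torsion-free)} \to 0,
$$
any torsion element of the middle group maps to $0$ in the quotient and therefore lies in $H^0(X\mid Y)$. This gives $\torsion\bigl(K_0(X,T)/p^*K_0(Y,S)\bigr) \subseteq H^0(X\mid Y)$, and the reverse inclusion is automatic from $H^0(X\mid Y) \subseteq K_0(X,T)/p^*K_0(Y,S)$. There is no real obstacle here; the entire argument is a routine application of the third isomorphism theorem once Lemma~\ref{lem:K0torsionfree} is invoked on $X_p^2$.
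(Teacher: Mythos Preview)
Your argument is correct and is essentially the same as the paper's: both hinge on the single fact that $K_0(X_p^2,T^{\times 2})$ is torsion-free (Lemma~\ref{lem:K0torsionfree} applied to $X_p^2$), from which it follows that a torsion class in $K_0(X,T)/p^*K_0(Y,S)$ must lie in $\ker(\partial_1^*)/p^*K_0(Y,S)=H^0(X\mid Y)$. The only difference is packaging: the paper works elementwise with a representative $f$ and shows directly that $k[\partial_1 f]=0$ forces $[\partial_1 f]=0$, whereas you phrase the same step via the third isomorphism theorem and the embedding $K_0(X,T)/\ker(\partial_1^*)\hookrightarrow K_0(X_p^2,T^{\times 2})$.
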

%%%
Below we show that $H^0(X\mid Y)$ is a torsion group (Theorem~\ref{thm:H=torsion}).
\begin{proof}
Suppose $f \in \mathcal{C}(X)$ represents an element of
order $k$ in $\frac{K_0(X,T)}{p^*K_0(Y,S)},\ k [f] =0$; i.e.
\begin{equation}\label{kf}
kf(x) = g(y) + \phi(x)  - \phi(Tx),
\end{equation} 
for some $g \in \mathcal{C}(Y)$ and $\phi \in \mathcal{C}(X)$. 
We have to show that 
$f \in \ker(\partial^*_1)$, that is, that there exists $\psi \in \mathcal{C}(X^2_p)$ with $\partial_1(f)(x_1,x_2) = f(x_2) - f(x_1) = \psi(x_1,x_2) - \psi(Tx_1,Tx_2)$ on $X^2_p$.
Now from (\ref{kf}) we deduce that
$$
k(f(x_1) - f(x_2)) = \phi(x_1) - \phi(x_2) - (\phi(Tx_1) - \phi(Tx_2))=
\partial_1\phi \circ T^{\times 2} - \partial_1 \phi.
$$
This means that $k[f(x_1) - f(x_2)] = 0$ in $K_0(X^2_p,T^{\times 2})$.
However, the latter group is torsion free and we conclude that also
$[f(x_1) - f(x_2)]=0$, as required. 
\end{proof}

We immediately deduce the following: 
\begin{corollary}
Theorems~\ref{th:finite-abel} and~\ref{th:abel-max} remain valid with 
$H^0(X\mid Y)$ replacing 
\allowbreak 
$K_0(X,T)\allowbreak /p^*K_0(Y,S)$.
\end{corollary}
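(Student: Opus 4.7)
The plan is to leverage Corollary~\ref{cor:torsion} together with the elementary fact that any finite subgroup of an abelian group is contained in its torsion subgroup. First I would record that, since $H^0(X\mid Y) = \ker(\partial_1^*)/p^*K_0(Y,S)$ is by construction a subgroup of $K_0(X,T)/p^*K_0(Y,S)$, Corollary~\ref{cor:torsion} gives
\[
\torsion\bigl(H^0(X\mid Y)\bigr) = \torsion\Bigl(\frac{K_0(X,T)}{p^*K_0(Y,S)}\Bigr).
\]
Because every finite subgroup of an abelian group lies inside its torsion subgroup, the collection of finite subgroups of $H^0(X\mid Y)$ coincides with the collection of finite subgroups of $K_0(X,T)/p^*K_0(Y,S)$, and the inclusion order among finite subgroups is the same whether regarded inside $H^0(X\mid Y)$ or inside the larger quotient.

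Next I would deduce Theorem~\ref{th:finite-abel} with $H^0(X\mid Y)$ substituted for $K_0(X,T)/p^*K_0(Y,S)$. The bijection in Theorem~\ref{th:finite-abel} is a map on the family of finite subgroups, and both the domain family and the order relation on it are unchanged by the previous observation, so the same correspondence associates to each finite subgroup $G$ of $H^0(X\mid Y)$ the intermediate extension by $\widehat G$; the order-preserving property is automatic.

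For Theorem~\ref{th:abel-max}, the maximal abelian intermediate extension $(\zab,\rab)$ is built out of the torsion subgroup $H$ of $K_0(X,T)/p^*K_0(Y,S)$. Since this torsion subgroup coincides with $\torsion\bigl(H^0(X\mid Y)\bigr)$ by the equality above, the construction and dual-group description of $(\zab,\rab)$ can be read entirely inside $H^0(X\mid Y)$ with no change of content.

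There is essentially no obstacle here beyond bookkeeping; the only thing to verify is the trivial point that the inclusion $H^0(X\mid Y) \hookrightarrow K_0(X,T)/p^*K_0(Y,S)$ identifies the respective families of finite subgroups as ordered sets, which is immediate from Corollary~\ref{cor:torsion}.
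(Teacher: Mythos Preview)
Your argument is correct and is exactly the reasoning the paper has in mind: the paper merely writes ``We immediately deduce the following'' after Corollary~\ref{cor:torsion}, and you have spelled out the immediate deduction, namely that both theorems involve only finite subgroups or the torsion subgroup of $K_0(X,T)/p^*K_0(Y,S)$, which by Corollary~\ref{cor:torsion} coincide with those of $H^0(X\mid Y)$.
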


\begin{remark}\label{alln}
Essentially the same proof yields the following equation for all $n$, 
$$
\torsion\Bigl(\frac{K_0(X^{n+1}_p,T^{\times (n+1)})}
{\partial_n^*( K_0(X^n_p,T^{\times n}))}\Bigr)
=\torsion\bigl(H^n(X\mid Y)\bigr)
$$
%%%
and below we show also that $H^n(X\mid Y)$ is a torsion group (Theorem~\ref{thm:H=torsion}).
\end{remark}

It seems that in some cases 
the group $H^0(X\mid Y)$ is easier to compute than  the quotient 
$K_0(X,T)/p^*K_0(Y,S)$.  $H^0(X\mid Y)$ gives interesting  information about the extension, but some properties can not be seen on this group.

For example, consider the case that some $S$-invariant probability measure 
$\nu$ on $(Y,S)$ can be lifted as two distinct
$T$-invariant
probability measures $\mu$ and $\mu'$ on $(X,T)$. Then we can build the invariant probability $\omega=\mu\times_p\mu'$ on $X^2_p$. Let $f\in\CC(X)$ be such that $\int f\,d\mu\neq\int f\,d\mu'$. Then 
$$\int \bigl(f(x_1)-f(x_2)\bigr)\,d\omega(x_1,x_2)\neq 0
$$ and thus the function $\partial_1f$ is not a coboundary in $\CC(X^2_p,T^{\times 2})$. We constructed an element $[f]$ in $K_0(X,T)$ with 
$\partial_1[f]\neq 0$ and thus
$H^0(X\mid Y)\subsetneq K_0(X,T)/p^*K_0(Y,S)$. More generally, it seems that the group $H^0(X\mid Y)$ does not give any information on the question of unique lifting of measures.
 
\subsection{Cohomology groups via invariants}
\label{subsec:cohoInvar}
\begin{definition}
For every $n\geq 0$ we write $\CI(X^n_p)$ for the subgroup of $\CC(X^n_p)$ consisting of functions invariant under $T^{\times n}$.
\end{definition}
Recall that by convention $(X^0_p,T^{\times 0})=(Y,S)$. By minimality, all invariant continuous functions on $Y$ are constant and $\CI(X^0_p) = \CI(Y)=\Z$.
We have also  $(X^1_p,T^{\times 1})=(X,T)$ and thus $\CI(X^1_p) = \CI(X)=\Z$.

We remark that $\CI(X^n_p)$ is the kernel of the coboundary map 
$\beta_n\colon\CC(X^n_p)\to\CC(X^n_p)$. By~\eqref{eq:beta-partial}, for every $n\geq 0$ we have 
$\partial_n\CI(X^n_p)\subset \CI(X^{n+1}_p)$.
 We write 
$d_n\colon \CI(X^n_p)\to\CI(X^{n+1}_p)$ for the restriction of $\partial_n$ to 
$\CI(X^n_p)$.

\begin{proposition}\label{prop:chasing}
For every $n\geq 0$, 
$$
H^n(X\mid Y)\cong\frac {\ker(d_{n+3})}{\range(d_{n+2})} \ .
$$
Moreover, $d_2\colon \CI(X^2_p)\to\CI(X^3_p)$ is 
one-to-one.
\end{proposition}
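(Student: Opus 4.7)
The plan is to organize the relevant groups into four cochain complexes linked by two short exact sequences, and then to exploit the acyclicity of $(\CC(X^*_p),\partial)$ given by Proposition~\ref{prop:exact} to transfer cohomological information between them via the long exact sequence. Concretely, I would introduce $D^n=\CC(X^n_p)$, $I^n=\CI(X^n_p)=\ker\beta_n$, $B^n=\beta_n\CC(X^n_p)$, and $K^n=K_0(X^n_p,T^{\times n})=D^n/B^n$. The commutation identity~\eqref{eq:beta-partial} guarantees that $\partial_n$ descends to each of these groups, producing cochain complexes $(D^*,\partial)$, $(I^*,d)$, $(B^*,\partial)$, and $(K^*,\partial^*)$ that fit into two short exact sequences of chain complexes
$$
0\longrightarrow I^*\longrightarrow D^*\stackrel{\beta}{\longrightarrow}B^*\longrightarrow 0
\qquad\text{and}\qquad
0\longrightarrow B^*\longrightarrow D^*\longrightarrow K^*\longrightarrow 0.
$$

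The key point is that, by Proposition~\ref{prop:exact}, the middle complex $D^*$ is acyclic in every degree. Feeding $H^m(D^*)=0$ into the two long exact sequences in cohomology forces the connecting homomorphisms to be isomorphisms $H^m(B^*)\cong H^{m+1}(I^*)$ and $H^m(K^*)\cong H^{m+1}(B^*)$ for every $m\geq 0$. Composing these yields $H^m(K^*)\cong H^{m+2}(I^*)$. Since $H^n(X\mid Y)=\ker(\partial_{n+1}^*)/\range(\partial_n^*)$ coincides with the classical cohomology group $H^{n+1}(K^*)$, this gives the desired identification
$$
H^n(X\mid Y)\cong H^{n+3}(I^*)=\frac{\ker(d_{n+3})}{\range(d_{n+2})}.
$$

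For the injectivity of $d_2$ I would first observe that $d_1=0$: by minimality $I^1=\CI(X)=\Z$, and $d_1$ applied to a constant $c$ gives $\partial_1c(x_1,x_2)=c-c=0$. Hence $H^2(I^*)=\ker(d_2)$. Specializing the chain of isomorphisms above to $m=0$ yields $\ker(d_2)\cong H^1(B^*)\cong H^0(K^*)=\ker(\partial_0^*)=\ker(p^*)$, which vanishes by Lemma~\ref{lem:onetoone}. The only piece of bookkeeping is checking that the two displayed sequences really are short exact sequences of cochain complexes and tracking the index shifts carefully; these reduce to routine uses of~\eqref{eq:beta-partial} and the standard long exact sequence, so I do not expect a substantive obstacle here.
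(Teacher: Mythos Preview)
Your argument is correct and is essentially the paper's proof made explicit: the paper sets up the commutative diagram~\eqref{eq:diagram} whose exact columns $0\to\CI\to\CC\overset{\beta}{\to}\CC\to K_0\to 0$ split precisely into your two short exact sequences via the image complex $B^*$, and then says ``we get the announced result by diagram chasing''---which is exactly the pair of long exact sequence isomorphisms you write down. Your handling of $d_2$ (using $d_1=0$ and the identification $\ker d_2\cong H^0(K^*)=\ker p^*=0$ via Lemma~\ref{lem:onetoone}) is likewise what the diagram chase yields in low degree.
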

\begin{proof}
We write $j_n\colon \CI(X^n_p)\to\CC(X^n_p)$ for the inclusion map.

We get the commutative diagram:
\begin{equation}
\label{eq:diagram}
\begin{matrix}
 & & 0 & & 0 &  & 0 & & 0 &\\
 & & \downarrow & & \downarrow & & \downarrow & & \downarrow &\\
    0 & \to & \CI(Y) & \overset{d_0}{\rightarrow} & \CI(X)
    & \overset{d_1}{\rightarrow} & \CI(X^2_p)
    & \overset{d_2}{\rightarrow} & \CI(X^3_p)
    &\overset{d_3}{\rightarrow}\cdots\\
 & & \downarrow j_0 & & \downarrow j_1 & & \downarrow j_2 & & \downarrow j_3 &\\
   0 & \to & \CC(Y) & \overset{\partial_0}{\rightarrow} & \CC(X)
   & \overset{\partial_1}{\rightarrow} & \CC(X^2_p)
   & \overset{\partial_2}{\rightarrow} & \CC(X^3_p)
   &\overset{\partial_3}{\rightarrow}\cdots\\
 & & \downarrow\beta_0 & & \downarrow\beta_1 & & \downarrow\beta_2 & & \downarrow\beta_3 &\\
 0 & \to & \CC(Y) & \overset{\partial_0}{\rightarrow} & \CC(X)
   & \overset{\partial_1}{\rightarrow} & \CC(X^2_p)
   & \overset{\partial_2}{\rightarrow} & \CC(X^3_p)
   &\overset{\partial_3}{\rightarrow}\cdots\\
 & & \downarrow & & \downarrow & & \downarrow & & \downarrow &\\
 0 & \to & K_0(Y,S) & \overset{\partial_0^*}{\rightarrow} & K_0(X,T)
   & \overset{\partial_1^*}{\rightarrow} & K_0(X^2_p,T^{\times 2})
   & \overset{\partial_2^*}{\rightarrow} & K_0(X^3_p,T^{\times 3})
   &\overset{\partial_3^*}{\rightarrow}\cdots\\
 & & \downarrow & & \downarrow & & \downarrow & & \downarrow &\\
 & & 0 & & 0 &  & 0 & & 0 &
\end{matrix}
\end{equation}
where the vertical arrows $\CC(X^n_p)\to K_0(X^n_p,T^{\times n})$ are the quotient maps.
Therefore  the columns of this diagram are exact sequences.
In the first row, $\CI(Y)=\CI(X)=\Z$ and $d_0$ is the identity map. 
By construction, $d_1$ is the zero map.

By Proposition~\ref{prop:exact}, the second
%%%
and third rows of this 
diagram are exact.
We get the announced result by diagram chasing.
\end{proof}

\begin{comments}
Proposition \ref{prop:chasing} turns out to be a powerful tool in computing cohomology groups. The main reason for this is as follows. 
For a general dynamical system $(X,T)$ one defines the {\em prolongation relation} on $X$ 
(see \cite{AS}) :
\begin{definition}
${\rm Prol}(x)$ is the set of pairs $(x,x')\in X^2$ such that there exist a sequence $(x_i)$ converging to $x$ in $X$ and a sequence 
$(n_i)$ of integers such that the sequence $(T^{n_i}x_i)$ converges to $x'$.
\end{definition}
It is now easy to see that a continuous invariant function on $X$
is necessarily constant on each prolongation class 
${\rm Prol}[x]$.
Note also that for every $x$, its orbit closure is contained in ${\rm Prol}[x]$.
These facts will be used repeatedly in the sequel. For more details
on the prolongation relation see \cite{AG}.
\end{comments}

\section{Some particular cases}

\subsection{Proximal and weakly mixing extensions}
Here and in the sequel, $d$ denotes a distance defining the topology of $X$.

\begin{definition}
\label{def:extension}
Let $p\colon(X,T)\to(Y,S)$ be an extension between Cantor minimal systems.
\begin{enumerate}
\item
We say that $p$ is a \emph{proximal extension} if for every $x_1,x_2\in X$ with 
$p(x_1)=p(x_2)$
we have $\inf_{k>0} d(T^kx_1,T^kx_2)=0$.
\item
It is a \emph{weakly mixing extension} if the system $X_p^2$ is topologically ergodic 
(or topologically transitive), that is, there is a point $(x_1,x_2) \in X_p^2$
with dense $T^{\times 2}$-orbit. 
\item
It is a \emph{weakly mixing of all orders} extension if $X_p^n$ is topologically ergodic for 
every $n \ge 1$.
\item
When $Y$ is a trivial one point system we recover the usual definitions 
of a weak mixing system and a weakly mixing system of all orders. 
By a theorem of Furstenberg, a weakly mixing system is also 
weakly mixing of all orders.
\end{enumerate}
\end{definition}

\begin{theorem}\label{thm:proximal}
\label{th:proximal}
If $p\colon(X,T)\to(Y,S)$ is either a proximal extension
or a weakly mixing of all orders extension between Cantor minimal systems then $H^n(X\mid Y)=0$ for every $n\geq 0$.
In particular, for a minimal Cantor weakly mixing system $(X,T)$ we have
$H^n(X) = 0$ for every $n \ge 0$.
\end{theorem}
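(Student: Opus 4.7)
The plan is to invoke Proposition~\ref{prop:chasing}, which identifies $H^n(X\mid Y)$ with $\ker(d_{n+3})/\range(d_{n+2})$, thereby reducing the theorem to a computation of the groups $\CI(X^n_p)$ and the simplicial maps $d_n$ between them. The central claim I will establish is that, in both cases, $\CI(X^n_p)=\Z$ (constants only) for every $n\geq 0$. Granting this, the remainder is routine: on a constant $c$ one has $\partial_n c=\bigl(\sum_{j=1}^{n+1}(-1)^{j+1}\bigr)c$, which equals $c$ for $n$ even and $0$ for $n$ odd, so the complex of invariants takes the shape
\[
\Z\xrightarrow{\id}\Z\xrightarrow{0}\Z\xrightarrow{\id}\Z\xrightarrow{0}\cdots,
\]
whose cohomology vanishes at every position; Proposition~\ref{prop:chasing} then yields $H^n(X\mid Y)=0$ for every $n\geq 0$.

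In the weakly mixing of all orders case the claim is immediate: by hypothesis each $X^n_p$ ($n\geq 1$) has a dense $T^{\times n}$-orbit, so any continuous invariant function is constant on that orbit and, by continuity, on all of $X^n_p$; the case $n=0$ is the minimality of $(Y,S)$.

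The main obstacle is the proximal case. Here I plan to prove the stronger statement that the $T^{\times n}$-orbit closure of every point $(x_1,\ldots,x_n)\in X^n_p$ meets the diagonal $\Delta_n$. The argument is iterative: proximality of $p$ applied to the pair $(x_1,x_2)$ (which lie in a common $p$-fiber) produces times $k_j\to\infty$ with $d(T^{k_j}x_1,T^{k_j}x_2)\to 0$, and after passing to a subsequence $T^{k_j\times n}(x_1,\ldots,x_n)$ converges to a point $(z,z,z_3,\ldots,z_n)\in X^n_p$. Applying the same argument inside this new orbit closure to the pair $(z,z_3)$, which still lies in a common $p$-fiber, and iterating $n-1$ times in all, one reaches a limit point inside $\Delta_n$. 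Any $h\in\CI(X^n_p)$ is constant on its orbit and hence on the orbit closure, so $h(x_1,\ldots,x_n)$ equals $h$ evaluated at a diagonal point; the restriction of $h$ to $\Delta_n\cong X$ is a continuous $T$-invariant function on $X$, hence constant by minimality of $(X,T)$.

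Finally, the ``in particular'' assertion follows by taking $(Y,S)$ to be the one-point system and invoking Furstenberg's theorem, recalled in Definition~\ref{def:extension}(iv), which promotes a weakly mixing Cantor minimal system to weakly mixing of all orders.
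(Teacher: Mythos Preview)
Your proof is correct and follows the same approach as the paper: reduce via Proposition~\ref{prop:chasing} to showing $\CI(X^n_p)=\Z$ for all $n$, then observe that the resulting complex of invariants is acyclic. Your iterative argument in the proximal case (successively collapsing coordinates in the orbit closure) is a slightly more explicit version of the paper's terse ``by induction on $n$, $\inf_{k>0}\diameter(\{T^kx_1,\dots,T^kx_n\})=0$''; the two are equivalent and the rest is identical.
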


\begin{proof}
Consider first the case when $p$ is a proximal extension.
We claim that for every $n\geq 0$, all the $T^{\times n}$-invariant functions on $X^n_p$ are constant, that is, $\CI(X^n_p)=\Z$. 

For $n=0$ or $1$ the claim is obvious by minimality. 
Let $n\geq 2$ and $f\in\CI(X^n_p)$. 
Recall that $f$ is integer valued and continuous, thus locally constant. By minimality, $f$ is equal to a constant $c\in\Z$ on the diagonal of $X^n_p$.
 Let $\bx=(x_1,\dots,x_n)\in X^n_p$. 
By induction on $n$, we have that 
$\inf_{k>0}\diameter(\{T^kx_1,\dots,T^kx_n\})=0$. 
Therefore,
 there exists $k>0$ such that 
$f(T^kx_1,\dots,T^kx_n)=f(T^kx_1,\dots,T^kx_1)=c$ and by invariance we get that $f(\bx)=c$. The claim is proved.

Therefore, in the first row of the diagram~\eqref{eq:diagram}, all the groups are equal to $\Z$; By definition, $d_n$ is the identity map if $n$ is even and is the zero map if $n$ is odd. Therefore, the first row of the diagram is an exact sequence. The announced result follows now from Proposition~\ref{prop:chasing}.

The proof when $p$ is assumed to be weakly mixing of all orders is similar
and straightforward.
If $f \in \mathcal{I}^n_p$ and $(x_1,x_2,\dots,x_n)$ is a point of $X_p^n$ with dense orbit then 
for every $k,l$ we have $f(T^kx_1,\dots,T^kx_n)=f
(T^lx_1,\dots,T^lx_n)=c$
and by the continuity of $f$ we conclude that $f = c$.
\end{proof}

In particular, we have in this cases that $H^0(X\mid Y)=0$. 
It now follows that for any weakly mixing minimal Cantor system $(X,T)$
we have $0 = 
%%%
H^0(X) \subsetneq K_0(X,T)$.

The group 
$K_0(X,T)/p^*K_0(Y,S)$ can not be computed so easily. 
Since the sequence~\eqref{eq:inexact} is exact, we have:
$$
\frac{K_0(X,T)}{p^*K_0(Y,S)}\cong \range(\partial_1^*)=\ker(\partial_2^*)\ .
$$

We next investigate intermediate extensions. We need a Lemma.
\begin{lemma}
\label{lem:proximal}
Let $r\colon (X,T)\to(Z,R)$ be a proximal extension. Then, for every $\epsilon>0$, every integer $n>0$ and all $(x_1,x'_1)$, \dots, $(x_n,x'_n)$ in $X^2_r$ there exists an integer $k$ such that
$d(T^kx_1,T^kx'_1)<\epsilon$, \dots, $d(T^k_n,T^kx'_n)<\epsilon$.
\end{lemma}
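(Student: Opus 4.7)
The plan is to reduce the lemma to a structural statement about minimal subsets of a suitable product, and then exploit the proximality of $r$. Consider the $2n$-tuple $\bx=(x_1,x'_1,\dots,x_n,x'_n)\in X^{2n}$, and let $M$ be any minimal subset of the orbit closure of $\bx$ under the diagonal action $T^{\times 2n}$. I will argue that $M$ is contained in the \emph{pair diagonal} $\Delta:=\{(y_1,y_1,\dots,y_n,y_n)\colon y_i\in X\}$. Once this is established, the lemma is immediate: since $M$ is contained in the orbit closure of $\bx$, there exists $k\in\Z$ with $(T^{\times 2n})^k\bx$ arbitrarily close to a point of $M\subset\Delta$, and this gives $d(T^kx_i,T^kx'_i)<\epsilon$ for every $i=1,\dots,n$ simultaneously.

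To prove $M\subset\Delta$, fix $i\in\{1,\dots,n\}$ and let $\pi_i\colon X^{2n}\to X^2$ be the projection onto coordinates $(2i-1,2i)$. The condition $r(y_{2i-1})=r(y_{2i})$ is closed and $T\times T$-invariant, and it holds on $\bx$, hence on its entire orbit closure; therefore $\pi_i(M)\subset X^2_r$. Moreover, continuous factors of minimal systems are minimal, so $\pi_i(M)$ is a minimal subset of $(X^2_r,T\times T)$. The key substep is therefore to prove that the only minimal subset of $(X^2_r,T\times T)$ is the diagonal $\Delta_X=\{(y,y)\colon y\in X\}$.

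This is exactly where the proximality of $r$ enters. Given a minimal $N\subset X^2_r$, pick any $(y,y')\in N$; by proximality of $r$ there exist $k_j\in\Z$ and $z\in X$ with $(T^{k_j}y,T^{k_j}y')\to(z,z)$, so $(z,z)\in N$. Minimality of $(X,T)$ implies that the orbit closure of $(z,z)$ in $X\times X$ is $\Delta_X$, and the minimality of $N$ then forces $N=\Delta_X$. Applying this to $\pi_i(M)$ for every $i$ shows that every point of $M$ has its $(2i-1)$-th and $(2i)$-th coordinates equal, i.e.\ $M\subset\Delta$, which completes the plan.

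The main obstacle is the identification of $\Delta_X$ as the unique minimal subset of $(X^2_r,T\times T)$; once this is isolated, the simultaneous-synchronization statement of the lemma follows from a routine argument on minimal subsets of the $2n$-fold diagonal product.
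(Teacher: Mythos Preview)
Your argument is correct, and it takes a genuinely different route from the paper's proof. The paper proceeds by induction on $n$: assuming the first $n-1$ pairs can be simultaneously synchronized along some sequence $(k_j)$, it passes to limits $y_i$ (for $i<n$) and $(y_n,y'_n)\in X^2_r$, applies proximality to find $\ell$ with $d(T^\ell y_n,T^\ell y'_n)<\epsilon/3$, and then uses continuity of $T^\ell$ to recover simultaneous $\epsilon$-closeness at time $\ell+k_j$ for large $j$. Your argument instead replaces the induction by a single structural observation: any minimal subset of the orbit closure of the $2n$-tuple must project, under each pair-coordinate map, onto a minimal subset of $X^2_r$, and proximality forces such a minimal set to lie in the diagonal $\Delta_X$. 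This is cleaner and exposes the underlying dynamical content---namely, that for a proximal extension $\Delta_X$ is the \emph{unique} minimal subset of $(X^2_r,T\times T)$---which is a standard and useful reformulation of proximality. One small remark: you invoke minimality of $(X,T)$ to conclude that the orbit closure of $(z,z)$ is all of $\Delta_X$, but you do not actually need this. Since $(z,z)\in N$ and $N$ is minimal, the orbit closure of $(z,z)$ equals $N$; and that orbit closure is trivially contained in $\Delta_X$, so $N\subset\Delta_X$, which is all the subsequent step requires. The paper's inductive proof likewise does not use minimality of $(X,T)$ beyond what is built into the definition of a proximal extension.
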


\begin{proof} 
%%% replaced many i by j
The proof goes by induction on $n$. For $n=1$, the statement is nothing else than the definition of a proximal extension. We take $n\geq 2$ and assume that the results holds for $n-1$ pairs in $X^2_r$.

Let $(x_1,x'_1)$, \dots, $(x_n,x'_n)\in X^2_r$. By the induction hypothesis, there exists a sequence $(k_j)$ of integers such that, for $1\leq i\leq n-1$, $d(T^{k_j}x_i,T^{k_j}x'_i)\to 0$ as $j\to+\infty$ . Substituting a subsequence for this sequence, we can assume that for $1\leq i\leq n-1$ the sequences 
$(T^{k_j}x_i)$ and $(T^{k_j}x'_i)$ converge to the same point $y_i\in X$. We can assume also that the sequence $(T^{k_j}x_n)$ converges to some point $y_n$ and the sequence $(T^{k_j}x'_n)$ to some point $y'_n$. Since $(x_n,x'_n)\in X^2_r$, we have that $(y_n,y'_n)\in X^2_r$.

Since the extension $r$ is proximal, there exists an integer $\ell$ such that $d(T^\ell y_n,T^\ell y'_n)\allowbreak <\epsilon/3$. By continuity of $T^\ell$, for $j$ sufficiently large we have
$d(T^\ell y_n,T^{\ell+k_j}x_n)<\epsilon/3$, $d(T^\ell y'_n,T^{\ell+k_j}x'_n)<\epsilon/3$   and thus 
$d(T^{\ell+k_j}x_n, T^{\ell+k_j}x'_n)<\epsilon$. On the other hand, by continuity of $T^\ell$ again,
for  $j$ sufficiently large we have $d(T^{\ell+k_j}x_i, T^\ell y_i)<\epsilon/2$ and 
$d(T^{\ell+k_j}x'_i, T^\ell y_i)<\epsilon/2$, and thus $d(T^{\ell+k_j}x_i,T^{\ell+k_j}x'_i)<\epsilon$. 
We get the announced statement with $k=\ell+k_j$ for $j$ sufficiently large.
\end{proof}

\begin{proposition}
\label{HnXZ}
Let $p : (X,T) \to (Y,S)$ be an extension between Cantor minimal systems and 
\begin{equation}
\label{eq:tri}
\xymatrix{
(X,T) \ar[dr]^r \ar[rr]^p & & (Y,S)\\
 & (Z,R) \ar[ur]^q & 
 }
\end{equation}
an intermediate extension with $Z$ Cantor, such that $r$ is 
a proximal extension. Then 
$$
H^n(X \mid Y) = H^n(Z \mid Y),
$$
for every $n \ge 0$.
\end{proposition}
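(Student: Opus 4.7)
The plan is to invoke Proposition~\ref{prop:chasing} and show that the pullback by $r^{\times n}$ gives an isomorphism between the chain complexes $\bigl(\CI(Z^n_q),d_n\bigr)$ and $\bigl(\CI(X^n_p),d_n\bigr)$ that compute $H^n(Z\mid Y)$ and $H^n(X\mid Y)$ respectively; this will immediately yield the conclusion.

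Set $\rho_n\colon \CC(Z^n_q)\to\CC(X^n_p)$, $g\mapsto g\circ r^{\times n}$. Since $r\circ T=R\circ r$, the map $r^{\times n}$ intertwines $T^{\times n}$ and $R^{\times n}$, so $\rho_n$ carries $\CI(Z^n_q)$ into $\CI(X^n_p)$. Writing out~\eqref{eq:simplicial} directly gives the naturality identity $\partial_n\circ\rho_n=\rho_{n+1}\circ\partial_n$, so $\rho$ is a chain map on the invariants. Surjectivity of $r$ and compactness imply that $r^{\times n}\colon X^n_p\to Z^n_q$ is onto, so $\rho_n$ is injective.

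The core step is to show that $\rho_n$ is also \emph{surjective} onto $\CI(X^n_p)$, that is, every $T^{\times n}$-invariant $f\in\CC(X^n_p)$ factors through $r^{\times n}$. Let $(x_1,\dots,x_n),(x'_1,\dots,x'_n)\in X^n_p$ satisfy $r(x_i)=r(x'_i)$ for all $i$; each pair $(x_i,x'_i)$ then lies in $X^2_r$. Since $f$ is integer valued and continuous on the compact set $X^n_p$, it is uniformly locally constant: there exists $\epsilon>0$ such that any two points at distance less than $\epsilon$ (in the product metric) take the same $f$-value. By Lemma~\ref{lem:proximal} applied to the $n$ pairs $(x_i,x'_i)$ in the proximal extension $r$, we can find $k$ with $d(T^kx_i,T^kx'_i)<\epsilon$ for all $i$, and therefore $f(T^kx_1,\dots,T^kx_n)=f(T^kx'_1,\dots,T^kx'_n)$. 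Invariance of $f$ then gives $f(x_1,\dots,x_n)=f(x'_1,\dots,x'_n)$. Hence $f$ descends to a well-defined function $g\colon Z^n_q\to\Z$; continuity of $g$ follows because $r^{\times n}$ is a continuous surjection between compact metric spaces (hence a topological quotient), and $R^{\times n}$-invariance of $g$ is inherited from $T^{\times n}$-invariance of $f$.

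Combining these facts, $\rho$ is an isomorphism of chain complexes on invariants, so Proposition~\ref{prop:chasing} yields $H^n(X\mid Y)\cong H^n(Z\mid Y)$ for every $n\geq 0$. The main obstacle is the surjectivity argument above, which relies crucially on the ``$n$-fold simultaneous proximality'' Lemma~\ref{lem:proximal}; everything else is a formal diagram/identification, and no additional hypothesis on $q$ is needed because the invariants on $X^n_p$ are already controlled by those on $Z^n_q$.
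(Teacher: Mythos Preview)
Your proof is correct and follows essentially the same route as the paper: reduce via Proposition~\ref{prop:chasing} to showing that the pullback $r^{\times n*}\colon \CI(Z^n_q)\to\CI(X^n_p)$ is a bijection, with the only nontrivial point being surjectivity, handled exactly as you do by combining local constancy of $f$ with the simultaneous proximality Lemma~\ref{lem:proximal}. Your write-up is slightly more explicit (chain-map compatibility, continuity of the descended $g$), but the argument is the same.
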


\begin{proof}

The map $r^{\times n}\colon (X_p^n,T^{\times n})\to (Z_q^n,R^{\times n})$ is a factor map, and thus the map $r^{\times n*}\colon f\mapsto f\circ r^{\times n}$
$$
r^{\times n*}(f)(x_1,\dots,x_n)=f(r(x_1),\dots,f(r(x_n))
$$
maps $\CI(Z^n_q)$ into $\CI(X^n_p)$ in a 
one-to-one way. By Proposition~\ref{prop:chasing} it suffices to prove that this map is onto. 
This claim is equivalent to:
\begin{itemize}
\item[]\it 
For every $f\in\CI(X^n_p)$ and for every $\bx=(x_1,\dots,x_n)$ and every $\bx'=(x'_1,\dots,x'_n)$ in $X^n_p$, if 
\begin{equation}
\label{eq:hyp2}
r(x_1)=r(x'_1),\ r(x_2)=r(x'_2),\ \dots,\text{ and }r(x_n)=r(x'_n)\ .
\end{equation}
then $f(\bx)=f(\bx')$.
\end{itemize}
Assume that $\bx$ and  $\bx'\in X^n_p$  satisfy~\eqref{eq:hyp2}. 
Since $f$ is locally constant, there exists $\epsilon>0$ such that, if $\by$ and $\by'\in X^n_p$ satisfy
$d(y_i,y'_i)<\epsilon$ for $1\leq i\leq n$, then $f(\by)=f(\by')$.
Since the extension $r$ is proximal, by Lemma~\ref{lem:proximal} there exists 
 an integer $k$ with
$$
d(T^k x_i,T^kx'_i)<\epsilon\text{ for } 1\leq i\leq n\ .
$$
Therefore  $f(T^kx_1,\dots,T^kx_n)=f(T^kx'_1,\dots,T^kx'_n)$. Since $f$ is invariant, we get $f(\bx)=f(\bx')$. 
\end{proof}

\subsection{Extensions with connected 
fibers}

\begin{theorem}\label{Hnconnected}
Let $p: (X,T) \to (Y,S)$ be an extension between Cantor minimal systems.
Consider an intermediate extension, as in (\ref{eq:tri}),
such that the fibers of $q$ are connected and $r$ is a proximal extension.
Then $H^n(X \mid Y)=0$ for every $n \ge 0$. 
\end{theorem}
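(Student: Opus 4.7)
The plan is to show that under the combined hypotheses every $T^{\times n}$-invariant continuous $\Z$-valued function on $X^n_p$ is a constant, and then read off $H^n(X\mid Y)=0$ from Proposition~\ref{prop:chasing}. The argument splits into two descents: first $\CI(X^n_p)$ descends to $Z^n_q$ using proximality of $r$, and then $\CC(Z^n_q)$ descends to $\CC(Y)$ using the connected-fibre hypothesis on $q$.

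Step one repeats the key argument in the proof of Proposition~\ref{HnXZ}, which uses only proximality of $r$ and local constancy of $f$. Given $f\in\CI(X^n_p)$ and $\bx=(x_1,\dots,x_n)$, $\bx'=(x'_1,\dots,x'_n)$ in $X^n_p$ with $r(x_i)=r(x'_i)$ for each $i$, local constancy of $f$ combined with Lemma~\ref{lem:proximal} yields an integer $k$ with $f(T^kx_1,\dots,T^kx_n)=f(T^kx'_1,\dots,T^kx'_n)$; invariance then gives $f(\bx)=f(\bx')$. Hence $f=\tilde f\circ r^{\times n}$ for a unique continuous $\tilde f\colon Z^n_q\to\Z$.

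Step two exploits the connected fibres. The projection $\pi\colon Z^n_q\to Y$, $(z_1,\dots,z_n)\mapsto q(z_1)$, has fibre $(q^{-1}(y))^n$ over $y\in Y$, a finite product of connected spaces and hence connected. A continuous map $\tilde f\colon Z^n_q\to\Z$ is necessarily locally constant, therefore constant on each connected fibre, so $\tilde f=g\circ\pi$ for some continuous $g\colon Y\to\Z$. The intertwining $\pi\circ R^{\times n}=S\circ\pi$ together with invariance of $\tilde f$ yields $S$-invariance of $g$, whence $g$ is constant by minimality of $(Y,S)$. Combining the two steps, $\CI(X^n_p)=\Z$ for every $n\ge 0$.

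Finally, under these identifications, $d_n\colon\CI(X^n_p)\to\CI(X^{n+1}_p)$ sends a constant $c$ to $\bigl(\sum_{j=1}^{n+1}(-1)^{j+1}\bigr)c$, which equals $c$ for $n$ even and $0$ for $n$ odd. The resulting complex
$$
\Z\overset{\id}{\to}\Z\overset{0}{\to}\Z\overset{\id}{\to}\Z\overset{0}{\to}\cdots
$$
is exact, and Proposition~\ref{prop:chasing} then gives $H^n(X\mid Y)=\ker(d_{n+3})/\range(d_{n+2})=0$ for every $n\ge 0$. The substantive step is the first reduction, where proximality is essential; the second is a topological triviality and the third is pure bookkeeping.
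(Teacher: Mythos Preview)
Your proof is correct and follows essentially the same approach as the paper's: first reduce $\CI(X^n_p)$ to $\CI(Z^n_q)$ via the proximality argument of Proposition~\ref{HnXZ}, then use connectedness of the fibres of $q_n$ together with local constancy to descend to $Y$ and invoke minimality, concluding $\CI(X^n_p)=\Z$ for all $n$. Your final step making the resulting chain complex explicit is exactly the computation the paper leaves implicit (it was already spelled out in the proof of Theorem~\ref{th:proximal}).
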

This result applies
in the case that $p$ is a compact connected group extension, or more generally 
an isometric extension with connected fibers.
In particular, taking $Y$ to be the trivial one point system,
we conclude that $H^n(X)=0$ for every Cantor minimal system $(X,T)$
which is an almost one-to-one extension (or more generally a proximal
extension) of a compact connected monothetic group.

\begin{proof}
In fact we will show that  
$\CI(X_p^n) = \Z$ for every $n \geq 0$.
Since, by Proposition \ref{prop:chasing}, $H^n(X\mid Y)\cong
{\ker(d_{n+3})}/{\range(d_{n+2})}$, this is an even stronger result.

By exactly the same method as in the proof of Proposition~\ref{HnXZ}, the map 
$r^{\times n*}\colon\allowbreak\CI(Z^n_q)\to\CI(X^n_p)$ is bijective. 

 On the other hand, all the fibers of the projection $q_n\colon Z^n_\sigma\to Y$ are connected because $q_n^{-1}\{y\}=(q^{-1}\{y\})^n$ for every $y\in Y$.
  Since $f$ is locally constant, it is constant on each fiber  $q_n^{-1}\{y\}$, and thus $f=g\circ q_n$ for some function $g\in\CC(Y)$.
 Since $f$ is invariant under $R^{\times n}$, $g$ is invariant under $S$ and, since $(Y,S)$ is minimal,
 $g$ is constant. Therefore, $f$ is constant.
 
 We conclude that every function in $\CI(X^n_p)$ is constant, and this group is equal to $\Z$.
\end{proof}

\section{The case of a finite or Cantor isometric extension}
\label{sec:isometric}
\subsection{Preliminaries}

The notion of an isometric extension  was introduced by
Furstenberg \cite{F}. It was shown there that an extension 
$p : (X,T) \to (Y,S)$ between minimal systems is isometric if and only if
it is an equicontinuous extension. In the case where $X$ and $Y$ are
a Cantor minimal systems it can be shown that an isometric extension has the following form:

\begin{definition}
\label{def:isom}
Let $(Y,S)$ be a Cantor minimal system, $G$ a finite (resp. Cantor)  group, $H$ a closed subgroup, $K=G/H$ endowed with the quotient topology and the left action of $G$, and $\sigma\colon Y\to G$ a continuous map, called a \emph{cocycle}. Then $X=Y\times K$ endowed with the transformation $T\colon (y,u)\mapsto (Sy\sigma(y)\cdot u)$ and with the projection $p\colon (y,u)\mapsto y$ on $Y$, is called a \emph{finite (resp. Cantor) isometric extension} of $(Y,S)$.
\end{definition}

Before we begin our study we introduce some reductions.  
First, the compact group $G$ admits a translation invariant distance, inducing a distance on $K=G/H$ which is invariant under the left action of $G$.
Substituting a quotient of $G$ for $G$ if needed, we can assume that $H$ does not contain any proper normal subgroup of $G$. Then $G$ can be considered as a subgroup of the compact group of isometries of the compact metric space $K$ and $H$ is the stabilizer of some element of $K$. 

This leads to an equivalent definition of a finite or Cantor isometric extension. 
We start with a finite or a Cantor space
 $K$, define $X=Y\times K$, and let $p\colon X\to Y$ to be the first projection. 
 Let $T\colon X\to X$ be a homeomorphism such that $(X,T)$ is minimal and  that $p\colon (X,T)\to(Y,S)$ is a factor map. Then $T$ has the form $T(y,u)=(Sy,\sigma(y)(u))$ where $\sigma(y)$ is a homeomorphism from $K$ onto itself. 
If $\sigma(y)$ is an isometry of $K$ for every $y\in Y$, then the extension $p$ is an isometric extension. Indeed, we define 
 $G$ to be the group of isometries of $K$, endowed with the topology of uniform convergence. It is a finite or a Cantor group. The minimality of $(X,T)$ implies that $G$ acts transitively on $K$. Then, setting $H$ to be the stabilizer of some point of $K$, we can identify $K$ with $G/H$ and let $(X,T)$ be the isometric extension defined as above.

For simplicity we restrict ourselves now to the case of finite extensions, and explain shortly later the changes needed for the Cantor case.
\begin{remark}
Let $p\colon (X,T)\to(Y,S)$ be an extension between Cantor minimal systems. Then it can be shown that it is a finite isometric extension if and only if the map $p$ is finite to one and open. We do not use this fact in the sequel.
\end{remark}

Let $p\colon  (X,T)\to(Y,S)$ be a finite isometric extension between  Cantor minimal systems and let $G,H,K,\sigma$ be  as in Definition~\ref{def:isom}.

We will replace the group $G$ by the \emph{Mackey group} of the cocycle (see~\cite{LM} and~\cite{FW2}),
whose definition we next recall. 
For every  integer $k$, the transformation $T^k$ of $X$ is given by
$$
T^k(y,u)=(S^ky,\sigma^{(k)}(y)\cdot u)$$
where $\sigma^{(0)}=1$,
$$
\sigma^{(k)}(y)=\sigma(S^{k-1}y).\cdots.\sigma(Sy).\sigma(y)\text{ for }k>0,\
$$
and a similar formula for $k<0$. Fix a point $y_0\in Y$ and let $M$ be the set of limit points of the sequences $(\sigma^{(k_i)}(y_0)\colon i\geq 1)$ for all sequences $(k_i\colon i\geq 1)$ of integers such that $S^{k_i}y_0$ converges to $y_0$. Then $M$ is a subgroup of $G$, called the Mackey group of $\sigma$. Although this definition of $M$ depends on $y_0$, it can be shown that the Mackey groups corresponding to different points of $Y$ are conjugate. The minimality of $(X,T)$ implies that $M$ acts transitively on $K$. Moreover, the cocycle $\sigma$ is cohomologous to an $M$-valued cocycle $\tau$. This means that there exist a continuous map $\tau\colon Y\to M$ and a continuous map $\phi\colon Y\to G$, such that
 $$
\tau(y)=\phi(Sy)^{-1}\,\sigma(y)\,\phi(y)\text{ for every }y\in Y\ .
$$
By an obvious change of variables, we can replace the cocycle $\sigma$ by $\tau$, that is, we can assume that $\sigma$ takes its values in the Mackey group $M$. Substituting $M$ for $G$ and $M\cap H$ for $H$, we reduce to the case that $G$ is equal to $M$. From the definition of this group, we get
\begin{itemize}
\item[(*)]
\it
For every $y\in Y$, every neighborhood $U$ of $y$ in $Y$, and every $g\in G$,  there exists an integer $k$ such that $T^ky\in U$ and $\sigma^{(k)}(y)=g$.
\end{itemize} 

We consider now the case of a Cantor isometric extension between Cantor minimal systems.  As above, we can assume that $G$ is a closed subgroup of the compact group of isometries of $K$. The Mackey group $M$ is defined as in the finite case, and in this case we can also assume that the cocycle $\sigma$ takes its values in $M$, and  replace $G$ by $M$.
 The property~(*) is replaced by the more general property:
\begin{itemize}
\item[(**)]
\it 
For every $y\in Y$, every neighborhood $U$ of $y$ in $Y$, and every non empty open subset  $W$ of $G$,   there exists an integer $k$ such that $T^ky\in U$ and $\sigma^{(k)}(y)\in W$.
\end{itemize} 

\subsection{Invariants of $X^n_p$ and invariants of $K^n=(G/H)^n$}\strut

In this Section, $p\colon(X,T)\to(Y,S)$ is a finite of Cantor isometric extension between Cantor minimal systems. We use the notation of Definition~\ref{def:isom}, the reductions made in the preceding section, and we further assume that  the property~(**) holds.

For every $n\geq 1$, let $K^n=(G/H)^n$ be endowed with the diagonal left action of $G$:
$$
\text{For }h\in G\text{ and }\bu=(u_1,\dots,u_n)\in K^n,\  
h\cdot\bu=(h\cdot u_1,\cdots,h\cdot u_n)\ .
$$
 We write $\CI(K^n)$ for the subgroup of $\CC(K^n)$ consisting in functions invariant under this action.

Recall that for every $n\geq 1$, $X^n_p$ can be identified with $Y\times K^n$ so that $p_n\colon X^n_p\to Y$ is the first coordinate projection. Under this identification, the transformation $T^{\times n}$ of $X^n_p$ has the form 
$$
(y,\bu)=(y,u_1,\dots,u_n)\mapsto (Sy,\sigma(y)\cdot\bu)=(Sy,\sigma(y)u_1,\dots,\sigma(y)u_n)\ .
$$

\begin{lemma}
\label{lem:invar}
We keep using the same notation.
Let $y\in Y$. 
Then the restriction map $\rho_n\colon \CC(X^n_p)\to \CC(p_n\inv\{y\})$,
when the latter is identified with $\CC(K^n)$, induces a bijection from $\CI(X^n_p)$ onto $\CI(K^n)$.
\end{lemma}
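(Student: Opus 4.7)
The plan is to establish three things separately: that the restriction map $\rho_n$ sends $\CI(X^n_p)$ into $\CI(K^n)$, that the resulting map is injective, and that it is surjective. Surjectivity and injectivity will be the easier halves, depending only on minimality, compactness of $G$, and continuity of $f$ and of the $G$-action; the real content of the lemma sits in the invariance statement, where the Mackey-group reduction and property~(**) will come in.

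For surjectivity, given $f_0\in\CI(K^n)$ I will simply pull back: under the identification $X^n_p\cong Y\times K^n$, set $F(y',\bu')=f_0(\bu')$. The $G$-invariance of $f_0$ yields
$$
F(T^{\times n}(y',\bu'))=F(Sy',\sigma(y')\cdot\bu')=f_0(\sigma(y')\cdot\bu')=f_0(\bu'),
$$
so $F\in\CI(X^n_p)$ and manifestly $\rho_n(F)=f_0$. For injectivity, suppose $f\in\CI(X^n_p)$ restricts to $0$ on $p_n^{-1}\{y\}$ and fix any target point $(y',\bu')\in X^n_p$. Minimality of $(Y,S)$ provides a sequence $(k_j)$ with $S^{k_j}y\to y'$, and compactness of $G$ lets me pass to a subsequence so that $\sigma^{(k_j)}(y)\to g_\infty$ for some $g_\infty\in G$. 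Taking $\bu=g_\infty^{-1}\cdot\bu'\in K^n$, the $T^{\times n}$-invariance of $f$ yields $0=f(y,\bu)=f(S^{k_j}y,\sigma^{(k_j)}(y)\cdot\bu)$, and continuity of $f$ together with continuity of the $G$-action makes the right-hand side converge to $f(y',g_\infty\cdot\bu)=f(y',\bu')$, forcing this value to be $0$.

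The main obstacle is the invariance claim: that $\rho_n(f)\in\CI(K^n)$ for every $f\in\CI(X^n_p)$. Given such an $f$, $g\in G$, and $\bu\in K^n$, I need to show $f(y,g\cdot\bu)=f(y,\bu)$. Since $f$ is integer-valued and continuous, hence locally constant, and since the action of $G$ on $K^n$ is jointly continuous, I can pick a clopen neighborhood $U$ of $y$ in $Y$ and an open neighborhood $W$ of $g$ in $G$ so that $f(y'',h\cdot\bu)=f(y,g\cdot\bu)$ for every $(y'',h)\in U\times W$. This is precisely where property~(**) pays off: it produces an integer $k$ with $S^k y\in U$ and $\sigma^{(k)}(y)\in W$, and then $T^{\times n}$-invariance reads
$$
f(y,\bu)=f(S^k y,\sigma^{(k)}(y)\cdot\bu)=f(y,g\cdot\bu),
$$
as required. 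The delicacy here is that only the Mackey-group reduction guarantees enough density of the values $\sigma^{(k)}(y)$ along returns of the orbit of $y$ near itself; without this refinement, invariance on the fibre $p_n^{-1}\{y\}$ would in general correspond to invariance under a proper subgroup of $G$, and the lemma would fail.
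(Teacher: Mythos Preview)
Your proof is correct and follows essentially the same three-part strategy as the paper's own argument: use property~(**) and local constancy for the invariance claim, pull back a $G$-invariant function for surjectivity, and use minimality for injectivity. The only cosmetic difference is that your injectivity argument is more explicit than necessary---the paper simply observes that $T^{\times n}$-invariance forces $f$ to vanish on each fibre $p_n^{-1}\{S^ky\}$, whence $f\equiv 0$ by minimality and local constancy---but this is not a genuine departure.
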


\begin{proof}

First we show that $\rho_n$ maps $\CI(X^n_p)$ to $\CI(K^n)$. Let $f\in\CI(X^n_p)$ and $h\in G$.
Since $f$ is locally constant, there exists a neighborhood $U$ of $y$  and a neighborhood $W$ of the unit $e_G$ of $G$ such that
$f(z,w\cdot\bu)=f(y, \bu)$ for every $z\in U$, every $\bu\in K^n$  and every $w\in W$.
By the property~(**) above, there exists an integer $k\geq 1$ with 
$S^ky\in U$ and $\sigma^{(k)}(y)\in Wh$.
Chose $k$ with this property and write $\sigma^{(k)}(y)=wh$ with $w\in W$. For every
 $\bu\in K^n$, by invariance of $f$  we have
 \begin{multline*}
f(y,\bu)=f(T^k(y,\bu))=f(S^ky, \sigma^{(k)}(y)\cdot\bu)
=f(S^ky,wh\cdot\bu)=f(y,h\cdot\bu)
\end{multline*}
because $S^ky\in U$ and $w\in W$.
Therefore, the restriction of $f$ to $p_n\inv\{y\}$ belongs to $\CI(K^n)$.

We check now that the restriction map $\rho_n\colon \CI(X^n_p)\to\CI(K^n)$ is onto. Let $\phi\in\CI(K^n)$ and define $f\in\CC(X^n_p)$ by $f(z,\bu)=\phi(\bu)$ for every $(z,\bu)\in X^n_\sigma$. For every $(z,\bu)\in X^n_\sigma$, 
$f(T(z,\bu))=f(Sz,\sigma(z)\cdot\bu)= \phi(\sigma(z)\cdot\bu)=\phi(\bu)=f(z,\bu)$. Therefore $f\in\CI(X^n_p)$ and clearly we have  $\rho_nf=\phi$.

Next let us check that the restriction map $\rho_n\colon \CI(X^n_p)\to\CI(K^n)$ is
one-to-one.
Let $f\in\CI(X^n_p)$ be such that $\rho_nf=0$. This means that the restriction of $f$ to $p_n\inv\{y\}$ is equal to $0$. By invariance, for every $k$ the restriction of $f$ to $p_n\inv\{S^ky\}$ is also equal to $0$. Therefore, $f$ is equal to zero everywhere by minimality and continuity.
\end{proof}

\begin{corollary}
\label{cor:isom}
Let $p\colon (X,T)\to(Y,S)$ be a Cantor or finite isometric extension. As above, we write $X=Y\times K$ with $K=G/H$ and we use the preceding reductions. For every $n$,
let $d_n\colon \CI(U^n)\to\CI(K^{n+1})$ be the restriction to $\CI(K^n)$ of the simplicial differential $\partial_n\colon\CC(K^n)\to\CC(K^{n+1})$ defined by~\eqref{eq:simplicial}.

Then $H^*(X\mid Y)$ is the homology of the chain
\begin{equation}
\label{eq:CI}
0\to\CI(K)\overset{d_1}{\rightarrow}  \CI(K^2) 
 \overset{d_2}{\rightarrow}  \CI(K^3)
    \overset{d_3}{\rightarrow}\cdots
\end{equation}
More precisely,
$$
H^n(X\mid Y)\cong\frac {\ker(d_{n+3})}{\range(d_{n+2})} \ .
$$
\end{corollary}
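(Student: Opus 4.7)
The plan is to reduce the statement to Proposition~\ref{prop:chasing} by identifying the chain complex $\bigl(\CI(X^n_p), d_n\bigr)_{n\geq 1}$ with the complex $\bigl(\CI(K^n), d_n\bigr)_{n\geq 1}$. The heavy lifting has already been done: for each $n\geq 1$, Lemma~\ref{lem:invar} provides a bijection $\rho_n\colon \CI(X^n_p) \to \CI(K^n)$ given by restriction to the fiber $p_n^{-1}\{y\} \cong K^n$ for a fixed (arbitrary) $y\in Y$. What remains is to verify that these bijections intertwine the two copies of the simplicial differential, so that they assemble into an isomorphism of chain complexes.

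First I would observe that $\partial_n\colon \CC(K^n)\to\CC(K^{n+1})$ does send $\CI(K^n)$ into $\CI(K^{n+1})$: since $G$ acts diagonally on both factors, the alternating sum~\eqref{eq:simplicial} is manifestly $G$-equivariant, so the maps $d_n$ in the statement are well defined. Next I would check the commutation $\rho_{n+1}\circ d_n = d_n\circ \rho_n$. The proof of Lemma~\ref{lem:invar} shows in fact that, under the identification $X^n_p = Y\times K^n$, every $f\in\CI(X^n_p)$ is independent of the first coordinate: writing $\phi=\rho_n f$, one has $f(z,\bu)=\phi(\bu)$ for all $z\in Y$ and $\bu\in K^n$. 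Substituting this into the formula for $\partial_n f$ on $X^{n+1}_p=Y\times K^{n+1}$ gives
$$
\partial_n f(z,u_1,\dots,u_{n+1}) = \sum_{j=1}^{n+1} (-1)^{j+1}\phi(u_1,\dots,\widehat{u_j},\dots,u_{n+1}) = \partial_n\phi(u_1,\dots,u_{n+1}),
$$
which is independent of $z$ and equals $d_n\phi$ on the fiber.

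Consequently the two chain complexes are isomorphic, so their homologies agree in every degree. Feeding this isomorphism into the formula $H^n(X\mid Y)\cong \ker(d_{n+3})/\range(d_{n+2})$ of Proposition~\ref{prop:chasing} yields the explicit description asserted. There is no serious obstacle in the argument: the substantive content---namely that $T^{\times n}$-invariant continuous functions on $X^n_p$ are determined by their restriction to a single fiber---is already packaged in Lemma~\ref{lem:invar} through the Mackey group reduction and property~(**), and the compatibility of differentials is a direct inspection of the combinatorial formula.
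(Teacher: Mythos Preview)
Your proposal is correct and follows the same approach as the paper, which simply writes ``Immediate by Proposition~\ref{prop:chasing} and Lemma~\ref{lem:invar}.'' You have merely spelled out the compatibility $\rho_{n+1}\circ d_n = d_n\circ\rho_n$ that the paper leaves implicit; your observation that the bijectivity of $\rho_n$ forces every $f\in\CI(X^n_p)$ to be independent of the $Y$-coordinate (via the explicit inverse constructed in the surjectivity part of Lemma~\ref{lem:invar}) is the right way to see this.
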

\begin{proof}[Proof of Corollary~\ref{cor:isom}]
Immediate by Proposition~\ref{prop:chasing} and Lemma~\ref{lem:invar}.
\end{proof}

\subsection{Torsion of the groups $H^n(X\mid Y)$ in the isometric case}
\label{subsec:torsion_isometric}
\begin{theorem}
\label{th:tosion_Isometric}
Let $p\colon (X,T)\to(Y,S)$ be an extension between Cantor minimal systems.

\begin{enumerate}
\item
If $p$ is a finite isometric extension then for all $n$ the group $H^n(X\mid Y)$ is finite.
\item
If $p$ is a Cantor isometric extension then for all $n$ the group $H^n(X\mid Y)$ is a torsion group.
\end{enumerate}
\end{theorem}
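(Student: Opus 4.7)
The plan is to reduce, via Corollary~\ref{cor:isom}, to computing the cohomology of the complex
$$
0 \to \CI(K) \overset{d_1}{\longrightarrow} \CI(K^2) \overset{d_2}{\longrightarrow} \CI(K^3) \overset{d_3}{\longrightarrow} \cdots
$$
of $G$-invariant continuous $\Z$-valued functions on $K^n=(G/H)^n$ with the simplicial differentials of~\eqref{eq:simplicial}.

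For the finite case~(1), I would apply Proposition~\ref{prop:exact} with $X=K$ and $Y$ a one-point space (the proof there goes through verbatim, since $\Z$-valued continuous functions are locally constant whether $K$ is Cantor or finite) to obtain an exact augmented sequence
$$
0 \to \Z \to \CC(K) \overset{\partial_1}{\longrightarrow} \CC(K^2) \overset{\partial_2}{\longrightarrow} \CC(K^3) \to \cdots\ .
$$
Tensoring with $\Q$ preserves exactness, and because $G$ is finite the averaging operator $e_G=|G|^{-1}\sum_{g\in G}g$ is a $\Q$-linear projection of $\CC(K^n)_\Q$ onto $\CI(K^n)_\Q$ that commutes with the $\partial_n$, splitting off the $G$-invariant subcomplex as a direct summand. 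Consequently the invariant complex $0\to\Q\to\CI(K)_\Q\to\CI(K^2)_\Q\to\cdots$ is exact, so $H^n(X\mid Y)\otimes\Q=0$ for all $n\geq 0$. Since $K$ is finite, each $\CI(K^n)$ is free abelian of finite rank (one generator per $G$-orbit on $K^n$), so $H^n(X\mid Y)$ is a finitely generated torsion group, and therefore finite.

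For the Cantor case~(2), a compact metric totally disconnected group is profinite, so I would write $G=\varprojlim_i G_i$ with $\pi_i\colon G\to G_i$ finite quotients, and set $H_i=\pi_i(H)$ and $K_i=G_i/H_i$, so that $K=\varprojlim_i K_i$. Every $f\in\CC(K^n)$ is locally constant and, by compactness, factors through some $K_i^n$; $G$-invariance of $f$ descends to $G_i$-invariance of its factorization. Hence
$$
\CI(K^n)\;=\;\varinjlim_i\CI(K_i^n),
$$
compatibly with the simplicial differentials, and since cohomology commutes with filtered direct limits,
$$
H^n(X\mid Y)\;\cong\;\varinjlim_i H^n\bigl(\CI(K_i^\bullet)\bigr).
$$
Each term on the right is finite by part~(1), applied purely algebraically to the finite $G_i$-set $K_i$; a direct limit of finite abelian groups is a torsion group, yielding~(2).

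The main obstacle is the bookkeeping: one must verify that the profinite decomposition $\CI(K^n)=\varinjlim_i\CI(K_i^n)$ is compatible with the differentials and the inverse system of group actions for a cofinal choice of quotients, and one has to state the proof of~(1) as a purely algebraic assertion about finite $G$-sets so that it applies to the finite levels $K_i$ without reference to an ambient dynamical system. Once these identifications are set up, the argument is a combination of an averaging argument at the finite level with a direct limit at the profinite level.
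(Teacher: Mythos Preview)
Your proposal is correct and follows essentially the same route as the paper. Both reduce via Corollary~\ref{cor:isom} to the invariant complex on $K^\bullet$, use the exactness from Proposition~\ref{prop:exact} together with the averaging operator over the finite group to force torsion (the paper does this integrally, showing $|G|\cdot[f]=0$ by averaging a lift of $f$; you do the equivalent thing rationally via the idempotent $e_G$), and then handle the Cantor case by writing $G$ as an inverse limit of finite quotients and passing to the direct limit in cohomology.
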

\begin{proof}
We continue using the same notation and conventions as above. 
We assume first that the extension is finite and write $K=G/H$, where $G$ is a finite group.

For every $n$, let $d_n\colon \CI(K^n)\to\CI(K^{n+1})$ be defined as in Corollary~\ref{cor:isom}.
We show that $H^n(X\mid Y)\cong \ker(d_{n+3})/\range(d_{n+2})$ is a finite group. 

Let $f\in\ker(d_{n+3})$ and $[f]$ its class in $\ker(d_{n+3})/\range(d_{n+2})$. 
 By Lemma~5.4 there exists $h\in\CC(K^{n+2})$ with $\partial_{n+2}h=f$.
For every $g\in G$ define the function $h_g\in\CC(K^{n+2})$ by 
$$
h_g(u_1,\dots,u_{n+2})=h(g\cdot u_1,\dots,g\cdot u_{n+2})\ .
$$
We have
\begin{align*}
\partial_{n+2}h_g(u_1,\dots,u_{n+3})
&=\sum_{j=1}^{n+3}(-1)^jh_g(u_1,\dots,\widehat{u_j},\dots,u_{n+3})\\
&:=\sum_{j=1}^{n+3}(-1)^jh_g(u_1,\dots,u_{j-1},u_{j+1},\dots,u_{n+3})\\
&=\sum_{j=1}^{n+3}(-1)^jh(g\cdot u_1,\dots,g\cdot u_{j-1},g\cdot u_{j+1},\dots,g\cdot u_{n+3})\\
&=\sum_{j=1}^{n+3}(-1)^jh(g\cdot u_1,\dots,\widehat{g\cdot u_j},\dots,g\cdot u_{n+3})\\
&=\partial_{n+2} h(g\cdot u_1,\dots,g\cdot u_{n+3})\\
&=f(g\cdot u_1,\dots,g\cdot u_{n+3})=
f(u_1,\dots,u_{n+3})
\end{align*}
because $f\in\CI(K^{n+3})$. We define
$$
q(u_1,\dots,u_{n+2}):=\sum_{g\in G} h_g(u_1,\dots,u_{n+2})
$$
and we have 
$$
\partial_{n+2}q(g_1,\dots,g_{n+3})=|G| \cdot f(g_1,\dots,g_{n+3})
$$
and, for every $v\in G$,
\begin{multline*}
q(v\cdot u_1,\dots, v\cdot u_{n+2})=\sum_{g\in G} h_g(v\cdot u_1,\dots,v\cdot u_{n+2})=
\sum_{g\in G}h(gv\cdot u_1,\dots,gv\cdot u_{n+2})\\
=\sum_{w\in G}h(w\cdot u_1,\dots,w\cdot u_{n+2})=q(u_1,\dots,u_{n+2})
\end{multline*}
and thus
$q\in\CI(G^{n-1})$. We conclude that
$$
|G|\cdot[f]=0\ .
$$

In particular, $H^n(X\mid Y)$ is a torsion group. On the other hand, since $K$ is finite, $\CC(K^{n+3})$ is a finitely generated free abelian group, therefore its subgroup $\ker(d_{n+3})$ is also a finitely generated free abelian group, and
$H^n(X\mid Y)\cong \ker(d_{n+3})/\range(d_{n+2})$ is a finitely generated abelian group. As it is 
a torsion group, it is finite.

We consider now the case of a Cantor isometric extension. We have $K=G/H$ where $G$ is a Cantor group. 
First we check that $G$ is an inverse limit of finite groups.

We claim that any  neighborhood $U$ of the unit $e_G$ in $G$ contains an open normal subgroup $R$ of $G$. Since $G$ is a Cantor group, $U$ contains a clopen neighborhood $U'$ of  $e_G$.
 Since $U'$ is compact and open, there exists a symmetrical neighborhood $V$ of $e_G$ in $G$ with $U'V\subset U'$. If $Q$ is the group spanned by $V$, then $Q$ is an open subgroup of $G$, included in $U'$ and thus in $U$.
 By 
  compactness, $Q$ has a finite index in $G$. We chose a finite set $\{g_1,\dots,g_k\}$ in $G$ such that $G$ is the union of the  cosets $g_iQ$, and define $R$ to be the intersection of the sets $g_iQg_i^{-1}$. Then $R$ satisfies the announced properties.
 
Let $(U_j)$ be a decreasing sequence of  neighborhoods of $e_G$ in $G$, with $\cap_jU_j=\{e_G\}$.
For every $j$, we chose an open normal subgroup $R_j$ included in $U_j$, and we modify these subgroups so that the sequence $(R_j)$ is decreasing. Then all the groups $G/R_j$ are finite and $G$ is their inverse limit.

Now, the Cantor extension $X=Y\times G/H$ of $Y$ is the inverse limit of the finite extensions $X_j=Y\times(G/HR_j)$, defined by the reduction of the cocycle $\sigma$ modulo $R_j$. We have 
shown that for every $n$, $H^n(X\mid Y)$ is the direct limit of the groups 
$H^n(X_j\mid Y)$ and thus is a torsion group.
  \end{proof}
  
\subsection{The case of an extension by a finite group}
\label{subsec:finite-groups}
\label{subsec:ext_finite_group}
\begin{theorem}
\label{th:groupe}
Let $(Y,S)$ be a Cantor minimal system and let $(X,T)$ be a minimal extension of $(Y,S)$ by a finite group $G$. Then, for every $n\geq 0$,
$$
H^n(X\mid Y)\cong H^{n+2}(G)\ .
$$
\end{theorem}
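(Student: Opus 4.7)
Since $(X,T)\to(Y,S)$ is a group extension by the finite group $G$, we have $K=G$ and stabilizer $H=\{e\}$. Minimality of $(X,T)$ forces the Mackey group to act transitively on $K=G$ by left translation, and only $G$ itself acts transitively on $G$ by translation, so the Mackey group equals $G$ and the reductions of Section~\ref{sec:isometric} apply. Corollary~\ref{cor:isom} then delivers
$$
H^n(X\mid Y)\cong\frac{\ker(d_{n+3})}{\range(d_{n+2})},
$$
where $d_m\colon \CI(G^m)\to\CI(G^{m+1})$ is the simplicial coboundary~\eqref{eq:simplicial} and $G$ acts diagonally on $G^m$ by left translation.

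The key step is to identify the complex $\bigl(\CI(G^\bullet),d_\bullet\bigr)$ with the standard cochain complex computing group cohomology $H^*(G,\Z)$ with trivial integer coefficients. For each $m\geq 1$ the $\Z G$-module $\Z[G^m]$ under the diagonal left action is free: the map
$$
(g_0,\dots,g_{m-1})\longmapsto g_0\otimes (g_0\inv g_1,\dots,g_0\inv g_{m-1})
$$
is a $\Z G$-equivariant isomorphism $\Z[G^m]\cong \Z G\otimes_\Z\Z[G^{m-1}]$. Equipped with the simplicial face maps, $(\Z[G^\bullet])_{\bullet\geq 1}$ is precisely the homogeneous bar resolution of $\Z$. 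Applying $\hom_{\Z G}(-,\Z)$ identifies $\hom_{\Z G}(\Z[G^m],\Z)$ with $\CI(G^m)$ and sends the induced coboundary to $d_m$. Hence
$$
H^m(G)\cong \frac{\ker(d_{m+1})}{\range(d_m)}.
$$

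Substituting $m=n+2$ yields $H^n(X\mid Y)\cong H^{n+2}(G)$, as claimed. The nontrivial content has already been packaged into Corollary~\ref{cor:isom}; the remaining work is the standard identification of the diagonal-invariant simplicial complex with the bar cochain complex, together with the degree bookkeeping that produces the $+2$ shift. The only mildly delicate point is that the complex in the corollary starts in degree corresponding to $\CI(G)$ (rather than to $\Z$ in the augmented bar complex), which together with the displacement built into Proposition~\ref{prop:chasing} accounts for the index offset.
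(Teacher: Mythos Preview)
Your proof is correct and follows essentially the same approach as the paper's own proof: both apply Corollary~\ref{cor:isom} with $K=G$ and then identify the complex $\bigl(\CI(G^\bullet),d_\bullet\bigr)$ with the cochain complex of the homogeneous bar resolution, yielding $H^m(G)\cong\ker(d_{m+1})/\range(d_m)$ and hence the $+2$ shift. Your explicit remark that the Mackey group must be all of $G$ (since only $G$ acts transitively on itself by left translation) is a helpful justification that the paper leaves implicit.
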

\begin{proof}
Consider the so called ``bar resolution of $\Z$ in homogeneous form'':
$$
\cdots\rightarrow \overline{B}_n
\overset{\delta_n}{\rightarrow}
\overline{B}_{n-1}    \rightarrow\cdots\rightarrow 
\overline{B}_1
\overset{\delta_1}{\rightarrow}
\overline{B}_0
\overset{\epsilon}{\rightarrow}\Z\to 0
$$
where, for every $n\geq 0$, $\overline B_n$ is the free abelian group on the set of $(n+1)$-tuples of elements of $G$, endowed with the left diagonal action of $G$:
$$
h\cdot(g_1,g_2,\dots,g_{n+1})=(hg_1,hg_2,\dots,hg_{n+1})\ .
$$
$\delta_n\colon\overline B_n\to\overline B_{n-1}$ is the usual simplicial boundary map:
$$
\delta_n(y_1,y_2,\dots,y_{n+1})=\sum_{j=1}^{n+1}(-1)^{j+1}(y_1,y_2,\dots,\widehat{y_j},\dots,y_{n+1})\ .
$$
$\epsilon\colon \overline B_0=\Z G\to\Z$ is the augmentation homomorphism, given by $\epsilon(g)=1$ for every $g\in G$. 

Now, the cohomology $H^*(G)$ is the homology of the chain
\begin{multline*}
0\rightarrow \hom_G(\overline{B}_0,\Z) 
\overset{\delta_1^*}{\rightarrow}
\hom_G(\overline{B}_1,\Z) \rightarrow\cdots\\
\rightarrow\hom_G(\overline{B}_{n-1},\Z)   
\overset{\delta_n^*}{\rightarrow}
 \hom_G(\overline{B}_n,\Z)\rightarrow\cdots
\end{multline*}

Since $\Z$ is endowed with the trivial action of $G$, for every $n\geq 0$ we can identify
$\hom_G(\overline B_n,\Z)$ with $\CI(G^{n+1})$ and the chain above with the chain
\begin{equation}
\label{eq:CIG}
0\to\CI(G)\overset{d_1}{\rightarrow}  \CI(G^2) 
 \overset{d_2}{\rightarrow}  \CI(G^3)
    \overset{d_3}{\rightarrow}\cdots
\end{equation}
which is a particular case of the chain~\eqref{eq:CI}.
Theorem~\ref{th:groupe} follows now immediately from
Corollary~\ref{cor:isom}.
\end{proof}
%%%
%%% Merged the comments and the remark

\begin{comments}
Given any finite group $G$ we can, e.g. by using Theorem \ref{thm:Toeplitz},
construct a minimal extension $p \colon (X,T) \to (Y,S)$ by the group $G$.
In this way we embed, via Theorem \ref{th:groupe},
the entire ``algebraic'' cohomology of the finite group $G$ in the
``dynamical cohomology'' of the extension $p$:
$$
H^n(X\mid Y)\cong H^{n+2}(G)\ .
$$
The maximal intermediate extension $(\zab,\rab)$ of $(Y,S)$ by an abelian group (see Section~\ref{subsec:inter_ab}) is an extension by 
the abelianized group $\gab:=G/G_2$ of $G$, where $G_2$ is the commutator subgroup of $G$. By Theorem~\ref{th:abel-max}, $\widehat{\gab}$ is the torsion subgroup of $K^0(X)/p^*K_0(Y)$, which is equal to the torsion subgroup of $H^0(X\mid Y)$ by Corollary~\ref{cor:torsion}. Finally, by Theorem~\ref{th:groupe}, 
$$
\widehat{\gab}=H^0(X\mid Y)=H^2(G)\ .
$$
For completeness, we give a purely algebraic proof of the equality $\widehat{\gab}=H^2(G)$
in Appendix~\ref{app:C}.

In particular, we recover here the fact that
$$\text{
$H^2(G)=0$ when $G$ is a finite simple non abelian group}
$$
or more generally a finite group with $G_2=G$. 
In Theorem \ref{simple} we have shown the existence of 
an extension by a simple finite non abelian group $G$ such that 
$K_0(X,T)/p^*K_0(Y,S)$ is not trivial (this group is torsion free by Theorem~\ref{th:finite-abel}). A stronger result would have been the existence of an extension by a simple finite non abelian group $G$ such that $H^0(X\mid Y)$ is not trivial. 
This approach however fails since
$H^2(G)$  always vanishes for such groups.
\end{comments}

\subsection{Back to the cyclic case}
It is well known that 
$$
H^n(\Z_k)=\begin{cases}
\Z_k & \text{if $n$ is even}\\
0 & \text{if $n$ is odd}
\end{cases}
$$
Therefore, if $(X,T)$ is a minimal extension of $(Y,S)$ by $\Z_k$ we have
$$H^0(X\mid Y)=H^2(\Z_k)=\Z_k$$
and, by Corollary~\ref{cor:torsion},
$$\torsion\bigl(K_0(X,T)/p^*K_0(Y,S)\bigr)=\Z_k\ .$$
This puts another light onto theorem~\ref{th:abel-max} and  Proposition~\ref{prop:cyclic}.

\begin{comments}
It would be interesting to have more information about the
non-torsion part of 
$K_0(X,T)/p^*K_0(Y,S)$ in the case of an extension by a finite group or, more generally, of a  finite or Cantor isometric extension.
\end{comments}

\subsection{The absolute case}
\label{subsec:abs2}
As in Section~\ref{subsec:abs1} and with the same notation, we consider the case that $Y$ is the trivial system consisting in a single point. For every $n$, we write $H^n(X)$ instead of $H^n(X\mid Y)$.

Let $r\colon (X,T)\to(\zrat,\rrat)$ be the factor map on the rational equicontinuous factor of $X$. It is not difficult to check that, for every $n$,  every invariant integer valued function on $X^n$ factorizes through $\zrat^n$, that is, can be written as 
$f\circ r ^{\times n}$ for some invariant function $f$   on $(\zrat^n,\rrat^{\times n})$.
By Proposition~\ref{prop:chasing}, the natural map $H^n(\zrat)\to H^n(X)$ is an isomorphism. Here, $H^n(\zrat)$ is interpreted in the dynamical sense, but by Theorem~\ref{th:groupe}, it is equal to $H^{n+2}(\zrat)$
 interpreted in the group theoretical sense: $\zrat$ is the inverse limit of a sequence finite cyclic groups $(\Z/n_i\Z)$, and $H^{n+2}(\zrat)$ is the direct limit of their cohomology. We get 
 $$
 H^n(X)=\begin{cases}
 \widehat{\zrat} =\torsion\bigl(K_0(X)/\Z\bigr) & \text{if   $n$ is even}\\
 0 & \text{if $n$ is odd.}
 \end{cases}
 $$

\section{The groups $H^n(X \mid Y)$ are torsion groups}
\label{sec:torsion}
%%%
%%% Moved the statement of the theorem here
In this Section we show:
\begin{theorem}
\label{thm:H=torsion}
Let 
$p\colon (X,T)\to (Y,S)$ be a factor map between minimal Cantor systems.
Let $(X,T) \overset{r} \to (\zisoc,\risoc) \overset{q}\to (Y,S)$, with $p = q \circ r$, be the largest intermediate Cantor isometric extension between $X$ and $Y$. Then:
\begin{enumerate}
\item
\label{it:HnXYZY}
For every $n \ge 0$, $H^n(X \mid Y)=H^n(Z \mid Y)$.
\item 
\label{it:HnTorsion}
For every $n \ge 0$,  $H^n(X \mid Y)$ is a torsion group.
\item
\label{it:HzeroTorsion}
$\displaystyle
H^0(X\mid Y)=\torsion\Bigl(\frac{K_0(X)}
{p^*K_0(Y)}\Bigr)\ .
$
\end{enumerate}
In particular, taking $Y=\{\text{pt}\}$ to be the trivial one point system, we have
$$
H^0(X)=\torsion(K_0(X)/\Z)\ ,
$$
where we identify $K_0(\text{pt})$ with $\Z$.
\end{theorem}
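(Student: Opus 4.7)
My plan is to reduce everything to the Cantor isometric case handled in Theorem~\ref{th:tosion_Isometric}, by showing that the extension $r\colon X\to \zisoc$ contributes nothing to the cohomology. Once part~\eqref{it:HnXYZY} is established, the remaining statements follow quickly. Indeed, part~\eqref{it:HnTorsion} is immediate: since $q\colon \zisoc\to Y$ is a Cantor isometric extension, Theorem~\ref{th:tosion_Isometric} gives that $H^n(\zisoc\mid Y)$ is a torsion group, and then so is $H^n(X\mid Y)$ by~\eqref{it:HnXYZY}. Part~\eqref{it:HzeroTorsion} then drops out of Corollary~\ref{cor:torsion}: that corollary identifies $\torsion(K_0(X)/p^*K_0(Y))$ with $\torsion(H^0(X\mid Y))$, and the latter is all of $H^0(X\mid Y)$ by~\eqref{it:HnTorsion}.

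The entire burden therefore lies in part~\eqref{it:HnXYZY}. The natural tool is Proposition~\ref{HnXZ}, which already yields the desired equality $H^n(X\mid Y)=H^n(\zisoc\mid Y)$ provided one knows that $r\colon X\to \zisoc$ is a proximal extension. The heart of the proof is therefore the following key claim: \emph{the extension $r\colon X\to\zisoc$ is proximal.}

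To establish the claim I would argue by contradiction, using the universal property that defines $\zisoc$. The object $\zisoc$ itself is first constructed as the supremum of the family of all intermediate Cantor isometric extensions of $(Y,S)$ below $(X,T)$, in complete analogy with the construction of $\zab$ in Theorem~\ref{th:abel-max}; the supremum is itself Cantor isometric because the join of two Cantor isometric extensions over $Y$ is realized inside the product of their structure groups, which remains a closed subgroup of a Cantor group. Now suppose $r$ were not proximal: then there exists a distal pair $(x_1,x_2)\in X_r^2$ with $x_1\neq x_2$ and $\inf_{k}d(T^kx_1,T^kx_2)>0$. Its $T^{\times 2}$-orbit closure would contain a minimal subsystem of $X_r^2$ sitting strictly above the diagonal copy of $\zisoc$. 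From this minimal subsystem one extracts a nontrivial intermediate extension $(\zisoc,\risoc)\subsetneq (Z',R')\subseteq (X,T)$ which is itself a Cantor isometric extension of $(\zisoc,\risoc)$, contradicting the maximality of $\zisoc$.

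\textbf{Main obstacle.} The delicate step is precisely this last extraction: turning a distal pair above $\zisoc$ into a genuine \emph{intermediate Cantor isometric extension} of $(\zisoc,\risoc)$. This is the point at which a structural input beyond what has appeared so far in the paper is needed, namely a topological analogue in the Cantor category of the Furstenberg--Ellis structure theorem, to the effect that the sole obstruction to a minimal extension being proximal is the presence of a nontrivial intermediate isometric sub-extension, and that such a sub-extension can be taken Cantor isometric in the sense of Definition~\ref{def:isom}. Granting that input, everything else is a direct application of results already proved in the preceding sections.
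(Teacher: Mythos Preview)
Your reductions of parts~\ref{it:HnTorsion} and~\ref{it:HzeroTorsion} to part~\ref{it:HnXYZY} are correct and coincide with the paper's argument.

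For part~\ref{it:HnXYZY} there is a genuine gap. You want to apply Proposition~\ref{HnXZ}, which requires $r\colon X\to\zisoc$ to be proximal, and you argue by contradiction: a distal pair over $\zisoc$ yields a nontrivial intermediate $Z'$ with $Z'\to\zisoc$ Cantor isometric, contradicting the maximality of $\zisoc$. But $\zisoc$ is maximal among intermediate extensions that are Cantor isometric \emph{over $Y$}; a Cantor isometric extension $Z'\to\zisoc$ contradicts nothing unless the composite $Z'\to Y$ is again Cantor isometric. Equicontinuous extensions are not closed under composition in general (any genuinely two-step distal tower illustrates this), so a Cantor-specific argument would be required here and you do not supply one. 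Moreover, the ``structural input'' you invoke---that non-proximality of a minimal extension forces a nontrivial intermediate isometric sub-extension---is not the content of the Furstenberg--Ellis theory: the relevant dichotomy is between isometric sub-extensions and \emph{relatively weakly mixing} extensions, and weakly mixing does not imply proximal. So even granting that input as stated, $r$ need not be proximal.

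The paper sidesteps this entirely. It does not claim that $r$ is proximal; instead it proves directly that every $f\in\CI(X^n_p)$ factors through $r^{\times n}$, and then applies Proposition~\ref{prop:chasing}. The tool is the regionally proximal relation $Q_p$ of Definition~\ref{def:QE}: Lemma~\ref{lem:Q} shows that any $f\in\CI(X^n_p)$ is unchanged when a single coordinate $x_i$ is replaced by $x_i'$ with $(x_i,x_i')\in Q_p$. Since $\ziso=X/E_p$ and $E_p$ is the smallest closed invariant equivalence relation containing $Q_p$ (Appendix~\ref{app:E}), $f$ factors through $\ziso$; being locally constant, it then factors through $\zisoc$ (Corollary~\ref{cor:Q}). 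This gives $\CI(X^n_p)\cong\CI((\zisoc)^n_q)$, and hence~\ref{it:HnXYZY}, without any proximality hypothesis on $r$.
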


We first recall the definition of the relative regionally proximal relation.

\begin{definition}\label{def:QE}
\begin{enumerate}
\item
Let $p\colon (X,T) \to (Y,S)$ be a factor map between minimal systems then the {\em relative regionally proximal relation $Q_p\subset X^2_p$} is  the set of pairs $(x,x')\in X_p^2$ such that there exist a sequence $(x_j,x'_j)\in X^2_p$ converging to $(x,x')$ and a sequence $(n_j)$ of integers such that 
$(T^{n_j}x_j,T^{n_j}x'_j)$ converges to $(x,x)$.
We remark that $Q_p$ is a closed invariant relation on $X$.
\item
The smallest invariant closed equivalence relation 
(icer) $R \subset X_p^2$
such that $X/R$ is an isometric extension of $Y$ is called {the relative
equicontinuous structure relation} and is denoted by $E_p$. Clearly
$Q_p \subset E_p$ and moreover $E_p$ is the least 
icer which contains $Q_p$. 
The latter assertion is well known (e.g. it follows directly 
from \cite[Corollary 10, Chapter 7]{A});
for completeness we supply a proof in Appendix \ref{app:E}.
\end{enumerate}
\end{definition}

We will need the following lemma concerning $Q_p$.

\begin{lemma}
\label{lem:Q}
Let $p\colon (X,T) \to (Y,S)$ be a factor map between Cantor minimal systems
Let $n \geq 0$ and $f \in \CI(X^n_p)$. If $(x_i,x_i') \in  Q_p$
then for every $(x_1,x_2,\dots,x_n) \in X^n_p$ we have
$f(x_1,x_2,\dots, x_i, \dots ,x_n)=
f(x_1,x_2,\dots,x'_i,\dots, x_n)$.
\end{lemma}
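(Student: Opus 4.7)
The plan is to reduce the statement to showing that the pair $(\bx,\bx')$, where $\bx=(x_1,\dots,x_i,\dots,x_n)$ and $\bx'=(x_1,\dots,x_i',\dots,x_n)$, lies in the regional proximal relation of the diagonal system $(X^n_p,T^{\times n})$. Once this is in hand, $f\in\CI(X^n_p)$ satisfies $f(\bx)=f(\bx')$ via the standard fact, already implicit in the Comments after Proposition~\ref{prop:chasing}, that a continuous $T^{\times n}$-invariant integer-valued function is constant on regionally proximal pairs. Indeed, given sequences $\bu_j\to\bx$ and $\bu_j'\to\bx'$ in $X^n_p$ and integers $n_j$ such that $(T^{\times n})^{n_j}\bu_j$ and $(T^{\times n})^{n_j}\bu_j'$ both converge to a common limit $\mathbf{w}\in X^n_p$, continuity and invariance of $f$ give
\[
f(\bx)=\lim_j f(\bu_j)=\lim_j f\bigl((T^{\times n})^{n_j}\bu_j\bigr)=f(\mathbf{w})=\lim_j f(\bu_j')=f(\bx').
\]

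To produce such sequences, I would apply the defining property of $(x_i,x_i')\in Q_p$: pick $(y_j,y_j')\in X^2_p$ with $y_j\to x_i$, $y_j'\to x_i'$, and integers $n_j$ such that $T^{n_j}y_j$ and $T^{n_j}y_j'$ both converge to some $z\in X$. I would then build $\bu_j,\bu_j'\in X^n_p$ by placing $y_j,y_j'$ respectively in the $i$-th coordinate, and in each remaining coordinate $k\neq i$ using a common point of the fiber $p^{-1}(p(y_j))$ chosen close to $x_k$. By construction $\bu_j\to\bx$, $\bu_j'\to\bx'$, and the iterates $(T^{\times n})^{n_j}\bu_j$ and $(T^{\times n})^{n_j}\bu_j'$ coincide in every coordinate except the $i$-th, where they differ by $d(T^{n_j}y_j,T^{n_j}y_j')\to 0$. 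Extracting a subsequence produces a common limit $\mathbf{w}\in X^n_p$, as required.

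The main obstacle is the lifting step: for each $j$ and each $k\neq i$, we need a point of $p^{-1}(p(y_j))$ close to $x_k$. This amounts to a form of lower semicontinuity of the fibers of $p$ at each $x_k$, which is not automatic for an arbitrary factor map between Cantor minimal systems. I would try to handle this by leveraging the Cantor topology of $X^n_p$ together with the local constancy of $f$ (and its uniform continuity on the compact $X^n_p$), replacing exact lifts with approximate ones that still lie in a clopen neighborhood of $\bx$ on which $f$ is constant; alternatively, one may refine the $Q_p$-sequence $(y_j,y_j')$ using minimality of $(Y,S)$ so that the needed lifts of the remaining coordinates are guaranteed to exist and to converge to the desired $x_k$'s.
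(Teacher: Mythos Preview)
Your overall strategy is correct and is essentially the same as the paper's: reduce to showing that $(\bx,\bx')$ is a regionally proximal pair for $(X^n_p,T^{\times n})$, then use that a locally constant invariant function is constant on such pairs. You also correctly pinpoint the one real difficulty, the lifting step: given $y_j\to x_i$ with $p(y_j)\to p(x_i)=p(x_k)$, one must produce points in the \emph{exact} fiber $p^{-1}(p(y_j))$ that converge to each $x_k$, $k\neq i$.

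However, your two proposed workarounds do not close this gap. Replacing ``exact lifts with approximate ones'' does not help: membership in $X^n_p$ is a hard constraint (all coordinates must share the same $p$-image), so an approximate lift that is merely close to $p^{-1}(p(y_j))$ yields a point outside $X^n_p$, and the argument collapses. Your second suggestion, ``refine the $Q_p$-sequence using minimality of $(Y,S)$'', is heading in the right direction but is not a proof as stated; minimality alone does not force fibers $p^{-1}(y)$ to vary continuously.

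The missing idea, which the paper supplies, is this. The set-valued map $y\mapsto p^{-1}\{y\}$ from $Y$ into $2^X$ is upper semicontinuous, hence admits a dense $G_\delta$ set $Y_c\subset Y$ of continuity points. If $p(x_i)=p(x_k)\in Y_c$, then continuity of $p^{-1}$ at that point \emph{does} allow one to choose $x_{k,j}\in p^{-1}(p(y_j))$ with $x_{k,j}\to x_k$, and your construction goes through verbatim. For the general case, one uses minimality of $(Y,S)$ to find iterates $T^{m_j}$ sending $(x_1,\dots,x_n)$ toward a tuple whose common $p$-image lies in $Y_c$; since $Q_p$ is closed and $T$-invariant, the limiting pair in the $i$-th coordinate is still in $Q_p$, and the special case applies there. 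Invariance of $f$ then transports the equality back. This two-step argument (continuity points of $p^{-1}$, then minimality plus invariance of $Q_p$) is exactly what is needed to make your sketch into a proof.
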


\begin{proof}
%%%
%%% Replaced i by j a lot of times
To keep the notation simple we assume $n=2$ and $i=1$.
The assumption $(x_1,x_1') \in  Q_p$ means that there exist a sequence 
$(x_{1,j},x'_{1,j})$ in $X^2_p$ converging to 
%%%
$(x_1,x_1')$
and a sequence $(n_j)$ of integers with
$(T^{n_j}x_{1,j},T^{n_j}x'_{1,j}) \to (w,w)$ for some point $w \in X$.

Now the set-valued map $p^{-1} \colon Y \to 2^X$, the latter being the 
compact metric space of closed 
subsets of $X$ equipped with the
Hausdorff distance, is an upper-semi-continuous map,
meaning that $\limsup 
%%%
p^{-1}\{y_k\} \subset p^{-1}\{y\}$,
whenever $y_k \to y$ in $Y$. 
Therefore it admits a dense $G_\delta$ subset
$Y_c \subset Y$ of continuity points (see e.g.~\cite{C}).

We first assume that the point $y_0=p(x_2)=p(x_1)$ is in $Y_c$.
Since $p(x_2)=p(x_1)$, $x_{1,j}\to x_1$ and $p(x_1)=y_0$ is a continuity point
for $p^{-1}$, 
we can choose a sequence $(x_{2,j})$ converging to $x_2 \in p^{-1}\{y_0\}$ 
such that $p(x_{2,j})= p(x_{1,j})$ and thus also $p(x_{2,j})=p(x'_{1,j})$ for every 
$j$, that is, $(x_{1,j}, x_{2,j})\in X^2_p$ and $(x'_{1,j}, x_{2,j})\in X^2_p$. 
We can further assume that the sequence $(T^{n_j}x_{2,j})$ converges to some point $z$. We now have
\begin{align*}
& X^2_p \ni (x_{1,j}, x_{2,j}) \to (x_1,x_2)\ ; \quad  
(T^{n_j}x_{1,j}, T^{n_j}x_{2,j}) \to (w,z)\ ;\\
& X^2_p \ni (x'_{1,j}, x_{2,j}) \to (x'_1,x_2)\ ; \quad  
(T^{n_j}x'_{1,j}, T^{n_j}x_{2,j}) \to (w,z)\ .
\end{align*}
Since $f$ is locally constant, there is an $j$ with
\begin{align*}
f(x_1, x_2) & =  f(x_{1,j}, x_{2,j}) = f(T^{n_j}x_{1,j}, T^{n_j}x_{2,j}) = f(w, z) \\
& = f(T^{n_j}x'_{1,j}, T^{n_j}x_{2,j}) = f(x'_{1,j}, x_{2,j})  = f(x'_1, x_2)\ .
\end{align*}

Finally if $p(x_1) = p(x_2) = y$ is not in $Y_c$, we pick a point $y_0 \in Y_c$
and a sequence $(m_j)$ such that $T^{m_j}y \to y_0$. 
%%%
Passing to a subsequence, we can assume that $T^{m_j}x_1\to\tilde x_1$, $ T^{m_j}x_2\to\tilde x_2$ and $T^{m_j}x'_1\to \tilde x_1'$ for some $\tilde x_1,\tilde x_2$ and $\tilde x'_1\in X$. Then
$$
(T^{m_j}x_1,T^{m_j}x_2) \to (\tilde{x}_1,\tilde{x}_2), \qquad
(T^{m_j}x'_1,T^{m_j}x_2) \to ({\tilde{x}_1}',\tilde{x}_2),
$$
$p(\tilde{x}_1)=p(\tilde{x}_2) = y_0$, and $(\tilde{x}_1,{\tilde{x}}'_1) \in Q_p$ since $Q_p$ is closed and invariant. As we have seen above this implies
that $f(\tilde{x}_1,\tilde{x}_2) = f({\tilde{x}_1}',\tilde{x}_2)$.
But then also 
$$
f(x_1,x_2) =f(\tilde{x}_1,\tilde{x}_2) = f({\tilde{x}_1}',\tilde{x}_2)
=f(x'_1,x_2)
$$ 
and the proof is complete.
\qed
\renewcommand{\qed}{}
\end{proof}

%%% 
%%% Added the reduction to the cantor isometric extension
\begin{corollary}\label{cor:Q}
$p\colon (X,T) \to (Y,S)$  be a factor map between Cantor minimal systems.
Let $n \geq 0$ and $f \in \CI(X^n_p)$. Let
$(X,T) \overset{r} \to (\zisoc,\risoc) \overset{q}\to (Y,S)$ be the largest intermediate Cantor isometric extension between $X$ and $Y$. Then $f$ factors through $\zisoc$, that is,
there exists a function $\tilde{f} \in \CI((\zisoc)_q^n)$ with $f = \tilde{f}
\circ r^{\times n}$. 
\end{corollary}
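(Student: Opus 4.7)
The plan is to show that $f$ is constant on the fibres of $r^{\times n}$; the sought $\tilde f$ is then the pushforward, and its continuity and $\risoc^{\times n}$-invariance are automatic. Since $\zisoc$ is the largest intermediate Cantor isometric extension, the icer $R_r := \{(a,a')\in X^2_p : r(a)=r(a')\}$ coincides with $E_p$: one inclusion $E_p\subset R_r$ is the minimality of $E_p$ among icers with isometric quotient (Definition~\ref{def:QE}); conversely, $X/E_p$ is itself a Cantor isometric extension of $Y$, hence dominated by $\zisoc$, giving $R_r\subset E_p$. Thus it suffices to prove that $f(\bx)=f(\bx')$ whenever $(x_i,x_i')\in E_p$ for every $i$.

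For each $i\in\{1,\dots,n\}$ introduce the \emph{indifference relation}
\[
R_i := \bigl\{(a,a')\in X^2_p : f(x_1,\dots,x_{i-1},a,x_{i+1},\dots,x_n)=f(x_1,\dots,x_{i-1},a',x_{i+1},\dots,x_n)\text{ for all valid spectators}\bigr\}.
\]
Reflexivity, symmetry, and transitivity of $R_i$ are direct, since the equality $p(a)=p(a')$ built into $X^2_p$ ensures that the swap of the $i$-th coordinate preserves membership in $X^n_p$; $T\times T$-invariance follows from the $T^{\times n}$-invariance of $f$ after shifting the spectators by $T^{-1}$. Closedness is the delicate step, proved along the lines of Lemma~\ref{lem:Q}: first verify the claim when $p(a)$ lies in the dense $G_\delta$ set $Y_c$ of continuity points of the set-valued map $p^{-1}\colon Y\to 2^X$ (where spectator tuples can be approximated via Hausdorff continuity of the fibres), and reduce the general case to this one by applying $T^{m_k}$ with $S^{m_k}p(a)\to y_0\in Y_c$, invoking minimality. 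Granting closedness, $R_i$ is an icer, so Lemma~\ref{lem:Q} yields $Q_p\subset R_i$; the defining property of $E_p$ then gives $E_p\subset R_i$ for every $i$.

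Given $\bx,\bx'\in X^n_p$ with $r^{\times n}(\bx)=r^{\times n}(\bx')$, we have $(x_i,x_i')\in R_r=E_p\subset R_i$ for every $i$. Change coordinates one at a time: each intermediate tuple remains in $X^n_p$ because $p(x_i)=p(x_i')$ (as $p=q\circ r$), and each swap preserves the value of $f$ by $R_i$. Hence $f(\bx)=f(\bx')$, so $f$ descends through $r^{\times n}$ to $\tilde f\colon(\zisoc)^n_q\to\Z$, which is continuous (quotient of Cantor spaces) and $\risoc^{\times n}$-invariant (from the $T^{\times n}$-invariance of $f$ together with $r\circ T=\risoc\circ r$). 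Thus $\tilde f\in\CI((\zisoc)^n_q)$, as required.

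The principal technical obstacle is the closedness of $R_i$: the ``for all valid spectators'' condition depends on $p(a)$, so stability under perturbation of $(a,a')$ is not formal and requires the continuity-point machinery already used in Lemma~\ref{lem:Q}. Everything else is routine: the algebraic properties of $R_i$, the identification $R_r=E_p$, and the coordinate-by-coordinate swap are all straightforward once $E_p\subset R_i$ is in place.
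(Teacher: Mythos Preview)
Your identification $R_r=E_p$ is wrong, because $X/E_p=\ziso$, the maximal intermediate isometric extension, need not be Cantor even when $X$ and $Y$ are. Take $Y=\{\text{pt}\}$ and $X$ a Sturmian subshift: then $\ziso$ is the underlying irrational circle rotation, which is connected, while $\zisoc=\{\text{pt}\}$ since a Sturmian system has no nontrivial rational eigenvalues. Here $E_p\subsetneq R_r$, and your chain $(x_i,x_i')\in R_r=E_p\subset R_i$ breaks at the claimed equality. Only the inclusion $E_p\subset R_r$ holds in general; the converse you argue for rests precisely on the unjustified claim that $X/E_p$ is Cantor.

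What your argument \emph{does} establish (granting the closedness of $R_i$, which you correctly flag as delicate) is that $f$ factors through $\ziso$ --- and this is exactly the paper's first step. The paper then supplies the passage you omit: since $f$ is integer-valued and continuous on the Cantor space $X^n_p$, it is locally constant; the induced continuous $\Z$-valued function on $(\ziso)^n_{q'}$ is therefore constant on connected components and hence descends to the totally disconnected quotient $(\zisoc)^n_q$. Your proof is repaired by dropping the false equality, concluding only that $f$ factors through $\ziso$, and then invoking this local-constancy step to reach $\zisoc$.

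A smaller remark: your sketch for the closedness of $R_i$ mirrors the proof of Lemma~\ref{lem:Q}, but note that that proof uses the fact that $Q_p$ is already closed and invariant when handling the case $p(a)\notin Y_c$. In your situation you are trying to \emph{prove} closedness of $R_i$, so the reduction to continuity points needs to be carried out using only the invariance of $R_i$ (which you have) together with the invariance and local constancy of $f$; this can be done, but the details deserve a line or two more than you give them.
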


\begin{proof}
Let $X \overset{r'} \to (\ziso,\riso) \overset{q'}\to Y$ be the largest intermediate isometric extension between $X$ and $Y$.
By Lemma \ref{lem:Q} and the fact that the factor $(\ziso,\riso)$
corresponds to the least closed invariant equivalence relation on $X \times X$
which contains $Q_p$ (Appendix~\ref{app:E}), we deduce that $f$ factorizes through 
$\ziso$.
Since $f$ is locally constant, $f$ factorizes through $\zisoc$.
\end{proof}

\begin{proof}[Proof of Theorem~\ref{thm:H=torsion}]
\ref{it:HnXYZY}
This follows from Corollary \ref{cor:Q} and Proposition \ref{prop:chasing}.

\ref{it:HnTorsion}
Apply Proposition \ref{th:tosion_Isometric} .

\ref{it:HzeroTorsion} By Corollary \ref{cor:torsion}, 
$$
\torsion\bigl(H^0(X \mid Y)\bigr)
=\torsion(K_0(X) / p ^*K_0(Y)),
$$
and the statement follows from~\ref{it:HnTorsion}.
\end{proof}

\begin{remark}
In fact, by Remark \ref{alln}, we conclude in the same way that for all $n$
$$
H^n(X\mid Y) = \torsion\Bigl(\frac{K_0(X^{n+1}_p,T^{\times (n+1)})}
{\partial_n^*( K_0(X^n_p,T^{\times n}))}\Bigr)\ .
$$
\end{remark}

\appendix
\section{An example: the Morse minimal system}
\label{ap:morse}

We recall here some well known facts about the Morse dynamical system, and compute the algebraic invariant introduced in the paper in this case. 
We refer to~\cite{Q} for the study of the Morse system and more generally of substitution dynamical systems, and to~\cite{DHS}  for the computation of 
the dimension groups of these systems.
\begin{fact}
The Morse minimal system $(X,T)$ admits the dyadic adding machine as its
maximal isometric factor $p\colon(X,T) \to (Y,S)$ and has the following structure
$$ 
(X,T)\overset {r}{\rightarrow} (Z,R)\overset{q}{\rightarrow} (Y,S)\ ,
$$
where $(Z,R)$ is a Toeplitz (and a substitution) system and $p = q \circ r$.
\end{fact}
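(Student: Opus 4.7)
The plan is to construct the chain $X\to Z\to Y$ by exploiting the bit-flip symmetry of the Thue--Morse sequence. Let $\sigma\colon 0\mapsto 01,\ 1\mapsto 10$ be the Thue--Morse substitution, let $X\subset\{0,1\}^{\Z}$ be the associated subshift with shift $T$, and let $\iota\colon X\to X$ denote the bit-flip involution $\iota(x)_n=1-x_n$.

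First I would identify $(Y,S)$. Since $\sigma$ is a primitive constant-length-$2$ substitution which is bijective on letters, the standard hierarchical (Bratteli--Vershik) description of substitution subshifts \cite{Q,DHS} yields a canonical continuous factor map $p\colon(X,T)\to(\Z_2,+1)$ onto the dyadic adding machine, and a direct computation shows that the continuous eigenvalues of $(X,T)$ are exactly the $2$-power roots of unity; hence $(\Z_2,+1)$ is the maximal equicontinuous (= isometric) factor.

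Next I would construct $(Z,R)$ as the quotient $X/\iota$. One checks directly that $\iota$ commutes with $T$, is fixed-point free (no binary sequence equals its bit-flip), and preserves the fibres of $p$; therefore $Z:=X/\iota$ inherits a homeomorphism $R$, the quotient map $r\colon(X,T)\to(Z,R)$ is a free $\Z/2$-group extension (in particular a finite isometric extension), and $p$ descends to a factor map $q\colon(Z,R)\to(Y,S)$ with $p=q\circ r$.

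Finally I would show that $(Z,R)$ is a Toeplitz substitution system. The continuous map
\[
\pi\colon X\to\{0,1\}^{\Z},\qquad \pi(x)_n=x_n\oplus x_{n+1},
\]
intertwines $T$ with the shift and has fibres exactly equal to the $\iota$-orbits, so it identifies $(Z,R)$ with the shift orbit closure of $\pi(m)$, where $m$ is the Thue--Morse fixed point of $\sigma^2$. Since $m_{2k}m_{2k+1}=\sigma(m_k)\in\{01,10\}$, we have $\pi(m)_{2k}=1$ for every $k$, so $\pi(m)$ is already Toeplitz on the even positions; iterating this observation using the self-similarity of $\sigma$ produces periodic skeletons of period $2^n$ on syndetic positions. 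Combined with the fact that $\pi(m)$ inherits a primitive substitution description from $\sigma$ (after a suitable recoding of consecutive pairs), this shows that $(Z,R)$ is a minimal substitution system whose maximal equicontinuous factor is still $(\Z_2,+1)$ and that $q$ is almost one-to-one, so $(Z,R)$ is Toeplitz.

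I expect the main obstacle to be the last step: producing a clean explicit primitive substitution whose subshift is exactly $(Z,R)$, and verifying that $\pi(m)$ is a Toeplitz sequence, equivalently that $q$ is almost one-to-one. Both are standard in the substitution literature \cite{Q}, but require careful bookkeeping with the recoding of $\sigma$ to differences of consecutive letters; the first two steps are essentially formal once the combinatorial framework of \cite{Q,DHS} is in place.
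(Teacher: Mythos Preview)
Your approach is correct and matches the paper's treatment. The paper does not give a formal proof of this Fact---it is stated as well known with a reference to~\cite{Q}---but in parts~(a)--(c) of the appendix it records exactly the structure you describe: the factor map $r$ is given by the code $z_n=x_n+x_{n+1}\pmod 2$ (your map $\pi$), the identification $X\cong Z\times\Z_2$ via $x\mapsto(r(x),x_0)$ exhibits $r$ as a $\Z_2$-extension, and $(Z,R)$ is presented directly as the substitution system of $\zeta\colon 1\mapsto 10,\ 0\mapsto 11$. This last point resolves what you flagged as the main obstacle: rather than recoding pairs, one simply checks that $\pi$ carries the $\sigma$-fixed point to the $\zeta$-fixed point (your computation $\pi(m)_{2k}=1$ is the first step of this), and the Toeplitz property then follows because $\zeta$ has a coincidence in the sense of Dekking, so $q$ is almost one-to-one.
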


The points of $X$ are written $x=(x_n\colon n\in\Z)$; Sometimes we consider $x$ as belonging to $\{0,1\}^\Z$, sometimes as belonging to $(\Z_2)^\Z$. 
The same convention holds for $Y$.

\noindent\textbf{a)}
$(X,T)$ is the 	system of the Morse substitution
$$
\sigma\colon \left\{\begin{matrix} 0 & \to & 01\\ 
                              1 & \to & 10
\end{matrix}\right.
$$
The dimension group of $(X,T)$ is computed in~\cite{DHS}. 
\begin{fact}
\label{fa:KX}
$K_0(X,T)$ is the dimension group of the matrix
\begin{gather*}
A=\begin{pmatrix}
0 & 2 \\
1 & 1
\end{pmatrix}
\text{ with unit }
e_X=\begin{pmatrix} 2 \\ 2 \end{pmatrix}\ .
\\
K_0(X,T)=\Bigl\{ \begin{pmatrix} (a+2b)/3 \\ (a-b)/3\end{pmatrix} 
\colon b\in\Z,\ \exists n \ 2^na\in\Z,\ 2^na=(-1)^nb\bmod 3\Bigr\}\ .
\end{gather*}
\end{fact}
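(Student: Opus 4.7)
The plan is to derive the Fact in two stages. For the first assertion, I would invoke the Durand--Host--Skau construction~\cite{DHS}, which associates to any primitive aperiodic substitution a stationary proper Bratteli--Vershik representation whose incidence matrix computes $K_0$ as a stationary inductive limit of copies of $\Z^d$. The naive Morse substitution matrix $\begin{pmatrix}1&1\\1&1\end{pmatrix}$ is singular, so a recoding is required; the DHS procedure supplies a proper Bratteli--Vershik model with incidence matrix $A=\begin{pmatrix}0&2\\1&1\end{pmatrix}$ and distinguished order unit $(2,2)^t$, the latter coming from the letter counts at the top of the Kakutani--Rokhlin columns. This yields $K_0(X,T)=\varinjlim(\Z^2,A^t)$, which is naturally identified with $\{v\in\Q^2 : A^nv\in\Z^2\text{ for some }n\ge 0\}$.

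For the second assertion, I would make this inductive limit explicit by diagonalising $A$. Its characteristic polynomial is $\lambda^2-\lambda-2=(\lambda-2)(\lambda+1)$, with eigenvectors $(1,1)^t$ and $(2,-1)^t$. Taking $P=\begin{pmatrix}1&2\\1&-1\end{pmatrix}$, one has $P^2=3I$ (so $P^{-1}=\tfrac13 P$) and $P^{-1}AP=\mathrm{diag}(2,-1)$. A direct check shows
$P^{-1}(\Z^2)=\{\tfrac13(a,b)^t : a,b\in\Z,\ a\equiv b\pmod 3\}$. Parametrising $v=\tfrac13 P(a,b)^t=((a+2b)/3,(a-b)/3)^t$, the condition $A^nv\in\Z^2$ becomes $\tfrac13(2^na,(-1)^nb)^t\in P^{-1}(\Z^2)$. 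This amounts to $2^na\in\Z$, $(-1)^nb\in\Z$ (equivalently $b\in\Z$), and the congruence $2^na\equiv(-1)^nb\pmod 3$---exactly the stated description.

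The main obstacle is the first stage: the matrix $A$ is not the Morse substitution matrix itself, and some work is required to justify its appearance together with the order unit $(2,2)^t$. One option is to cite \cite{DHS} directly; another is to exhibit a proper Bratteli--Vershik model by hand---for instance via return words to a well-chosen clopen set, or via a three-block recoding of the Morse sequence---and verify that its incidence matrix is $A$. Once this is in place, the second stage is routine linear algebra relying only on the diagonalisation of $A$ and the explicit description of the lattice $P^{-1}(\Z^2)$.
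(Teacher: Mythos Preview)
The paper does not actually prove this Fact: it simply records the result and refers the reader to~\cite{DHS} for the computation of the dimension group of the Morse system. Your proposal is therefore in complete alignment with the paper on the first stage (cite~\cite{DHS} for the stationary Bratteli--Vershik model with incidence matrix $A$ and unit $(2,2)^t$), and it goes further than the paper on the second stage by supplying the explicit diagonalisation argument that justifies the parametrisation; the paper merely states the resulting description without derivation.

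Your linear algebra is correct. With $P=\begin{pmatrix}1&2\\1&-1\end{pmatrix}$ one indeed has $P^2=3I$, $P^{-1}AP=\mathrm{diag}(2,-1)$, and $P^{-1}(\Z^2)=\{\tfrac13(a,b)^t:a,b\in\Z,\ a\equiv b\pmod 3\}$; writing $v=\tfrac13P(a,b)^t$ and translating $A^nv\in\Z^2$ into eigencoordinates gives exactly the stated conditions $b\in\Z$, $2^na\in\Z$, $2^na\equiv(-1)^nb\pmod 3$. One small remark: be careful with the direction of the limit---you write $\varinjlim(\Z^2,A^t)$ but then use $A^n$ rather than $(A^t)^n$ in the identification; the paper's convention (visible in part~d of the same appendix, where it speaks of vectors $u$ with $B^nu\in\Z^2$) is that the dimension group of the matrix $A$ is $\{v\in\Q^2:A^nv\in\Z^2\text{ for some }n\}$, so it is cleanest to drop the transpose and work directly with that description.
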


\noindent\textbf{b)}
$(Z,R)$ is the system of the substitution:
$$
\zeta\colon \left\{\begin{matrix} 1 & \to & 10\\ 
                              0 & \to & 11
\end{matrix}\right.
$$

\begin{fact}
\label{fa:KY}
$K_0(Z,R)$ is the dimension group of the matrix
\begin{gather*}
B=\begin{pmatrix}
1 & 2 \\
1 & 0\end{pmatrix}
\text{ with unit }
e_Y=\begin{pmatrix} 2 \\ 1 \end{pmatrix}\ .
\\
K_0(Z,R)=\Bigl\{ \begin{pmatrix} (2a+b)/3 \\ (a-b)/3\end{pmatrix}\colon
b\in\Z,\ 
\exists n\ 2^na\in\Z,\ 2^na=(-1)^nb\bmod 3\Bigr\}\ .
\end{gather*}
\end{fact}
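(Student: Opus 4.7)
The plan is to apply the description of dimension groups of primitive substitution minimal systems given in \cite{DHS}, and then unwind the resulting stationary inductive limit in explicit coordinates by diagonalizing $B$.

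First I would verify that $\zeta\colon 1\mapsto 10,\ 0\mapsto 11$ is primitive: already $\zeta^2(1)=1011$ and $\zeta^2(0)=1010$ contain both letters. Its incidence matrix, with rows and columns indexed by $(1,0)$ and column $j$ counting the letter-occurrences in $\zeta(j)$, is exactly $B=\begin{pmatrix}1&2\\1&0\end{pmatrix}$. By the general theorem of \cite{DHS}, $K_0(Z,R)$ is the inductive limit of the stationary sequence $\Z^2\overset{B}{\to}\Z^2\overset{B}{\to}\cdots$, and its order unit $e_Y=(2,1)^T$ is identified with the Perron eigenvector of $B$.

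Next I would diagonalize $B$. The characteristic polynomial is $\lambda^2-\lambda-2=(\lambda-2)(\lambda+1)$, yielding eigenvalues $2$ and $-1$ with eigenvectors $v_+=(2,1)^T$ and $v_-=(1,-1)^T$. Since $\det B=-2$, the map $[v,n]\mapsto B^{-n}v$ embeds the inductive limit into $\Z[1/2]^2\subset\Q^2$. Writing an arbitrary vector in the eigenbasis with the convenient normalization
\begin{equation*}
(x,y)=\tfrac{a}{3}\,v_++\tfrac{b}{3}\,v_-=\Bigl(\tfrac{2a+b}{3},\ \tfrac{a-b}{3}\Bigr)
\end{equation*}
introduces the parameters $(a,b)\in\Q^2$ of the statement; in these coordinates $B^n$ acts as $(a,b)\mapsto(2^n a,(-1)^n b)$.

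Finally I would characterize which pairs $(a,b)$ correspond to elements of the limit. Such a pair qualifies iff $B^n(x,y)\in\Z^2$ for some $n\geq 0$, equivalently
\begin{equation*}
2^{n+1}a+(-1)^n b\in 3\Z\quad\text{and}\quad 2^n a-(-1)^n b\in 3\Z.
\end{equation*}
Summing gives $3\cdot 2^n a\in 3\Z$, which forces $2^n a\in\Z$; the first condition then forces $(-1)^n b\in\Z$, so $b\in\Z$. Once these integrality conditions hold, both congruences collapse (using $2\equiv -1\pmod 3$) to the single relation $2^n a\equiv(-1)^n b\pmod 3$, giving the description in the statement. The only conceptual input is the invocation of \cite{DHS}; the remainder is routine eigenbasis bookkeeping, which I do not expect to pose any obstacle.
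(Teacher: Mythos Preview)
The paper gives no proof of this fact at all: it is stated as a known computation, with the sentence ``we refer to~\cite{DHS} for the computation of the dimension groups of these systems'' as the only justification. Your proposal therefore goes further than the paper does, and your eigenbasis computation is entirely correct: writing $(x,y)=\bigl((2a+b)/3,(a-b)/3\bigr)$, the condition $B^n(x,y)^T\in\Z^2$ unwinds exactly as you describe, and the two congruences do collapse (via $2\equiv -1\bmod 3$) to the single relation $2^na\equiv(-1)^nb\bmod 3$.

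Two points deserve care before you call the argument complete. First, the ``general theorem of~\cite{DHS}'' that identifies $K_0$ with the stationary inductive limit of the incidence matrix requires the substitution to be \emph{proper} (common first and last letter in every image); $\zeta$ has a common first letter but not a common last letter, so strictly speaking one must pass through a conjugate proper substitution (via return words, as in~\cite{DHS}) and check that the resulting matrix still yields the same inductive limit. Note for comparison that in Fact~\ref{fa:KX} the matrix $A$ for the Morse system is \emph{not} the naive incidence matrix $\begin{pmatrix}1&1\\1&1\end{pmatrix}$ of $\sigma$, so this step is not automatic. Second, your assertion that ``the order unit is identified with the Perron eigenvector'' is not a general fact and is not justified; in the standard embedding $[v,n]\mapsto B^{-n}v$, the class of $\mathbf 1_X$ sits at $(1,1)^T$, not at the Perron eigenvector $(2,1)^T$. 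The specific choice $e_Y=(2,1)^T$ in the paper reflects the particular Bratteli model coming out of~\cite{DHS}, and you should trace through that construction rather than assert it.
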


\noindent\textbf{c)} Let us consider $X$ and $Z$ as included in $(\Z_2)^\Z$. 
The factor map $r\colon X\to Z$ is the map defined by the code
$$ z_n=x_n+x_{n+1}\ .$$
We  identify $X$ with $Z\times \Z_2$ by the map $x\mapsto(r(x),x_0)$.
Then $r\colon (X,T)\to (Z,R)$ is an extension by $\Z_2$, given by the cocycle $\sigma(y)=z_0$.

\begin{fact}
\label{fa:HXZ}
$H^0(X\mid Z)=\Z_2$.
\end{fact}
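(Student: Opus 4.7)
The plan is to apply Theorem~\ref{th:groupe} directly. Part (c) of the setup tells us that, after identifying $X$ with $Z \times \Z_2$ via $x \mapsto (r(x), x_0)$, the factor map $r\colon (X,T)\to (Z,R)$ is a minimal extension by the finite group $G=\Z_2$, realized by the cocycle $\sigma(y)=z_0$. We are therefore in exactly the situation covered by Theorem~\ref{th:groupe}: a Cantor minimal system $(X,T)$ which is a minimal extension of the Cantor minimal system $(Z,R)$ by a finite group.

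First I would invoke Theorem~\ref{th:groupe} with $(Y,S)$ replaced by $(Z,R)$ and $G = \Z_2$, to conclude that
\[
H^0(X \mid Z) \;\cong\; H^{0+2}(\Z_2) \;=\; H^2(\Z_2).
\]
Then I would appeal to the well-known computation of the cohomology of the cyclic group $\Z_k$ recorded earlier in the paper (in the subsection ``Back to the cyclic case''), namely $H^n(\Z_k)=\Z_k$ when $n$ is even and $0$ when $n$ is odd. Applied to $k=2$ and $n=2$ this gives $H^2(\Z_2)=\Z_2$, which combined with the previous isomorphism yields $H^0(X\mid Z)=\Z_2$ as claimed.

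There is essentially no obstacle here: the whole content of the fact is already packaged inside Theorem~\ref{th:groupe} together with the standard cohomology of $\Z_2$. The only thing one should verify, if being pedantic, is that the extension $r\colon X \to Z$ really is a (minimal) group extension by $\Z_2$ in the sense of Definition~\ref{def:isom}, but this is precisely the content of part (c) of the preceding facts about the Morse system: the identification $X \cong Z \times \Z_2$ is a homeomorphism conjugating $T$ to $(z,g)\mapsto (Rz, g+\sigma(z))$ with $\sigma(z)=z_0$, and minimality of $(X,T)$ is classical. Consistency with the more elementary Theorem~\ref{th:finite-abel} is also immediate: Corollary~\ref{cor:torsion} together with $H^0(X\mid Z)=\Z_2$ forces $\torsion(K_0(X,T)/r^*K_0(Z,R))=\Z_2$, matching the unique nontrivial intermediate cyclic extension $X \to Z$ itself.
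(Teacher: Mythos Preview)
Your proof is correct. Since $r\colon(X,T)\to(Z,R)$ is a minimal extension by the finite group $\Z_2$, Theorem~\ref{th:groupe} gives $H^0(X\mid Z)\cong H^2(\Z_2)=\Z_2$, exactly as you argue; this is indeed the implicit justification the paper relies on (it is the special case already recorded in the ``Back to the cyclic case'' subsection).

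The paper, however, does something slightly different in spirit: rather than simply quoting the general theorem, it exhibits an \emph{explicit} element of order $2$ in $K_0(X,T)/r^*K_0(Z,R)$. Namely, with $h(x)=x_0$ and $g(z)=z_0$ one computes $h\circ T-h=g\circ r-2\cdot\mathbf{1}_{[10]}$, so the class $\alpha=[\mathbf{1}_{[10]}]$ satisfies $2\alpha\in r^*K_0(Z,R)$ but $\alpha\notin r^*K_0(Z,R)$. This hands-on construction only produces a nontrivial $2$-torsion element and does not by itself establish the equality $H^0(X\mid Z)=\Z_2$; for that one still needs the general result you invoke. So your argument is the complete one, while the paper's contribution here is an explicit witness that identifies which concrete class in $K_0(X,T)$ generates the torsion.
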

In fact, we
present an explicit element of order $2$ in $K_0(X,T)/r^*K_0(Z,R)$.
Let $h(x)=x_0$, considered as an element of $\{0,1\}$, and $g(y)=y_0$,  considered as an element of 
$\{0,1\}$. We have 
$$h(Tx)-h(x)=g\circ r(x)-2\cdot 1_{[10]}(x)\ .$$
If $\alpha$ is the class of $1_{[10]}$ in $K_0(X,T)$ we have that $\alpha\notin r^*K_0(Z,R)$ and $2\alpha\in r^*K_0(Z,R)$.
In the representation of Fact~\ref{fa:KX}, $\alpha$ 
corresponds to the vector $\begin{pmatrix}
0 \\ 1\end{pmatrix}$.

\noindent\textbf{d)}
The map $r^*\colon K_0(Z,R)\to K_0(X,T)$ is not computed in 
\cite{DHS}
but it can be obtained 
using the methods of the present work. In the representations of Facts~\ref{fa:KX} and~\ref{fa:KY}, the map $r^*$ is 
given by the matrix
$$
R=\begin{pmatrix} 2 & -2\\ 0 & 2\end{pmatrix}\ .
$$
We can check that $Re_Y=e_X$ and that $RB=AR$. If $\alpha\in\R^2$ belongs to the dimension group of $(Z,R)$, that is, if it is represented by a vector $u\in\R^2$ with $B^n u\in\Z^2$ for some $n$, then 
$A^nRu=RB^nu\in 2\Z^2$ and this means that $r^*\alpha\in 2\cdot K_0(X,T)$. The converse inclusion is obtained in the same way and thus 
$$
r^*K_0(Z,R)=2\cdot K_0(X,T)\ .
$$
In the representation of Fact~\ref{fa:KX}, $r^*K_0(Z,R)$ corresponds to the case that $b\in 2\Z$. Therefore
\begin{fact}
$\displaystyle
\frac{K_0(X,T)}{r^*K_0(Z,R)}=\Z_2\ .
$
\end{fact}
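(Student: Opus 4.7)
The plan is straightforward since all the pieces have just been assembled in the preceding paragraphs. The paragraph immediately before the Fact establishes the crucial identity $r^*K_0(Z,R)=2\cdot K_0(X,T)$, so the quotient in question is simply $K_0(X,T)/2K_0(X,T)$, which is automatically a $2$-torsion group. The remaining task is to show that this quotient equals $\Z_2$ rather than something larger like $\Z_2\oplus\Z_2$.

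My first approach is to produce an explicit isomorphism using the parametrization of Fact~\ref{fa:KX}. Define $\pi\colon K_0(X,T)\to \Z_2$ by sending the vector $\binom{(a+2b)/3}{(a-b)/3}$ to $b\bmod 2$. This is well defined because the integer $b$ is uniquely determined by the vector: subtracting the two coordinate equations gives $3b$, and hence $b$ (and then $a$). The map is clearly a homomorphism, its kernel consists exactly of the elements with $b\in 2\Z$, which by the identification recalled above is precisely $r^*K_0(Z,R)$. For surjectivity, the element $\alpha$ produced just before Fact~\ref{fa:HXZ} is represented by $\binom{0}{1}$, corresponding to $(a,b)=(2,-1)$; since $-1$ is odd, $\pi(\alpha)=1$. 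This yields the isomorphism $K_0(X,T)/r^*K_0(Z,R)\cong\Z_2$.

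Alternatively, and more in the spirit of the paper, the result follows cohomologically without appealing to the explicit parametrization. Since $r^*K_0(Z,R)=2K_0(X,T)$, the quotient $K_0(X,T)/r^*K_0(Z,R)$ is entirely $2$-torsion and in particular coincides with its torsion subgroup. By Theorem~\ref{thm:H=torsion}(iii), that torsion subgroup is $H^0(X\mid Z)$, and by Fact~\ref{fa:HXZ} (an instance of Theorem~\ref{th:groupe} with $G=\Z_2$, using $H^2(\Z_2)=\Z_2$) this cohomology group equals $\Z_2$.

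There is no real obstacle here: the nontrivial input is the equality $r^*K_0(Z,R)=2K_0(X,T)$, which has already been proved in the preceding paragraph by comparing the matrices $A$ and $B$ through the intertwining $RB=AR$. The Fact itself is just the passage from that equality to the corresponding quotient.
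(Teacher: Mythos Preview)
Your proposal is correct and your first approach is essentially the paper's own argument, just spelled out in more detail: the paper too identifies $r^*K_0(Z,R)$ with the condition $b\in 2\Z$ in the parametrization of Fact~\ref{fa:KX} and reads off $\Z_2$ from that. One tiny arithmetic slip: with $u=(a+2b)/3$ and $v=(a-b)/3$ one gets $u-v=b$, not $3b$; the conclusion that $b$ is determined is of course unaffected. Your second, cohomological, route via Theorem~\ref{thm:H=torsion}\ref{it:HzeroTorsion} and Fact~\ref{fa:HXZ} is a genuine alternative; the paper mentions this direction only afterwards as a coherence check rather than as the proof.
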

This is coherent with Fact~\ref{fa:HXZ} and 
Part~\ref{it:HzeroTorsion} of Theorem\ref{thm:H=torsion}.

\noindent\textbf{e)} The dimension group of the odometer $(Y,S)$ is well known.
Here $\Z[1/2]$ is considered as a subgroup of $\Q$: 
$$\Z[1/2]=\{x\in\Q\colon \exists n\in\N,\ 2^nx\in\Z\}\ .$$
$$
K_0(Y,S)=\Z[1/2]\ ;\ e_Y=1\ ;\ K_0^+(Y,S)=\Z[1/2]\cap \Q^+\ .
$$
Using the representation of Fact~\ref{fa:KY} for $K_0(Z,R)$, the map $q^*\colon K_0(Y,S)\to K_0(Z,R)$ is
$
u\mapsto u\cdot\begin{pmatrix} 2 \\ 1\end{pmatrix}\ .
$

In the representation of Fact~\ref{fa:KY}, $q^*K_0(Y,S)$ is the subgroup of $K_0(Z,R)$ corresponding to $b=0$. The map $\begin{pmatrix} u \\ v\end{pmatrix}\mapsto u-2v$ induces an isomorphism
\begin{fact}
\label{fa:ZY}
$\displaystyle
\frac{K_0(Z,R)}{q^*K_0(Y,S)}\cong\Z\ .
$
\end{fact}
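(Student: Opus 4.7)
The plan is to construct the claimed isomorphism explicitly by verifying that the map
$$
\phi\colon K_0(Z,R)\longrightarrow\Z,\qquad \begin{pmatrix} u \\ v\end{pmatrix}\longmapsto u-2v,
$$
is a well-defined surjective group homomorphism whose kernel coincides with $q^*K_0(Y,S)$; the conclusion will then follow from the first isomorphism theorem.

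First I would check that $\phi$ lands in $\Z$ and is surjective. Substituting the parametrization of Fact~\ref{fa:KY} gives
$$
\phi\begin{pmatrix} (2a+b)/3 \\ (a-b)/3\end{pmatrix} = \frac{2a+b}{3}-\frac{2(a-b)}{3} = b,
$$
which is an integer by the very definition of the parametrization. The same formula proves surjectivity: for any $b\in\Z$, taking $a=b$ and $n=0$ trivially satisfies the arithmetic constraint of Fact~\ref{fa:KY} and produces a preimage of $b$ in $K_0(Z,R)$.

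Next I would identify the kernel. The equation $\phi=0$ amounts to $u=2v$, i.e.\ to $b=0$ in the parametrization. When $b=0$ the constraint becomes $2^n a\equiv 0\pmod 3$, forcing $a\in 3\Z[1/2]$; writing $a=3w$ with $w\in\Z[1/2]$, the element becomes $w\begin{pmatrix} 2 \\ 1\end{pmatrix}=q^*(w)$, matching the explicit description of $q^*$ recorded just above the statement. Hence $\ker\phi=q^*K_0(Y,S)$. There is no substantive obstacle anywhere in the argument: once the candidate functional $\phi$ is guessed from the shape of $q^*$, everything reduces to unwinding the parametrizations of Fact~\ref{fa:KY} and of $q^*$ already recorded in the text.
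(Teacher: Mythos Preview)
Your argument is correct and is exactly the approach the paper takes: it records that $q^*K_0(Y,S)$ is the subgroup corresponding to $b=0$ and that the map $\begin{pmatrix} u \\ v\end{pmatrix}\mapsto u-2v$ induces the desired isomorphism. You have simply spelled out the straightforward verifications that the paper leaves implicit.
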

Since the factor map $q\colon (Z,R)\to (Y,S)$ is almost 
one-to-one, we have 
\begin{fact}
$H^*(Z\mid Y)=0$.
\end{fact}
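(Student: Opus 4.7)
The plan is to reduce this assertion immediately to Theorem~\ref{thm:proximal}. Since $q\colon (Z,R)\to (Y,S)$ has been identified as an almost one-to-one extension between minimal systems, the first step is to observe that any such extension is automatically a proximal extension. Indeed, if $z_1,z_2\in Z$ satisfy $q(z_1)=q(z_2)=y$ and $y_0\in Y$ is a point with a single preimage $z_0\in Z$, then by minimality of $(Y,S)$ we can find a sequence $(n_k)$ with $S^{n_k}y\to y_0$, and by compactness of $Z$ a subsequence along which $R^{n_k}z_1$ and $R^{n_k}z_2$ both converge; since their limits lie in $q^{-1}\{y_0\}=\{z_0\}$, they coincide, and hence $\inf_k d(R^{n_k}z_1,R^{n_k}z_2)=0$.

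With proximality in hand, the second step is simply to invoke Theorem~\ref{thm:proximal}, which asserts that $H^n(Z\mid Y)=0$ for every $n\geq 0$ whenever $q$ is a proximal extension between Cantor minimal systems. This yields the claim verbatim.

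The only matter that requires a sentence of justification is the almost one-to-one property of $q$ itself; for the Morse/Toeplitz presentation this is classical (the Toeplitz factor $(Z,R)$ is well known to be an almost one-to-one extension of its maximal equicontinuous factor, which here is the dyadic odometer $(Y,S)$). No computation with the dimension groups is needed, which is the whole point of introducing the cohomology groups $H^n(\,\cdot\mid\cdot\,)$: the relevant cancellation happens at the level of invariant integer-valued functions on the fiber products $Z_q^n$, where proximality forces every such invariant to be constant along prolongation classes and hence, as in the proof of Theorem~\ref{thm:proximal}, to be globally constant.

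Thus there is no real obstacle; the main content was already absorbed into Theorem~\ref{thm:proximal}, and the fact only serves to illustrate that the nontrivial torsion $\Z_2$ detected in $K_0(X,T)/p^*K_0(Y,S)$ comes entirely from the group extension step $r\colon X\to Z$, while the almost one-to-one step $q\colon Z\to Y$ is cohomologically invisible.
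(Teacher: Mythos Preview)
Your proof is correct and matches the paper's approach exactly: the paper simply remarks that $q$ is almost one-to-one and invokes (implicitly) Theorem~\ref{thm:proximal}, and you have spelled out the standard fact that almost one-to-one implies proximal before applying that theorem.

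One small slip in your closing commentary (not in the proof itself): the $\Z_2$ torsion sits in $K_0(X,T)/r^*K_0(Z,R)$, not in $K_0(X,T)/p^*K_0(Y,S)$; the latter is isomorphic to $\Z$ and is torsion free, consistently with $H^0(X\mid Y)=0$.
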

In particular, $H^0(Z\mid Y)=0$, which is the expected result given Fact~\ref{fa:ZY} and
Part~\ref{it:HzeroTorsion} of Theorem\ref{thm:H=torsion}.

\noindent\textbf{f)} 
It is not hard to see that $\mathcal{I}(X^3_p) = \Z$. By Proposition \ref{prop:chasing}
this implies:
\begin{fact}
\label{fa:HXY}
$H^0(X \mid Y)=0$.
\end{fact}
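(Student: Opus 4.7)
\emph{Proof plan.} By Proposition \ref{prop:chasing},
\[
H^0(X\mid Y)\cong \frac{\ker(d_3)}{\mathrm{range}(d_2)},
\]
where $d_n\colon\mathcal{I}(X^n_p)\to\mathcal{I}(X^{n+1}_p)$ is the restriction of the simplicial differential $\partial_n$ to invariants. Assuming the key claim $\mathcal{I}(X^3_p)=\mathbb{Z}$, the computation is immediate: for any constant $c\in\mathbb{Z}$ we have $\partial_3(c)=c-c+c-c=0$, so $d_3\equiv 0$ on $\mathbb{Z}$ and $\ker(d_3)=\mathbb{Z}$; similarly $\partial_2(c)=c-c+c=c$, so $d_2$ sends $\mathbb{Z}\subseteq\mathcal{I}(X^2_p)$ onto $\mathbb{Z}=\mathcal{I}(X^3_p)$ and $\mathrm{range}(d_2)=\mathbb{Z}$. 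Therefore $H^0(X\mid Y)\cong\mathbb{Z}/\mathbb{Z}=0$, and the whole proof hinges on the claim $\mathcal{I}(X^3_p)=\mathbb{Z}$.

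To establish that claim, I would apply Corollary \ref{cor:Q}: every $f\in\mathcal{I}(X^3_p)$ factors through $(Z_{\mathrm{isoc}})^3_q$, where $Z_{\mathrm{isoc}}$ is the largest intermediate Cantor isometric extension between $X$ and $Y$. Once $Z_{\mathrm{isoc}}=Y$ is verified, $(Z_{\mathrm{isoc}})^3_q$ collapses to $Y$, so $f$ descends to a continuous $S$-invariant integer-valued function on $Y$, hence is constant by minimality of $(Y,S)$.

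The main obstacle is the identification $Z_{\mathrm{isoc}}=Y$. By Definition \ref{def:isom}, a Cantor isometric extension $Z'\to Y$ has the product form $Z'=Y\times K$, so its projection to $Y$ has constant fiber size $|K|$. If such a $Z'$ were an intermediate factor of $X$ above $Y$, then $|K|$ would have to divide $|p^{-1}(y)|$ for every $y\in Y$. But the Morse extension has fiber cardinality $2$ over generic $y$ (those for which $q^{-1}(y)$ is a singleton) and $4$ over the countably many bad $y$ where $|q^{-1}(y)|=2$; thus $|K|\in\{1,2\}$. The remaining case $|K|=2$ is ruled out by direct inspection: no continuous pairing of the $4$-point bad fibers into $2$ classes extends the trivial pairing of the generic $2$-point fibers, due to the bit-flip relation between the two preimages in $q^{-1}(y_0)$. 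Hence $|K|=1$ and $Z_{\mathrm{isoc}}=Y$. As a consistency check, Corollary \ref{cor:torsion} together with Theorem \ref{th:abel-max} predict $\mathrm{torsion}(K_0(X,T)/p^{*}K_0(Y,S))=0$, which aligns with the explicit dimension groups in Facts \ref{fa:KX}--\ref{fa:ZY}.
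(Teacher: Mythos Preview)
Your approach coincides with the paper's: reduce via Proposition~\ref{prop:chasing} to the claim $\mathcal{I}(X^3_p)=\mathbb{Z}$, which the paper asserts without proof, and then read off $\ker(d_3)/\mathrm{range}(d_2)=\mathbb{Z}/\mathbb{Z}=0$ exactly as you do. Your extra step of deriving $\mathcal{I}(X^3_p)=\mathbb{Z}$ from Corollary~\ref{cor:Q} together with $Z_{\mathrm{isoc}}=Y$ is correct and supplies what the paper leaves implicit.

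Two comments on the justification of $Z_{\mathrm{isoc}}=Y$. First, a much shorter route is already available in the appendix: the opening Fact states that $Y$ is the maximal isometric (equicontinuous) factor of $X$. Since $Y$ is equicontinuous, any isometric extension $W\to Y$ is again equicontinuous; if $W$ is also a factor of $X$ it must therefore factor through $Y$, and being above $Y$ forces $W=Y$. Second, in your fiber-counting argument the divisibility claim is not quite right: from an intermediate $X\to Z'\to Y$ with $|K|$-point fibers one only gets $|K|\le |p^{-1}(y)|$, not $|K|\mid |p^{-1}(y)|$. This is harmless here because the generic fibers of $p$ have two points, so $|K|\le 2$ already gives $|K|\in\{1,2\}$. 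The ``bit-flip'' argument excluding $|K|=2$ is too vague as stated; the maximal-equicontinuous-factor argument above, or equivalently Theorem~\ref{th:finite-abel} combined with the torsion-freeness of $K_0(X,T)/p^*K_0(Y,S)\cong\mathbb{Z}$ that you mention as a consistency check, settles it cleanly.
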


Notice that 
$$
H^0(X \mid Y) \neq H^0(X \mid Z) / H^0(Z \mid Y) =
(\Z_2)/ 0 = \Z_2.
$$

\noindent\textbf{g)} From the preceding descriptions of $r^*$ and $q^*$ it follows that
$p^*K_0(Y,S)$ is the subgroup of $K_0(X,T)$ given by  Fact~\ref{fa:KX} corresponding to $b=0$.
Therefore we have
\begin{fact}
$K_0(X,T)/p^*K_0(Y,S)\cong \Z$.
\end{fact}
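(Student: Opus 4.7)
The plan is to combine the previously computed matrix descriptions of $r^*$ and $q^*$ and trace through the parametrizations of Facts \ref{fa:KX} and \ref{fa:KY}. Since $p = q \circ r$, we have $p^* = r^* \circ q^*$, so the computation reduces to applying the matrix $R$ (obtained in part d) to the image of $q^*$ (obtained in part e) inside $K_0(Z,R)$.

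First, I would identify $q^* K_0(Y,S)$ inside the parametrization of Fact \ref{fa:KY}: it is the subgroup $\bigl\{u\bigl(\begin{smallmatrix}2\\1\end{smallmatrix}\bigr) : u \in \Z[1/2]\bigr\}$, which corresponds exactly to the parameter $b = 0$ (with $a = 3u$). Next I would apply the matrix $R = \bigl(\begin{smallmatrix} 2 & -2 \\ 0 & 2 \end{smallmatrix}\bigr)$, getting $R\bigl(\begin{smallmatrix}2u\\u\end{smallmatrix}\bigr) = \bigl(\begin{smallmatrix}2u\\2u\end{smallmatrix}\bigr)$, so $p^* K_0(Y,S) = \bigl\{\bigl(\begin{smallmatrix}2u\\2u\end{smallmatrix}\bigr) : u \in \Z[1/2]\bigr\}$.

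Then I would translate this back to the parameters $(a,b)$ of Fact \ref{fa:KX} via the inverse substitution $b = x - y$, $a = x + 2y$: the vector $\bigl(\begin{smallmatrix}2u\\2u\end{smallmatrix}\bigr)$ corresponds to $b = 0$ and $a = 6u$ with $u \in \Z[1/2]$. Since $6\Z[1/2] = 3\Z[1/2]$, and since the compatibility condition $2^n a \equiv (-1)^n b \pmod 3$ in Fact \ref{fa:KX} reduces, when $b = 0$, exactly to $a \in 3\Z[1/2]$, this shows that $p^* K_0(Y,S)$ equals the whole $b=0$ subgroup of $K_0(X,T)$.

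Finally I would define the map $\pi \colon K_0(X,T) \to \Z$ sending an element with parameters $(a,b)$ to $b$. The parametrization in Fact \ref{fa:KX} is a bijection (its inverse is $b = x-y$, $a = x + 2y$), so $\pi$ is a well-defined group homomorphism. It is surjective because for any $b \in \Z$ the pair $(a,b) = (b,b)$ satisfies the compatibility condition (take $n=0$), and its kernel is precisely the $b = 0$ subgroup, namely $p^* K_0(Y,S)$. Thus $K_0(X,T)/p^* K_0(Y,S) \cong \Z$. The only mild subtlety is verifying that the $b=0$ subgroup inside $K_0(X,T)$ coincides with $6\Z[1/2]$ in the $a$-coordinate, which is automatic from the arithmetic condition, so no real obstacle arises. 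The result is consistent with Fact \ref{fa:HXY} and Theorem \ref{thm:H=torsion}\ref{it:HzeroTorsion}, since the quotient is torsion free and therefore $H^0(X \mid Y) = 0$.
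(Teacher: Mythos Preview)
Your proposal is correct and follows essentially the same approach as the paper: identify $p^*K_0(Y,S)$, via $p^*=r^*\circ q^*$, as the $b=0$ subgroup in the parametrization of Fact~\ref{fa:KX}, and conclude that the quotient is $\Z$. You have simply made explicit the computations that the paper leaves to the reader, including the verification that $6\Z[1/2]=3\Z[1/2]$ exhausts the $b=0$ locus and the construction of the surjection onto $\Z$ via the $b$-coordinate.
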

This is coherent with Fact~\ref{fa:HXY} and Part~\ref{it:HzeroTorsion} of Theorem\ref{thm:H=torsion}.

\section{Intermediate extensions by $\Z_n$}
\label{sec:appendix}
We present a particular case of Theorem~\ref{th:finite-abel} with a simpler proof.
\begin{proposition}
\label{prop:cyclic}
Let $p:(X,T)\to(Y,S)$ be factor map between  Cantor
minimal systems, and
$n>1$ an integer. The following statements are equivalent:
\begin{enumerate}
\item\label{it:cyclic1}
$p$  admits an intermediate extension by $\Z_n$ .
\item
\label{it:cyclic2}
$K_0(X,T)/p^*K_0(Y,S)$ contains an element of order $n$.
\end{enumerate}
\end{proposition}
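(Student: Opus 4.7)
The plan is to prove the two implications directly by elementary lifting arguments, using only the correspondence between intermediate $\Z_n$-extensions and ergodic $\Z_n$-cocycles described in Section~\ref{subsec:inter}, together with the injectivity of $p^*$ (Lemma~\ref{lem:onetoone}) and the torsion-freeness of $K_0$ (Lemma~\ref{lem:K0torsionfree}). This avoids the tensor-product and duality machinery used in the general proof of Theorem~\ref{th:finite-abel}.

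For (i) $\Rightarrow$ (ii), I would start from an intermediate extension $(X,T)\overset{r}{\to}(Z,R)\overset{q}{\to}(Y,S)$ with $Z=Y\times\Z_n$ and ergodic cocycle $\sigma\in\CC(Y,\Z_n)$. By Section~\ref{subsec:inter}, there exists $\phi\in\CC(X,\Z_n)$ with $\sigma\circ p=\phi\circ T-\phi$ in $\Z_n$. I would then lift $\sigma$ and $\phi$ to $\tilde\sigma\in\CC(Y)$ and $\tilde\phi\in\CC(X)$ with values in $\{0,\dots,n-1\}$, so that $\tilde\sigma\circ p-(\tilde\phi\circ T-\tilde\phi)=n\psi$ for some $\psi\in\CC(X)$. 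This gives $n[\psi]=p^*[\tilde\sigma]\in p^*K_0(Y,S)$, so $[\psi]$ has order dividing $n$ in the quotient. To pin down the order, assume it equals some proper divisor $d'\mid n$, set $e=n/d'>1$, and deduce $[\tilde\sigma]=[eg]$ in $K_0(Y,S)$ for some $g\in\CC(Y)$ via the injectivity of $p^*$. Reducing modulo $n$ then shows $\sigma$ is cohomologous to a cocycle valued in the proper subgroup $e\Z/n\Z\subsetneq\Z_n$, contradicting the ergodicity of $\sigma$.

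For (ii) $\Rightarrow$ (i), I would take $[f]\in K_0(X,T)$ of order exactly $n$ in the quotient and write $nf=g\circ p+h\circ T-h$. Reducing modulo $n$ gives a cocycle $\bar g\in\CC(Y,\Z_n)$ together with the identity $\bar g\circ p=\bar h-\bar h\circ T$, i.e., $\bar g\circ p$ is a $\Z_n$-coboundary on $X$. This lets me define the extension $R(y,a)=(Sy,a+\bar g(y))$ on $Z=Y\times\Z_n$, together with the factor map $r(x)=(p(x),-\bar h(x))$, producing the desired diagram. The substantive step is to verify that $\bar g$ is ergodic, which is equivalent to $(Z,R)$ being minimal and nontrivial. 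Assuming the contrary, $\bar g$ would be cohomologous to a cocycle with values in a proper subgroup $m\Z/n\Z$ with $m\mid n$, $m>1$; lifting to integers produces a decomposition $g=m\tilde\tau'+\tilde u\circ S-\tilde u+nv$, which inserted in the defining relation yields $n([f]-p^*[v])=m\cdot p^*[\tilde\tau']$. Since $K_0(X,T)$ is torsion-free, one can divide by $m$ to conclude $(n/m)[f]\in p^*K_0(Y,S)$, contradicting the assumption that $[f]$ has order $n$.

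The main obstacle is the careful bookkeeping in the lifting arguments: one must ensure that the passage between $\Z_n$-valued cocycles and their integer lifts cleanly produces the required identities in $K_0$. The key technical point is matching ``ergodicity of the cocycle'' with ``$[\psi]$ has order exactly $n$''; both directions of this matching hinge on the two basic lemmas of Section~1.
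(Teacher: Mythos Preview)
Your proposal is correct and follows essentially the same route as the paper's own proof. Both directions proceed by lifting $\Z_n$-valued cocycles to integer-valued functions, obtaining a relation of the form $n[\psi]=p^*[\tilde\sigma]$ (resp.\ $nf=g\circ p+\text{coboundary}$), and then matching ``order exactly $n$ in the quotient'' with ``ergodicity of the $\Z_n$-cocycle'' via Lemma~\ref{lem:onetoone} and Lemma~\ref{lem:K0torsionfree}. The only cosmetic difference is that for \ref{it:cyclic1}$\Rightarrow$\ref{it:cyclic2} the paper first reduces to the case where $(X,T)$ itself equals the intermediate $\Z_n$-extension (taking the second coordinate as the explicit lift $\tilde\phi$), whereas you keep a general intermediate $(Z,R)$ throughout; the two are equivalent since $r^*$ is injective.
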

\begin{proof} 
\ref{it:cyclic1} $\Rightarrow$ \ref{it:cyclic2}

Without loss of generality, we can assume that $p\colon (X,T)\to(Y,S)$ is itself
an extension by $\Z_n$, that is: $X=Y\times \Z_n$, $p$ is the first projection, 
and $T(y,j)=(Sy,j+\sigma(y)\bmod n)$ for some continuous map $\sigma\colon Y\to \Z_n$.

We identify $\Z_n$ with the subset $\{0,1,\ldots,n-1\}$ of $\Z$ and
consider the second projection 
$h\colon  X\to \Z_n$ as valued in $\Z$. We have that $h$ belongs to $\CC(X)$ and, for
every $x\in X$, $h(Tx)-h(x)=\sigma\circ p(x)\bmod n$. Thus there exists
$f\in\CC(X)$ with
\begin{equation}
\label{eq:n}
h\circ T-h=\sigma\circ p+nf\ .
\end{equation}
Let $[f]$ be the image of $f$ in $K^0(X,T)$. By~\eqref{eq:n}, we have $n[f]\in p^*K^0(Y,S)$.

Let $k$ be the smallest positive integer such that $k[f]\in p^*K^0(Y,S)$. We claim that $k=n$.
There
exists $u\in\CC(X)$ and $w\in\CC(Y)$ such that
\begin{equation}
\label{eq:n2}
u\circ T-u=w\circ p+kf\ .
\end{equation}
Moreover, $k$ divides $n$, and we write $n=kd$ for some integer $d\geq 1$.
From~\eqref{eq:n} and~\eqref{eq:n2} we get
\begin{equation}
\label{eq:n3}
(h-du) \circ T -(h-du) = (\sigma-dw)\circ p\ .
\end{equation}
Thus $(\sigma-dw)\circ p$ is a coboundary in $\CC(X)$
and, by Lemma~\ref{lem:onetoone}, $\sigma-dw$ is a coboundary in $\CC(Y)$.
This however is impossible (because $\sigma$
is ergodic) except if $d=1$, that is, $k=n$.  The claim is proved.

This shows that $[f]$ defines an element of order $n$ in the
quotient $K^0(X,T)/ \allowbreak p^*K^0(Y,S)$.

\medskip\noindent\ref{it:cyclic2} $\Rightarrow$ \ref{it:cyclic1}.
Let $\alpha$ be an element of $K^0(X,T)$ corresponding to an element of
order $n$ in $K^0(X,T)/p^*K^0(Y,S)$. So, $n\alpha\in K^0(Y,S)$ and
$d\alpha\notin K^0(Y,S)$ for $1\leq d<n$. We choose $f\in\CC(X)$ such that
$[f]=\alpha$. 
There exists
$h\in\CC(X)$ and $\sigma\in\CC(Y)$ satisfying~\eqref{eq:n}. Let
$\bar\sigma\colon Y \to \Z_n$ be the reduction of $\sigma$ modulo $n$.

We claim that this $\Z_n$-cocycle is ergodic. Let $d\geq 1$ be a divisor of
$n$, and assume that the $\Z_n$-cocycle $d\sigma$ is a $\Z_n$-coboundary.
There exists $u\in \CC(Y)$ such that 
$d\sigma-nu$ is a coboundary.
We write $n=kd$. As $K_0(Y,S)$ is torsion-free, $\sigma-ku$ is a
coboundary. Substituting in~\eqref{eq:n} we get that $nf-ku\circ p\in p^*K_0(Y,S)$
and, by definition of $f$, $d=n$. Our claim is proved.

Let $q:(Y\times \Z_n,S_{\bar\sigma})\to(Y,S)$ be the extension defined by
$\bar\sigma$. We define $r\colon X\to (Y\times \Z_n)$ by
$r(x)=\bigl( p(x),h(x)\bmod n\bigr)$ where $h$ is as in~\eqref{eq:n}. 
Then it is immediate that $r$ is continuous, that $q\circ r=p=\pi$ and 
that $r$ commutes with the transformations. Moreover, $r$ is onto because
$(Y\times \Z_n,S_{\bar\sigma})$ is minimal. It is a factor map, and we get the announced intermediate extension.
\end{proof}

\section{Constructing $G$-extensions}
\label{app:B}
   
In this appendix we describe one way to construct an
extension $\pi\colon (X,T) \to (Y,S)$ of Cantor minimal systems
with the following properties.
\begin{enumerate}
\item
$\pi$ is a group extension with a given finite
noncommutative simple group $K$ as the group of the
extension.
\item
On $X$ there are two distinct invariant probability measures
which project onto the same invariant measure on $Y$.
Thus the induced map $\pi_*$ on the simplex of invariant
probability measures is not an isomorphism; whence the canonical image
of $K_0(Y)$ in $K_0(X)$ is a proper subgroup.
\end{enumerate}

Given a finite group $G$ our first task is to exhibit an extension $\pi \colon (X,T) \to (Y,S)$ between Cantor minimal systems which is a $G$ extension. That is
$G$ is a finite group of homeomorphisms of $X$ each of which commutes 
with $T$, and such that $X/G \cong Y$.

We will describe two such constructions. The first is explicit and produces  a 
$G$-extension of a Toeplitz system.
The second utilizes horocycle minimal systems.

\subsection{The Toeplitz construction}

We construct a Toeplitz system $(Y,S)$ and a cocycle $\phi \colon Y \to G$
such that the resulting dynamical system $(X,T)$ defined on 
the product space $X = Y \times G$ by
$T(y,g) = (Sy,g\phi(x))$ is minimal.
This construction was described to us by B. Weiss and we
thank him for the permission to publish it here.
For more details on Toeplitz systems we refer to S. William's work
\cite{Wi}.

\begin{theorem}\label{thm:Toeplitz}
Let $G$ be a finite group. There exists a strictly ergodic (hence minimal)
Toeplitz system $(Y,S)$ and a cocycle $\phi : Y \to G$ such that the associated
skew product on $X= Y \times G$ defined by 
$$
T(y,g) = (Sy,g\phi(x))
$$
is minimal.
\end{theorem}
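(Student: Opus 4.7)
The plan is to build $y\in G^{\mathbb Z}$ as a Toeplitz sequence, take $(Y,S)$ to be the orbit closure of $y$ under the shift, and define $\phi:Y\to G$ by $\phi(y')=y'_0$. With this definition the skew product satisfies
$$
T^n(y',e)=\bigl(S^n y',\ y'_0\,y'_1\cdots y'_{n-1}\bigr),
$$
so minimality of $(X,T)$ is equivalent to the following assertion: for every $y'\in Y$, every clopen neighborhood $U$ of $y'$ in $Y$ and every $g\in G$, there exists $n>0$ with $S^n y'\in U$ and $y'_0 y'_1\cdots y'_{n-1}=g$. By minimality of $(Y,S)$ it suffices to verify this condition at the single point $y$.

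The construction of $y$ is the standard inductive Toeplitz scheme (cf.\ \cite{Wi}). Fix a rapidly increasing sequence of periods $p_1\mid p_2\mid\cdots$ with $p_1=|G|$ and an enumeration $G=\{g_0=e,g_1,\dots,g_{N-1}\}$. Inductively choose words $W_k\in G^{p_k}$ and nested ``filled position'' sets $B_k\subset\{0,1,\dots,p_k-1\}$, extended $p_k$-periodically to $\mathbb Z$, so that $B_k\subsetneq B_{k+1}$ and $\bigcup_k B_k=\mathbb Z$; set $y_n$ to be the common value of $W_k(n\bmod p_k)$ for every $k$ with $n\in B_k$. This makes $y$ Toeplitz and hence $(Y,S)$ minimal; a clopen neighborhood $U$ of $y$ is determined by agreement with $y$ on a central window $[-M,M]$, which, for any $k$ with $p_k>2M+1$ and $[-M,M]\subset B_k$, is pinned down by the $B_k$-coordinates alone. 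In particular, every $n\in p_k\mathbb Z$ satisfies $S^n y\in U$.

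Strict ergodicity is guaranteed by fixing, at each stage, a deterministic ``template'' that tells us how to concatenate $p_{k+1}/p_k$ copies of $W_k$ with prescribed insertions to form $W_{k+1}$; by standard unique-ergodicity criteria for Toeplitz systems, the fact that every $W_k$-block appears in $W_{k+1}$ with the same frequency in every long enough window forces convergence of all pattern frequencies. To arrange simultaneously that the skew product be minimal, I reserve at each stage a small block of newly-filled positions (of length $N=|G|$) inside the very first $p_k$-subblock of $W_{k+1}$, whose contents I use as a prefix $g\cdot g^{-1}_{\text{prev}}$ that can shift the running partial product $y_0 y_1\cdots y_{m p_k-1}$, for $m=0,1,\dots,p_{k+1}/p_k-1$, through all of $G$. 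Thus for every $k$ and every $g\in G$ there exists $n\in p_k\mathbb Z$ with $y_0\cdots y_{n-1}=g$; combined with the previous paragraph this gives minimality of $(X,T)$.

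The main obstacle is harmonizing the three competing demands: the Toeplitz pattern (which restricts when a position may be filled), strict ergodicity (which forces a fixed, deterministic filling rule across all $p_k$-subblocks of $W_{k+1}$), and the cocycle condition (which demands freedom to prescribe partial products). The resolution is to split, at each stage, the newly filled positions $B_{k+1}\setminus B_k$ into two disjoint pieces: a ``template piece,'' identical across every $p_k$-subblock and ensuring convergence of frequencies, and a much smaller ``adjustment piece,'' which is only used in a bounded number of subblocks per stage and therefore does not disturb frequency convergence, yet suffices to realize every element of $G$ as a partial product. Verifying that this bookkeeping can be carried out consistently, together with the routine but tedious check that the resulting $(Y,S)$ is indeed strictly ergodic, are the only non-trivial points; the rest follows the general theory of Toeplitz flows.
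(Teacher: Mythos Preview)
Your overall strategy---build a Toeplitz point in $G^{\mathbb Z}$, take $\phi(y')=y'_0$, and arrange that every $g\in G$ is an essential value of the cocycle so that minimality of the skew product follows from~\cite{LM}---is exactly the paper's. The difference is in the execution, and the paper's is considerably leaner than your template/adjustment scheme. The paper takes $p_k=2^{k+1}$ and at stage $k$ fills a \emph{single} residue class $\{2^{k+1}n+2^k-1:n\in\mathbb Z\}$ with a \emph{single} value $g_k$, chosen so that $a\,g_k\,b=u_{k'}$, where $a=\omega(0)\cdots\omega(2^k-2)$, $b=\omega(0)\cdots\omega(2^{k-1}-2)$ and $k'\equiv k\pmod N$. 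Since the unfilled density after stage $k$ is $2^{-(k+1)}\to 0$, the sequence is \emph{regular} Toeplitz and strict ergodicity is automatic from the standard theory---no separate ``template piece'' is needed. And because after stage $k$ every position in $[0,2^{k+1}-1)$ is already filled, the partial product $\omega(0)\cdots\omega(2^k+2^{k-1}-2)=a\,g_k\,b$ is completely determined at the moment $g_k$ is chosen; thus each $u_j$ is forced to be an essential value with no further bookkeeping.

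There is a genuine gap in your plan as written. You want the adjustment inserted at stage $k+1$ to steer the partial products $y_0 y_1\cdots y_{mp_k-1}$ through all of $G$, but these products involve \emph{every} coordinate in $[0,mp_k)$, not only those lying in $B_{k+1}$. The coordinates in $[0,mp_k)\setminus B_{k+1}$ are only filled at later stages, and their (as yet undetermined) values will alter precisely the products you are trying to prescribe now. Your adjustment controls the $B_{k+1}$-contribution to those products, not the products themselves. The paper's construction avoids this chicken-and-egg problem because its filling pattern guarantees $[0,2^{k+1}-1)\subset B_k$, so the relevant partial products are already fully determined when the new value is selected. If you impose an analogous condition---say $[0,p_k-1)\subset B_k$ with only the last position of each block left open---your scheme collapses to essentially the paper's, and the separate template/adjustment machinery becomes superfluous: one free letter per stage is enough to hit every group element and simultaneously keep the sequence regular.
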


\begin{proof}
Let $G = \{u_0, u_1,\dots, u_{N-1}\}$ be an enumeration of the elements of $G$,
with $u_0 =e$, the identity element.
The space $Y$ will be the orbit closure of a point $\omega \in 
\Omega = G^\Z$
under the shift map $S : G^\Z \to G^\Z$, $(S\omega')(n) = \omega'(n+1)$,
$\omega'\in \Omega$.
The cocycle $\phi : Y \to G$ will be the projection onto the zero
coordinate: $\phi(\omega') = \omega'(0)$, for every $\omega' \in Y$.

The point $\omega_0$ is defined inductively as follows. At stage $0$
we set $\omega(2n) = u_0$ for all $n \in \Z$.
At stage $1$ we set $\omega(4n +1) = u_1$ for all $n \in \Z$.
Suppose $\omega$ is already defined at all the integers of the form
$2^jn + 2^{j-1} -1$, $n \in \Z$, $1 \le j \le k$.
We set $a = \omega(0)\omega(1)\cdots \omega(2^k -2)$ and
$b = \omega(0)\omega(1)\cdots \omega(2^{k -1} -2)$.
We now define $\omega(2^{k+1}n + 2^k -1) = g_k$
with $g_k$ the unique solution of the equation
$$
ag_kb = u_{k'},
$$
where $k'$ is the unique element of $\{0,1,2,\dots,N-1\}$ such that $k' \equiv k \pmod{N}$.

This completes the construction of the sequence $\omega$, which is clearly
a regular Toeplitz sequence. It then follows that the dynamical system
$(Y,S)$ is minimal Cantor and uniquely ergodic (i.e. strictly ergodic).
Now the numbers of the form $2^kn + 2^{k-1} - 1$ are ``recurrence
times" for $\omega$ and it follows from our construction that each $u_j \in G$ 
is an essential value of the cocycle $\phi$. By the general theory of topological
cocycles we conclude that the skew-product $(Y\underset{\phi}\times G, T)$
is minimal
(see Lema\'{n}czyk and Mentzen \cite{LM}).
\end{proof}

\subsection{The $\SL(2,\R)$ construction}

Let $\G=\SL(2,\R)$ and let $\Gamma$ be a cocompact
discrete subgroup. It is well known that there exists a 
homomorphism $\phi:\Gamma\to F$ where $F$
is a free group on two generators. Let $E$
be a normal subgroup of $F$ with $G=F/E$.
let $\Sigma=\phi^{-1}(E)$; then $\Sigma$
is a normal subgroup of $\Gamma$
with $\Gamma/\Sigma=G$.
Set $X_1=\G/\Gamma$, $Y_1=\G/\Sigma$ and let
$\pi_1\colon X_1\to Y_1$ be the natural projection.
We let  $S_1\colon Y_1\to Y_1$ and $T_1:X_1\to X_1$ be defined by
multiplication on the left by the matrix
$$
h_1= \begin{pmatrix} 1&1\\ 0&1 \end{pmatrix}.
$$
It is well known that these ``time one"
horocycle transformations act minimally and in a
uniquely ergodic way. Moreover it is clear that 
under these actions the map $\pi$ becomes a group
extension with the group $G$ as the group of the
extension. For more details on the horocyclic flow see e.g.
\cite{Gl03}.

\begin{lemma}\label{Cantor}
If $(Y,S)$ is an infinite minimal dynamical system then there is
a Cantor minimal dynamical system $(X,T)$ and an almost
one-to-one factor map $\pi \colon (X,T) \to (Y,S)$.  
\end{lemma}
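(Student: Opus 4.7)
The plan is to construct $(X,T)$ by symbolically coding $(Y,S)$ with respect to a countable basis of open sets in $Y$; this is a standard way to produce a Cantor almost one-to-one extension of an arbitrary infinite minimal system.

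Since $(Y,S)$ is infinite minimal, $Y$ is a perfect compact metric space. I will fix a countable basis $\{U_n\}_{n\geq 1}$ of proper nonempty open subsets of $Y$; each $\partial U_n$ is closed with empty interior, and by minimality every $S^{-k}(\partial U_n)$ is closed with empty interior as well. Let $Z=\{0,1\}^{\Z\times\N}$ carry the shift $T(z)(k,n)=z(k+1,n)$ and its product Cantor topology, and define $\Phi\colon Y\to Z$ by $\Phi(y)(k,n)=\mathbf{1}_{\bar U_n}(S^k y)$, so that $\Phi\circ S=T\circ\Phi$; note that $\Phi$ is Borel but in general not continuous.

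Set $X_0=\overline{\Phi(Y)}\subset Z$ and let $X\subseteq X_0$ be any $T$-minimal closed invariant subset (Zorn's lemma). I will define $\pi\colon X\to Y$ by sending $x\in X$ to the unique $y\in Y$ satisfying, for every $n$, the two implications $x(0,n)=1\Rightarrow y\in\bar U_n$ and $x(0,n)=0\Rightarrow y\notin U_n$. Uniqueness will follow from normality of $Y$: any two distinct points can be separated by a basis set $U_n$ whose closure misses one of them, and then the two implications cannot simultaneously hold for both candidates. Existence will follow by choosing $y_j\in Y$ with $\Phi(y_j)\to x$ and passing to a convergent subsequence $y_j\to y$; the asymmetric choice of $\bar U_n$ on the one-side and $U_n$ on the zero-side is precisely what lets both implications survive the limit. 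A similar argument, using that each coordinate of $x$ is locally constant, will give continuity of $\pi$, and by construction $\pi$ intertwines $T$ and $S$; since $\pi(X)$ is a nonempty closed $S$-invariant subset of $Y$, minimality of $(Y,S)$ forces $\pi(X)=Y$.

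It then remains to verify almost one-to-oneness and the Cantor property. Setting $Y_{\text{good}}=Y\setminus\bigcup_{n,k}S^{-k}(\partial U_n)$, Baire category makes $Y_{\text{good}}$ a dense $G_\delta$ subset of $Y$. For $y\in Y_{\text{good}}$ and $x\in\pi^{-1}(y)$, applying the defining implications to each $T^k x$ (whose $\pi$-image is $S^k y$) and using the equivalence $S^k y\in U_n\iff S^k y\in\bar U_n$ will force $x(k,n)=\Phi(y)(k,n)$ for all $k,n$, so $\pi^{-1}(y)=\{\Phi(y)\}$ and $\pi$ is almost one-to-one. Finally, $X$ is compact, metric, and totally disconnected as a closed subspace of $Z$, and it has no isolated points because it is minimal and infinite (infinite since $\pi(X)=Y$ is), so $X$ is Cantor. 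The main obstacle throughout is the discontinuity of $\Phi$, which is what forces the asymmetric definition of $\pi$ and requires the Baire-category argument to control the bad set coming from the boundaries $\partial U_n$; once $\pi$ is in place, each remaining step is standard.
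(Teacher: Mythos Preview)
Your argument is correct, and it takes a genuinely different route from the paper. The paper starts from an irreducible continuous surjection $f\colon C\to Y$ from the Cantor set, forms the ``orbit space'' $X=\{x\in C^{\Z}\colon f(x_{n+1})=Sf(x_n)\}$, and shows that $\pi(x)=f(x_0)$ is an irreducible factor map (hence almost one-to-one and $(X,T)$ minimal). You instead build a symbolic cover by coding orbits against the indicators $\mathbf{1}_{\bar U_n}$ of closures of basic open sets, then pass to a minimal subset and recover $\pi$ via the asymmetric closure/interior implications. The paper's route is shorter because the single input ``$f$ irreducible'' does the heavy lifting; yours is more explicit and self-contained, trading the irreducibility lemma for a Baire-category control of the boundary set $\bigcup_{n,k}S^{-k}(\partial U_n)$. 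Two small remarks: the nowhere-density of $S^{-k}(\partial U_n)$ comes simply from $S$ being a homeomorphism, not from minimality; and when you assert equivariance ``by construction'', it is worth noting (as you implicitly use later for the almost one-to-one step) that the limit point $y$ produced in your existence argument automatically satisfies the defining implications for \emph{all} $k$, which is what makes $\pi(Tx)=S\pi(x)$ go through.
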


\begin{proof}
Let $f \colon C \to Y$ be a continuous surjection from the Cantor set $C$
onto $Y$. By Zorn's lemma we can assume that this map is 
{\em irreducible}; i.e. if $D \subset C$ is closed and $f(D)=Y$ then $D=C$.
Let $T$ denote the shift on $C^\Z$ and define
$$
X = \{x \in C^\Z \colon f(x_{n+1}) = S f(x_n),\ \forall n \in \Z\}.
$$
Let $\pi_n : X \to Y,\ n \in \Z$ be the projection maps restricted to $X$.
It is now easy to check that $X$ is a closed shift invariant subset
of $C^\Z$. Denoting 
by $\pi : X \to Y$ the map $\pi(x) =f \circ \pi_0(x)= f(x_0)$ we conclude that
$\pi \colon (X,T) \to (Y,S)$ is a factor map.

Next we will show that $\pi$ is irreducible. In fact, suppose $A \subsetneq
X$ is closed and satisfies $\pi(A) =Y$. Let $V  = X \setminus A$.
By the irreducibility of $f$
we get that $\pi_0(A) = \{x_0 : x \in A\} = C$. Next observe that also
$\pi(T^nA) = Y$ for every $n \in \Z$, whence also $\pi_n(A) =C$
for every $n \in \Z$. 

Now $V$ is a nonempty open subset of $X$ and therefore contains a
basic open set of the form 
$$
X \cap (\cdots \times C \times C \times U_n \times
U_{n+1} \times \cdots \times U_{n+k} \times C \times C \times \cdots),
$$
where the sets $U_j$ are nonempty open subsets of $C$ and at least one of 
them is not all of $C$.
Suppose $U_m\not=C$. Then $\pi_m (A) \cap U_m =\emptyset$,
contradicting the above observation.

Thus $\pi$ is indeed an irreducible map and it is easy to deduce that
$(X,T)$ is a minimal system.
Now it is well known that a factor map $\pi \colon (X,T) \to (Y,S)$
of minimal systems is irreducible if and only if it is almost one-to-one.
Since $X$ is clearly a Cantor set our proof is complete.
\end{proof}

The proof of the next relative disjointness lemma is standard.

\begin{lemma}\label{pull}
Let $\pi\colon (X,T) \to (Y,S)$ and $\sigma \colon(Z,R) \to (Y,S)$ be factor maps between
minimal systems. If $\pi$ is a distal extension and $\sigma$ an almost
one-to-one extension then the relative product
$$
X \underset{Y} \times Z = \{(x,z) \in X \times Z :  \pi(x) = \sigma(z)\},
$$
is a minimal dynamical system and the projection
$\theta \colon X \underset{Y} \times Z  \to Z$ is a distal extension. If moreover
$\pi$ is a $G$-extension with finite group $G$ then so is $\theta$.
\end{lemma}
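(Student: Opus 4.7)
The plan is to prove the three assertions in turn. First I would establish minimality of $M:=X\underset{Y}\times Z$. Let $M'\subseteq M$ be a minimal subset; by minimality of $(Z,R)$, the projection $\theta(M')=Z$. Since $\sigma$ is almost one-to-one, the set $Z_0=\{z\in Z:\sigma^{-1}(\sigma(z))=\{z\}\}$ is a dense $G_\delta$ in $Z$. I would first show that for every $z_0\in Z_0$, the fiber $M'_{z_0}:=\{x\in X:(x,z_0)\in M'\}$ equals the full $\pi$-fiber $\pi^{-1}(\sigma(z_0))$. Given $(x_0,z_0)\in M'$ and any $x\in\pi^{-1}(\sigma(z_0))$, minimality of $(X,T)$ yields $n_k$ with $T^{n_k}x_0\to x$; passing to a subsequence, $R^{n_k}z_0\to z^*$, whence $(x,z^*)\in M'$ by closedness and invariance. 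Since $\sigma(z^*)=\pi(x)=\sigma(z_0)$, the definition of $Z_0$ forces $z^*=z_0$, so $(x,z_0)\in M'$.

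I would then extend this equality of fibers to every $z\in Z$ by invoking the classical fact that distal extensions between minimal metric systems are open (Furstenberg--Auslander--Ellis). Given $z\in Z$ and $x\in\pi^{-1}(\sigma(z))$, pick $z_k\in Z_0$ with $z_k\to z$; openness of $\pi$ furnishes $x_k\in\pi^{-1}(\sigma(z_k))$ with $x_k\to x$. Each $(x_k,z_k)$ lies in $M'$ by the previous step, and closedness of $M'$ gives $(x,z)\in M'$. Hence $M'=M$, proving minimality. Distality of $\theta$ is then a direct transfer: if $(x,z),(x',z)\in M$ are proximal over $\theta$, then $\pi(x)=\pi(x')=\sigma(z)$ and a sequence $n_k$ along which $T^{n_k}x$ and $T^{n_k}x'$ converge to a common point simultaneously witnesses proximality of $(x,x')$ over $\pi$; distality of $\pi$ therefore forces $x=x'$.

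For the $G$-extension clause, I would define a $G$-action on $M$ by $g\cdot(x,z)=(gx,z)$. It is well-defined since $\pi(gx)=\pi(x)=\sigma(z)$, commutes with $T\times R$ because $G$ commutes with $T$, preserves each $\theta$-fiber $\pi^{-1}(\sigma(z))\times\{z\}$, and acts transitively on that fiber since $G$ acts transitively on $\pi^{-1}(\sigma(z))$; therefore $\theta$ is a $G$-extension. The main obstacle in the above plan is the appeal to openness of distal minimal extensions, a classical but nontrivial ingredient; one can simply cite it (e.g.\ Auslander's book), or, if a self-contained argument is preferred, invoke the structure theorem expressing a distal extension as a (possibly transfinite) tower of isometric extensions and induct through it, at each stage using that isometric extensions are automatically open.
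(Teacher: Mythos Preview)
The paper does not actually prove this lemma: it is stated with the remark that ``the proof of the next relative disjointness lemma is standard'' and no argument is given. Your proposal is a correct and complete realization of one of the standard proofs.

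Your minimality argument is sound: the crucial point that for $z_0\in Z_0$ any limit $z^*$ of $R^{n_k}z_0$ with $\sigma(z^*)=\sigma(z_0)$ must equal $z_0$ is exactly what almost one-to-oneness buys, and the passage to arbitrary $z$ via openness of $\pi$ is the right device. The openness of distal extensions between minimal systems is indeed a nontrivial external input, but it is classical (Auslander, \emph{Minimal Flows and their Extensions}) and is precisely the sort of fact the authors would have in mind when calling the lemma ``standard''. An alternative, equally standard, route avoids openness by arguing directly that $\theta\colon M'\to Z$ is a distal extension (your paragraph on distality shows this verbatim for $M$, and the same computation works for $M'$), hence $M'$ contains an almost periodic point over every $z\in Z$; combined with the full-fiber statement over the dense set $Z_0$ and a semicontinuity argument, this also yields $M'=M$. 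Either way the conclusion follows. Your treatments of distality of $\theta$ and of the $G$-extension clause are correct and need no change.
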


Next construct a commutative diagram 
\begin{equation*}
\xymatrix
{
(X,T) \ar[d]_{\pi}\ar[r]  & (X_1,T_1) \ar[d]^{\pi_1}  \\
(Y,S) \ar[r] & (Y_1,S_1) 
}
\end{equation*}
where $(X,T)$ and $(Y,S)$ are minimal
Cantor and $\pi$ is a $G$-extension.
This is possible by applying Lemma \ref{Cantor} to find an
almost-one-one Cantor extension $Y$ of $Y_1$
and then taking $X = X_1 \underset{Y_1} \times Y$ and 
applying Lemma \ref{pull}.

\subsection{The Furstenberg Weiss theorem}

Next recall the following result 
of Furstenberg and Weiss, \cite{FW1}.

\begin{theorem}
Let $(Y,S)$ be a non-periodic dynamical system and
let $(X,T){\overset\pi\to} (Y,S)$
be an extension of $(Y,S)$, where $(X,T)$
is recurrent topologically transitive. Then there exist 
a minimal system $(\overline{X},\overline{T})$, an almost 1-1 extension
$(\overline{X},\overline{T})\overset {\overline{\pi}}\rightarrow (Y,S)$, 
a Borel $T$-invariant subset $X_0\subset X$, and a Borel measurable map
$X_0 {\overset \theta \to } \overline{X}$ 
satisfying
\begin{enumerate}
\item $\theta  T=\overline {T} \theta$
\item $\overline {\pi} \theta= \pi$
\item $\theta$ is a Borel isomorphism of $X_0$ onto its image
       $\overline{X}_0$ in $\overline{X}$
\item $\mu (X_0)=1$ for every $T$-invariant
measure $\mu$ on $X$.
In fact, the set $X_0$ is of the form $\pi^{-1}(Y_0)$
for some Borel $S$-invariant subset $Y_0 \subset Y$ with the property that
$\nu(Y_0)=1$ for any invariant measure $\nu$ on $Y$.
\end{enumerate}
\begin{equation*}
\xymatrix
{
(X,T) \ar[rr]^\theta \ar[dr]_{\pi} & & 
(\overline{X},\overline{T}) \ar[dl]^{\overline{\pi}} \\
& (Y,S) &
}
\end{equation*}
\end{theorem}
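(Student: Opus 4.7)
The plan is to emulate the Furstenberg--Weiss construction, realizing $(\overline X,\overline T)$ as a minimal subsystem of a natural compactification of the orbit of a transitive point, and defining $\theta$ via a canonical embedding whose image lies in $\overline X$ on an invariant set of full measure.

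First I would choose a recurrent transitive point $x_0\in X$ and set $y_0=\pi(x_0)$; topological transitivity of $(X,T)$ yields $\overline{\{T^nx_0\}}=X$ and hence $\overline{\{S^ny_0\}}=Y$. The ambient space will be $X\times 2^X$, where $2^X$ is the hyperspace of closed subsets of $X$ with Hausdorff topology, equipped with the induced action $F\mapsto TF$. I would let $W\subset X\times 2^X$ be the orbit closure of $(x_0,\pi\inv\{y_0\})$ under $T\times T_*$, which is compact and invariant, and take $(\overline X,\overline T)$ to be a minimal subsystem of $W$ (existing by Zorn's lemma). The projection $(x,F)\mapsto\pi(x)$ defines an equivariant continuous surjection $\overline\pi\colon\overline X\to Y$, since its image is a closed invariant subset of $Y$ meeting the dense orbit of $y_0$.

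To establish that $\overline\pi$ is almost one-to-one, I would use the fact that the fiber map $\Psi\colon Y\to 2^X$, $\Psi(y)=\pi\inv\{y\}$, is upper semi-continuous, hence continuous on a dense $G_\delta$ set $Y_c\subset Y$. Any $(x,F)\in\overline\pi\inv\{y\}$ with $y\in Y_c$ is approached by a net $(T^{k_i}x_0,\pi\inv\{S^{k_i}y_0\})$ with $S^{k_i}y_0=\pi(T^{k_i}x_0)\to y$, and continuity of $\Psi$ at $y$ forces $F=\pi\inv\{y\}$. A Baire-category refinement combining this with non-periodicity of $(Y,S)$ and minimality of $\overline X$ then reduces the $X$-coordinate of the fiber to a single point over $y$ in a dense $G_\delta$ subset $Y_0\subset Y_c$, establishing the almost-one-to-one property. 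For the Borel map I would set $X_0:=\pi\inv(Y_0)$ and $\theta(x):=(x,\pi\inv\{\pi(x)\})$. Equivariance, $\overline\pi\circ\theta=\pi$, and Borel measurability (via Borel-ness of $\Psi$) are immediate, while injectivity of $\theta|_{X_0}$ and $\theta(X_0)\subset\overline X$ follow from the fiber identification just established. The measure-theoretic conclusion $\mu(X_0)=1$ reduces to $\nu(Y_0)=1$ for every $S$-invariant Borel probability $\nu$, which in turn follows because non-periodicity of $(Y,S)$ forces every such $\nu$ to be non-atomic on orbits and to give full measure to any invariant residual set, via a standard Rokhlin-tower type argument.

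\textbf{Main obstacle.} The delicate step is ensuring that the canonical embedding $\theta$ actually takes values inside the chosen minimal subsystem $\overline X\subset W$ rather than in some other minimal subsystem of $W$, for $x$ on a set of full measure. Since $W$ can contain several minimal subsystems, this requires a careful matching between $\overline X$ and the distinguished orbit of $(x_0,\pi\inv\{y_0\})$; the matching is forced by combining recurrence of $x_0$ along return times of $y_0$ in $Y$ with non-periodicity of $(Y,S)$, the latter being precisely what prevents the exceptional set from being ergodically thick.
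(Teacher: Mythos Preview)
First, a framing point: the paper does not prove this theorem. It is quoted verbatim from Furstenberg and Weiss~\cite{FW1} and used as a black box in Appendix~\ref{app:B}. So there is no ``paper's proof'' to compare against; one can only judge whether your sketch stands on its own as a reconstruction of the Furstenberg--Weiss argument.

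Your approach is in the right spirit (hyperspace of fibers, upper semicontinuity of $y\mapsto\pi^{-1}\{y\}$), but several steps are genuine gaps rather than routine details. First, your almost one-to-one argument for $\overline\pi$ only pins down the $2^X$-coordinate of a point in $\overline\pi^{-1}\{y\}$ when $y\in Y_c$; it says nothing about the $X$-coordinate. The fiber $\overline\pi^{-1}\{y\}$ is then $\{(x,\pi^{-1}\{y\}):x\in A_y\}$ for some closed $A_y\subset\pi^{-1}\{y\}$, and nothing you have written forces $|A_y|=1$. The phrase ``a Baire-category refinement combining this with non-periodicity of $(Y,S)$ and minimality of $\overline X$'' is not an argument. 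Second, and relatedly, your ``main obstacle'' is not just delicate, it is unresolved: having chosen $\overline X$ as an \emph{arbitrary} minimal subset of $W$, you have no mechanism linking it to the orbit of $(x_0,\pi^{-1}\{y_0\})$, so there is no reason $\theta(x)=(x,\pi^{-1}\{\pi(x)\})$ should land in $\overline X$ for any $x$ at all. (Recurrence of $x_0$ gives recurrence of $(x_0,\pi^{-1}\{y_0\})$ when $y_0\in Y_c$, but recurrent points need not be uniformly recurrent, so $W$ need not be minimal.) Third, your measure claim is based on a false principle: it is simply not true that an invariant probability measure on a non-periodic (even minimal) system must assign full measure to every invariant residual set, so ``$\nu(Y_0)=1$ because $Y_0$ is invariant and residual'' does not work. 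In the actual Furstenberg--Weiss construction the set $Y_0$ is produced together with $\overline X$ in a way that makes the full-measure statement a consequence of the construction, not of a general category-versus-measure fact.
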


Note that when the system $(X,T)$ is Cantor minimal
so is $(\overline{X},\overline{T})$. Indeed, the fact that 
$\overline{X}$ is Cantor follows from the construction
in \cite{FW1}.

Now we consider the $G$-extension of minimal Cantor systems
$\pi \colon (X,T) \to (Y,S)$ and 
apply this theorem to the extension $\pi$ to obtain
a commutative diagram as in the theorem
with $(\overline{X},\overline{T})$ minimal, $\overline{\pi}$ an almost 1-1 extension, and $\theta$, defined on a full-measure (with respect
to every invariant probability measure) Borel subset $X_0 \subset
X$, a Borel isomorphism of $X_0$ onto its image $X'_0\subset \overline{X}$.

Our final step is to construct the relative
product system 
$$
(\tilde{X},\tilde{T})=(X \underset{ Y } \times \overline{X},T \times \overline{T})
$$
where 
$$
X \underset {Y} \times \overline{X} = \{(x,x')\in X\times \overline{X}:
\pi(x) =\overline{\pi}(x')\}.
$$
As the extension $\pi$ is isometric and the extension
$\tilde{\pi}$ is almost 1-1, it follows that $(\tilde{X},\tilde{T})$ is a minimal 
system (Lemma \ref{pull} again). We set $(\tilde{Y},\tilde{S})=
(\overline{X},\overline{T})$ and let 
$\tilde{\pi}\colon (\tilde{X},\tilde{T})\to (\tilde{Y},\tilde{S})$ be the natural projection of 
$\tilde{X}$ onto $\overline{X} = \tilde{Y}$.
Clearly now $\tilde{\pi}$ is a $G$-extension between the minimal Cantor systems
$(\tilde{X},\tilde{T})$ and $(\tilde{Y},\tilde{S})$.

We now use the measure isomorphism $\theta$ to construct the required measures.
Let $\mu$ be a fixed invariant measure on $X$ and let 
$\overline{\mu} =\theta_*(\mu)$. Then $\overline{\mu}$
is an invariant measure on $\overline{X}$ and both $\mu$ and 
$\overline{\mu}$ project onto the same invariant measure, say $\nu$, on $Y$.
Let
$$
\mu = \int_{Y} \mu_{y} \, d\nu(y), \qquad
\overline{\mu} = \int_{Y} \overline{\mu}_{y} \, d\nu(y),
$$
be the corresponding disintegrations.
The relative product measure
$$
\lambda = \int_{Y} \mu_{y} \times \overline{\mu}_{y} \, d\nu(y)
$$
is then an invariant measure on $\tilde{X}$ which projects under 
$\tilde{\pi}$ onto $\overline{\mu}$.
On the other hand the measure
$$
\eta = \int_{X_0} \delta_x \times \delta_{\theta(x)} \, d\mu(x),
$$
is a different invariant probability measure on $X$ projecting onto 
$\overline{\mu}$. 
In fact, disintegrating $\eta$ over $\overline{\mu}$ we see that
almost every fiber measure is a point mass, while in the disintegration
of $\lambda$ over $\overline{\mu}$ almost every fiber measure is an atomic measure with $|G|$ atoms. Thus by the uniqueness of disintegration, we conclude
that $\lambda$ and $\eta$ are indeed distinct measures.
It is now clear that the extension 
$\tilde{\pi}\colon(\tilde{X},\tilde{T})\to (\tilde{Y},\tilde{S})$
satisfies our requirements.

%%%
%%% Removed the appendix about non trivial H^1
\section{On the relative regionally proximal relation}\label{app:E}

Let $\pi : X \to Y$ be a factor map between minimal systems.
We recall that the relative regionally proximal relation $Q_\pi$
and the relative equicontinuous structure relation $E_\pi$,
were defined in Section \ref{sec:torsion}, Definition \ref{def:QE}.

\begin{lemma}\label{lem:QQ}
Let $X \overset{\rho} \to Z \overset{\sigma} \to Y$ be factor maps between minimal systems with $\pi = \sigma \circ \rho$. Then 
$$
(\rho \times \rho)(Q_\pi) = Q_\sigma.
$$ 
\end{lemma}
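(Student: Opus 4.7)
\medskip\noindent\textbf{Proof proposal.}
The forward inclusion $(\rho\times\rho)(Q_\pi)\subseteq Q_\sigma$ is immediate from the definition: any witnessing data $(x_j,x'_j)\to(x,x')$ in $X^2_\pi$ with $T^{n_j}(x_j,x'_j)\to(w,w)$ projects under $\rho\times\rho$ to witnessing data for $(\rho(x),\rho(x'))\in Q_\sigma$, using $\rho\circ T=R\circ\rho$ and continuity of $\rho$.

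For the reverse inclusion, fix $(z,z')\in Q_\sigma$ with sequences $(z_j,z'_j)\to(z,z')$ in $Z^2_\sigma$ and integers $n_j$ such that $(R^{n_j}z_j,R^{n_j}z'_j)\to(w,w)$ for some $w\in Z$. I want to produce a lift $(x,x')\in X^2_\pi$ of $(z,z')$ that lies in $Q_\pi$. The naive approach of lifting $(z_j,z'_j)$ to arbitrary $(x_j,x'_j)\in X^2_\pi$ and extracting a convergent subsequence yields pairs $T^{n_j}(x_j,x'_j)\to(u,u')\in X^2_\rho$ that generally fail to be diagonal, which is the main obstacle.

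The key idea is to first move the target $w$ to a continuity point of the set-valued map $\rho^{-1}\colon Z\to 2^X$. As noted in the proof of Lemma~\ref{lem:Q}, the set $Z_c$ of such continuity points is a dense $G_\delta$ in $Z$. Fix any $w_0\in Z_c$. By minimality of $(Z,R)$, choose $m_k$ with $R^{m_k}w\to w_0$. A diagonal argument then produces $j_k$ large enough that, writing $N_k:=m_k+n_{j_k}$, we have $(z_{j_k},z'_{j_k})\to(z,z')$ in $Z^2_\sigma$ and $(R^{N_k}z_{j_k},R^{N_k}z'_{j_k})\to(w_0,w_0)$; this uses continuity of $R^{m_k}$ together with $R^{n_{j_k}}z_{j_k}\to w$, and similarly for the primed sequence.

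At the continuity point $w_0$, $\rho^{-1}$ is in particular lower semi-continuous: for any $u\in\rho^{-1}(w_0)$ and any sequence $b_k\to w_0$ in $Z$, there exist $y_k\in\rho^{-1}(b_k)$ with $y_k\to u$. Apply this separately to $b_k=R^{N_k}z_{j_k}$ and $b'_k=R^{N_k}z'_{j_k}$ to obtain $y_k,y'_k\in X$ with $\rho(y_k)=R^{N_k}z_{j_k}$, $\rho(y'_k)=R^{N_k}z'_{j_k}$, and $y_k\to u$, $y'_k\to u$. Set $x_k:=T^{-N_k}y_k$ and $x'_k:=T^{-N_k}y'_k$. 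Then $\rho(x_k)=z_{j_k}$ and $\rho(x'_k)=z'_{j_k}$, so $\pi(x_k)=\sigma(z_{j_k})=\sigma(z'_{j_k})=\pi(x'_k)$, placing $(x_k,x'_k)\in X^2_\pi$. Passing to a subsequence, $(x_k,x'_k)\to(x,x')\in X^2_\pi$ with $\rho(x)=z$, $\rho(x')=z'$, while $T^{N_k}(x_k,x'_k)=(y_k,y'_k)\to(u,u)$. This exhibits $(x,x')\in Q_\pi$ with $(\rho\times\rho)(x,x')=(z,z')$, completing the proof. The main obstacle, as indicated above, is exactly the failure of lower semi-continuity of $\rho^{-1}$ at an arbitrary point of $Z$, and the continuity-point/minimality trick is the mechanism that circumvents it.
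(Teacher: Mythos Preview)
Your proof is correct and follows essentially the same approach as the paper: both arguments move the diagonal witness $w$ to a continuity point of $\rho^{-1}$ via minimality, and then use continuity of the fiber map there to lift both coordinates to sequences converging to a common point of $X$. The only cosmetic difference is that you work with the definition of $Q$ in the form ``sequence $\to$ pair, iterates $\to$ diagonal'' and make the minimality/diagonalization step explicit, while the paper uses the equivalent reversed form and simply asserts that one may assume $w\in Z_c$.
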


\begin{proof}
Clearly $(\rho \times \rho)(Q_p) \subset Q_\sigma$. 
Suppose $(z,z') \in Q_\sigma$.
By the definition of $Q_\sigma$ there is a sequence 
$Z^2_\sigma \ni (z_i, z'_i) \to (w,w)$,
where $(w,w)$ is some point on the diagonal $\Delta_Z$, 
and a sequence 
$(n_i)$ with 
$(T^{n_i}z_i, T^{n_i}z'_i)$ tending to the point $(z,z')$.
Now the set-map $\rho^{-1} : Z \to 2^X$, the latter being the 
compact metric space of closed 
subsets of $X$ equipped with the
Hausdorff distance, is an upper-semi-continuous map.
Therefore 
it admits a dense $G_\delta$ subset
$Z_c \subset Z$ of continuity points. By the minimality of $Z$
we can assume that the point $w$ above is in $Z_c$.
Let $x_i \in X$ be such that $\rho(x_i) = z_i$ and we can assume, with no loss
in generality, that $x_i \to x$, for some point $x \in X$ with $\rho(x) = w$.

Since $w \in Z_c$ we have $\rho^{-1}(z'_i) \to \rho^{-1}(w)$
and therefore we can find a sequence $x'_i \in \rho^{-1}(z'_i)$ with
$x'_i \to x$. Thus $X^2_\pi \ni (x_i,x'_i) \to (x,x)$ and, again with no loss
in generality, we can assume that $(T^{n_i}x_i,T^{n_i}x'_i) \to (x,x')$
for some pair $(x,x') \in X^2_\pi$. But now $(x,x') \in Q_\pi$
and $(\rho \times \rho)(x,x') = (z,z')$.
Thus we conclude that indeed $(\rho \times \rho)(Q_\pi) = Q_\sigma$.
\end{proof}

\begin{theorem}
Let  $\pi : X \to Y$ be a factor map of minimal systems. Then 
$E_\pi$ is the least 
invariant closed equivalence relation
(icer) which contains the relation $Q_\pi$.
\end{theorem}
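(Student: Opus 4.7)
Let $R$ denote the least icer on $X$ containing $Q_\pi$. The plan is to show the two inclusions $R\subseteq E_\pi$ and $E_\pi\subseteq R$ separately, using Lemma~\ref{lem:QQ} as the main tool together with the classical characterization of equicontinuous extensions in terms of the relative regionally proximal relation.

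For the inclusion $R\subseteq E_\pi$, it suffices to check that $Q_\pi\subseteq E_\pi$, since $E_\pi$ is an icer. Let $\tau\colon X\to X/E_\pi$ be the quotient map and let $\sigma\colon X/E_\pi\to Y$ be the induced factor map, so that $\pi=\sigma\circ\tau$. By definition of $E_\pi$, the map $\sigma$ is an isometric (hence equicontinuous) extension, and for such extensions the relative regionally proximal relation is trivial, $Q_\sigma=\Delta_{X/E_\pi}$ (this is a classical fact, see e.g.\ \cite[Chap.~7]{A}). By Lemma~\ref{lem:QQ}, $(\tau\times\tau)(Q_\pi)=Q_\sigma=\Delta_{X/E_\pi}$, which translates to $Q_\pi\subseteq E_\pi$, as required.

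For the reverse inclusion $E_\pi\subseteq R$, set $Z=X/R$ and let $\rho\colon X\to Z$ and $\sigma\colon Z\to Y$ be the resulting factor maps, with $\pi=\sigma\circ\rho$. Since $R$ is an icer containing $Q_\pi$, we have $(\rho\times\rho)(Q_\pi)\subseteq(\rho\times\rho)(R)=\Delta_Z$. Applying Lemma~\ref{lem:QQ} we obtain
\[
Q_\sigma=(\rho\times\rho)(Q_\pi)\subseteq\Delta_Z,
\]
so $Q_\sigma=\Delta_Z$. Invoking the classical converse of the fact used above, namely that a factor map $\sigma\colon Z\to Y$ of minimal systems with $Q_\sigma=\Delta_Z$ is an equicontinuous (equivalently, isometric) extension, we conclude that $\sigma$ is an isometric extension. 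Therefore $R$ is an icer for which $X/R$ is an isometric extension of $Y$, and by the minimality of $E_\pi$ among such relations, $E_\pi\subseteq R$. Combining the two inclusions gives $R=E_\pi$.

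The main obstacle, or rather the main ingredient one must import, is the classical two-way correspondence between triviality of $Q_\sigma$ and $\sigma$ being an equicontinuous extension; everything else is a direct application of Lemma~\ref{lem:QQ} and the definitions.
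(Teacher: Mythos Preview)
Your proof is correct and follows essentially the same approach as the paper's. The only minor difference is that for the inclusion $R\subseteq E_\pi$ the paper simply invokes the already-noted fact $Q_\pi\subseteq E_\pi$ (stated as ``clearly'' right after Definition~\ref{def:QE}), whereas you rederive this via Lemma~\ref{lem:QQ}; the reverse inclusion $E_\pi\subseteq R$ is argued identically in both.
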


\begin{proof}
Let $E'_p$ denote the least 
icer which contains the relation $Q_\pi$.
Clearly then $E_\pi \supset {E'}_\pi$. 
Let $X \overset{\rho} \to Z' = X/{E'}_\pi \overset{\sigma}\to Y$ denote the natural
maps. By Lemma \ref{lem:QQ} we have $(\rho \times \rho)(Q_\pi) = Q_\sigma$.
But clearly $(\rho \times \rho)(Q_\pi) = \Delta_Z$ and it follows that
$Q_\sigma = \Delta_Z$. It is easy to check that the latter equality implies
that the extension $\sigma$ is an isometric (or equicontinuous) extension and we conclude that also $E_\pi \subset {E'}_\pi$.
\end{proof}

\section{A proof of Proposition~\ref{prop:algebra}}
\label{ap:algebra}

\noindent{\bf a)}
By 
the definition of the functor $\tor$,  
for every exact sequence
\begin{equation*}
\tag{\ref{eq:exactthree}}
0\to K\overset{j}{\rightarrow} L \overset{q}{\rightarrow}M\to 0
\end{equation*}
of abelian groups and  for every abelian group $G$, there exists an exact
sequence
$$
0\to \tor(M,G)\to K\otimes G\overset{j\otimes\id_G}{\rightarrow} 
L\otimes G
\overset{q\otimes\id_G}{\rightarrow} M\otimes G\to 0
$$
and this sequence depends on the sequence~\eqref{eq:exactthree} and on $G$ in a functorial
(covariant) way. Thus, in order to prove the first statement of the Proposition, we have only to construct, for every finite abelian
group $G$, an isomorphism
 $\Phi_G\colon\tor(M,G)\to\hom(\widehat G,M)$
depending on $G$ and $M$ in a functorial (covariant) way.

\medskip 
\noindent{\bf b)} 
 Let $G$ be a finite abelian group. In this part  we show that $\ext(G,\Z)\cong \widehat G$.  Let
\begin{equation}
\label{eq:resolution}
0\to A\overset{i}{\rightarrow}B\overset{p}{\rightarrow} G\to 0
\end{equation}
be a resolution of $G$, where $A$
and $B$ are finitely generated free abelian groups.

Let $i^*:\hom(B,\Z)\to\hom(A,\Z)$ be given by $i^*(f)=f\circ i$.
As $G$ is finite, $\hom(G,\Z)=0$ and $i^*$ is 
one-to-one. 
On the other hand, for every $f\in\hom(A,\Z)$ there exists a unique $\bar
f\in\hom(B,\Q)$ with $\bar f\circ i=f$.
Remark that $\bar f\in\hom(B,\Z)$ if and only if $f$ belongs to the range of
$i^*$. 
The map $f\mapsto\bar f$ is
a one-to-one homomorphism from $\hom(A,\Z)$ to $\hom(B,\Q)$.
Composing this homomorphism with the natural projection $\Q\to\Q/\Z$,
we get an homomorphism $\lambda:\hom(A,\Z)\to\hom(B,\Q/\Z)$. 

By construction, the kernel of $\lambda$ is the range of $i^*$. 
Moreover, the range of $\lambda$ is the  kernel of the  homomorphism 
$f\mapsto f\circ i$ from 
$\hom(B,\Q/\Z)$ to $\hom(A,\Q/\Z)$, thus this range can be identified with
$\hom(G,\Q/\Z)$. But, as $G$ is finite, every homomorphism $G\to\R/\Z$ 
takes its values in
$\Q/\Z$, and we have $\widehat G=\hom(G,\R/\Z)=\hom(G,\Q/\Z)$.

We get an exact sequence
\begin{equation}
\label{eq:three2}
0\to \hom(B,\Z)\overset{i^*}{\rightarrow}\hom(A,\Z)\to\widehat G\to 0
\end{equation}
depending on the exact sequence~\eqref{eq:resolution} and on $G$ in a functorial (contravariant) way. 
We conclude that indeed $\ext(G,\Z)=\widehat G$

\medskip
\noindent{\bf c)} In the next two parts we show~\eqref{eq:isomTor}, that is,  $\tor(M,G)\cong\hom(\ext(G,\Z),M)$. 
 Applying the functor 
  $\hom(\,\cdot\, ,M)$ to the sequence~\eqref{eq:three2}, we obtain an
exact sequence
$$
0\to \hom(\widehat G,M)\to \hom\bigl(\hom(A,\Z),M\bigr)\overset{\epsilon}{\rightarrow} 
\hom\bigl(\hom(B,\Z),M\bigr)
$$
depending on the exact sequence~\eqref{eq:resolution}, on $G$ and on $M$ in a functorial (covariant)
way. But $\hom(A,\Z)$ is a finitely generated free abelian group and
$\hom\bigl(\hom(A,\Z),\Z\bigr)$ can be identified with $A$. Moreover, 
 $\hom\bigl(\hom(A,\Z),M\bigr)$ can be identified with
$\hom\bigl(\hom(A,\Z),\Z\bigr)\otimes M$, thus with $A\otimes M$. In the
same way, we can identify  $\hom\bigl(\hom(B,\Z),M\bigr)$ with
$B\otimes M$. The map corresponding to the homomorphism $\epsilon$
above is simply 
$i\otimes\id_M$, and we get an exact sequence
\begin{equation}
\label{eq:tensor}
0\to \hom(\widehat G,M)\to A\otimes M\overset{i\otimes\id_M}{\rightarrow}
B\otimes M
\end{equation}
depending on the exact sequence~\eqref{eq:resolution}, on $G$ and on $M$ in a functorial (covariant)
way.

\medskip\noindent{\bf d)}
By the definition of the functor $\tor$, the exact sequence~\eqref{eq:tensor} induces
an  isomorphism 
\begin{equation}
\label{eq:isomTor}
\tor(M,G)\overset{\approx}{\rightarrow}\hom(\widehat G,M)
\end{equation}
depending on the sequence~\eqref{eq:resolution}, on $G$ and on $M$ in a functorial covariant way. But
any two presentations of $G$ of the form ~\eqref{eq:resolution} are chain equivalent. Looking at the constructions above,
it  follows easily that the isomorphism~\eqref{eq:isomTor} does not depend on the
resolution~\eqref{eq:resolution}, but only on $G$ and $M$. The first statement of the Proposition is proved.

\medskip
\noindent{\bf e)}
 Let  $G'$ be a proper subgroup of $G$.    As $K$ and
$L$ are  torsion free, $K\otimes G'$ can be considered as a subgroup of
$K\otimes G$, and $L\otimes G'$ as a subgroup of $L\otimes G$,
 and  we have 
$$
\ker(j\otimes\id_{G'})\;=\;\ker(j\otimes\id_G)\cap (K\otimes G')\ .
$$
If $\alpha$ is in $\ker(j\otimes\id_G)$,
then $\Phi_G(\alpha):\widehat G\to L/j(K)$ factorizes through the quotient 
$\widehat{G'}$, thus is equal to $0$ on the non trivial  subgroup $\widehat{G/G'}$ of $\widehat G$, and is not
one-to-one.

Conversely, let $\alpha\in\ker(j\otimes\id_G)$ be such that
$\Phi_G(\alpha)$ is not 
one-to-one. The kernel $H$ of this homomorphism is
a non trivial subgroup of $\widehat G$, thus it can be viewed as 
$\widehat{G/G'}$ for
some proper subgroup $G'$ of $G$, and $\widehat G/H=\widehat{G'}$.
$\Phi_G(\alpha)$ factorizes through $\widehat{G'}$, giving rise to some
$g\in \hom(\widehat{G'},L/j(K))$. Then $g=\Phi_{G'}(\beta)$ for some
$\beta\in\ker(j\otimes\id_{G'})\subset K\otimes G'\subset K\otimes G$,
and, using the functorial property of $\Phi$, it is easy to check that
$\beta=\alpha$. 

\medskip

For any automorphism $\phi$ of $G$,
 for any $\alpha\in \ker(j\otimes\id_G)$ we have:

$$
\Phi_G(\alpha)\circ\widehat\phi=\phi_*\Bigl(\Phi_G(\alpha)\Bigr)=
\Phi_G\Bigl((\id_K\otimes\phi)(\alpha)\Bigr)$$

Let $\alpha,\beta$ in $\ker(j\otimes\id_G)$ be such that 
$\Phi_G(\alpha)$ and $\Phi_G(\beta)$ are both 
one-to-one. These
homomorphisms have the same range if and only if there exists an
automorphism  $\psi$ of $\widehat G$ with $\Phi_G(\beta)=\Phi_G(\alpha)\circ
\psi$; the automorphisms of $\widehat G$ correspond by duality to the
automorphisms of $G$. Thus this condition is equivalent to the existence of
an automorphism $\phi$ of $G$ with 
$\Phi_G(\beta)=\Phi_G\Bigl((\id_K\otimes \phi)(\alpha)\Bigr)$, that is,
$\beta=(\id_K\otimes\phi)(\alpha)$.
\qed

\section{Vanishing $H^2(G)$}\label{app:C}

We provide a proof for the following theorem, following a method   indicated
to us by Benjy Weiss.

\begin{theorem}
Let $G$ be a finite group. Then $H^2(G)$ is the dual group $\widehat{\gab}$ of the abelianized group $\gab:=G/G_2$ of $G$, where $G_2$ is the commutator subgroup of $G$. In particular,
$H^2(G)=0$ for every finite noncommutative simple group $G$.
\end{theorem}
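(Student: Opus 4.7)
The plan is to exploit the short exact sequence of trivial coefficient modules
$$0 \to \Z \to \Q \to \Q/\Z \to 0$$
and the associated long exact sequence in group cohomology. One first extends the paper's definition to arbitrary (trivially acted upon) abelian coefficient groups $A$ by letting $H^n(G,A)$ be the cohomology of the cochain complex $\CI(G^{\bullet+1},A)$ of $A$-valued functions on $G^{\bullet+1}$ invariant under the diagonal $G$-action, with the same differentials $d_n$. The relevant portion of the long exact sequence reads
$$H^1(G,\Q) \to H^1(G,\Q/\Z) \overset{\delta}{\to} H^2(G,\Z) \to H^2(G,\Q),$$
so if one can show $H^n(G,\Q)=0$ for every $n\ge 1$, the connecting map $\delta$ produces an isomorphism $H^2(G) = H^2(G,\Z) \cong H^1(G,\Q/\Z)$.

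The vanishing $H^n(G,\Q)=0$ for $n\ge 1$ is a direct adaptation of the averaging argument already carried out in the proof of Theorem~\ref{th:tosion_Isometric}: given a cocycle $f\in\ker(d_{n+1})\subset\CI(G^{n+1},\Q)$, that argument, read verbatim with rational coefficients, produces $q\in\CI(G^{n+1},\Q)$ satisfying $d_{n+1}q = |G|\cdot f$; since $|G|$ is invertible in $\Q$, the element $q/|G|$ is a bona fide primitive for $f$. For the final identification I would compute $H^1(G,\Q/\Z)$ directly from the definition: an element $f\in\CI(G^2,\Q/\Z)$ is determined by $\tilde f(g)=f(e,g)$ via diagonal invariance, the cocycle equation $d_2f=0$ translates into $\tilde f$ being a group homomorphism $G\to\Q/\Z$, and the coboundary subspace vanishes because $\Q/\Z$ carries the trivial action. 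Hence $H^1(G,\Q/\Z)=\hom(G,\Q/\Z)=\hom(\gab,\Q/\Z)=\widehat{\gab}$, the last equality because $\gab$ is finite and so every character takes values in $\Q/\Z\subset\R/\Z$.

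The main bookkeeping obstacle will be to confirm that the paper's indexing conventions — where the differentials act on $(n+1)$-tuples and $d_1$ already vanishes — line up with the standard homogeneous bar resolution, so that the connecting map $\delta$ indeed lands in degree $2$ in the sense used by the paper. This is a routine check once one invokes the identification $\CI(G^{n+1},A)\cong\hom_G(\overline{B}_n,A)$ already recorded in the proof of Theorem~\ref{th:groupe}; the rest is pure homological algebra. The corollary for finite non-abelian simple groups is then immediate, since $G_2=G$ forces $\gab$ to be trivial and therefore $\widehat{\gab}=0$.
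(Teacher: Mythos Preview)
Your argument is correct (modulo a harmless index slip: the averaged element $q$ lives in $\CI(G^{n},\Q)$ with $d_{n}q=|G|\cdot f$, not in $\CI(G^{n+1},\Q)$), and it reaches the same conclusion by a genuinely different route than the paper. The paper works entirely with integer coefficients: starting from $\phi\in\ker(d_3)\subset\CI(G^3)$ it passes to the inhomogeneous $2$-cocycle $f(x,y)=\phi(1,x,xy)$, sums the cocycle identity over one variable to obtain $|G|\cdot f(x,y)=F(x)+F(y)-F(xy)$ with $F(x)=\sum_y f(x,y)$, and reads off a homomorphism $\tilde F\colon G\to\Z_{|G|}\hookrightarrow\T$; this is shown by hand to define a bijective map $H^2(G)\to\widehat{\gab}$. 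Your approach instead changes coefficients, invoking the long exact sequence attached to $0\to\Z\to\Q\to\Q/\Z\to 0$ together with the vanishing of $H^*(G,\Q)$ to get $H^2(G,\Z)\cong H^1(G,\Q/\Z)=\hom(G,\Q/\Z)$. The two are secretly the same map---the paper's $F/|G|\bmod\Z$ is exactly the image under the inverse of the connecting homomorphism $\delta$---but your packaging is more conceptual and immediately yields the degree-shift $H^{n+1}(G,\Z)\cong H^n(G,\Q/\Z)$ for all $n\ge 1$, whereas the paper's explicit computation stays self-contained and avoids setting up the long exact sequence.
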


\begin{proof}
We use the notation and results 
of Section~\ref{subsec:ext_finite_group}, and in particular the chain~\eqref{eq:CIG}. 

First we define a
one-to-one homomorphism from $H^2(G)$ into 
$\widehat{\gab}$.
Let
$\phi\in\CI(G^3)$. We define $f\in\CC(G^2)$ by
$$
f(x,y)=\phi(1,x,xy)\ .
$$
Then $\phi$ 
belongs to $\ker(d_3)$ if and only of $f$ satisfies
\begin{equation}
\label{eq:ker}
f(y,z)-f(xy,z)+f(x,yz)-f(x,y)=0\text{ for all }x,y,z\in G
\end{equation}
and $\phi$
belongs to $\range(d_2)$ if and only if there exists $h\in\CC(G)$ with
\begin{equation}
\label{eq:range}
f(x,y)=h(x)+h(y)-h(xy)\ .
\end{equation}
Assume that $f$ satisfies~\eqref{eq:ker}. We write $k=|G|$. Defining $F\in\CC(G)$ by
$$
F(x) = \sum_{y \in G} f(x,y)\text{ for all }x,y\in G\ .
$$
and summing over $z$ in~\eqref{eq:ker} we get
\begin{equation}
\label{eq:kf}
k.f(x,y)=F(x)+F(y)-F(xy)\text{ for all }x,y\in G\ .
\end{equation}
Let $\tilde F$ be the composition of $F$ with the projection $\Z \to \Z_k$.
 We have
\begin{equation}
\label{eq:tildef}
\tilde F(xy)=\tilde F(x)+\tilde F(y)\text{ for all }x,y\in G\ .
\end{equation}
Let us consider $\Z_k$ as embedded in the torus $\T=\R/\Z$ in the natural way. Then the last formula means that $\tilde F$ is a group homomorphism from $G$ to $\T$. This homomorphism is trivial on the commutator subgroup $G_2$ of $G$ and this induces an homomorphism $j(f)\colon \gab\to\T$ that is, an element $j(f)$ of $\widehat{\gab}$.

The homomorphism $j(f)$ is trivial if and only if $\tilde F$ is identically zero, that is, if there exists a function $h\in\CC(G)$ with $F=k.h$. Putting this into~\eqref{eq:kf}, we get~\eqref{eq:range}.
It follows that the map $j$ induces
a one-to-one homomorphism, written $j$ also, from
$H^2(G):=\ker(d_3)/\range(d_2)$ into $\widehat{\gab}$.

It remains to prove that $j$ is onto. Let $\chi\in\widehat{\gab}$. $\chi$ 
be a homomorphism from $\gab$ to $\T$ and we lift it as  
a homomorphism, written $\chi$ also, from $G$ to $\T$. Since $G$ is finite, the range of $\chi$ is included in some finite subgroup of $\T$ and thus in $\Z_k$ for some $k\geq 1$. Therefore, $\chi$ is the composition of a homomorphism $\tilde F\colon G \to \Z_k$ with the natural inclusion of $\Z_k$ in $\T$, and $\tilde F$ satisfies~\eqref{eq:tildef}. 

Let $F\colon G\to\Z$ be a lift of $\tilde F$. Then there exists $f\in\CC(G^2)$ such that~\eqref{eq:kf} holds and it is immediate to check that~\eqref{eq:ker} holds. We define $\phi\in\CC(G^3)$ by
$$
\phi(u,v,w)=f(u^{-1}v,v^{-1}w)\ .
$$
Then $\phi$ belongs to $\CI(G^3)$, and in fact to $\ker(d_3)$ because~\eqref{eq:ker} holds. Following the preceding construction, we check that $j(f)=\chi$. This shows that the homomorphism $j$ is onto.
\end{proof}

\end{document}